\theoremstyle{plain}
\newtheorem{theorem}{Theorem}[section]
\newtheorem{proposition}[theorem]{Proposition}
\newtheorem{corollary}[theorem]{Corollary}
\newtheorem{lemma}[theorem]{Lemma}
\theoremstyle{definition}
\newtheorem{remark}[theorem]{Remark}
\newtheorem{question}[theorem]{Question}
\newtheorem{example}[theorem]{Example}
\newtheorem{definition}[theorem]{Definition}
\newcommand{\abs}[1]{\lvert#1\rvert}
\newcommand{\norm}[1]{\lVert#1\rVert}
\newcommand{\bigabs}[1]{\bigl\lvert#1\bigr\rvert}
\newcommand{\bignorm}[1]{\bigl\lVert#1\bigr\rVert}
\newcommand{\Bigabs}[1]{\Bigl\lvert#1\Bigr\rvert}
\newcommand{\biggabs}[1]{\biggl\lvert#1\biggr\rvert}
\newcommand{\Bignorm}[1]{\Bigl\lVert#1\Bigr\rVert}
\newcommand{\biggnorm}[1]{\biggl\lVert#1\biggr\rVert}
\newcommand{\term}[1]{{\textit{\textbf{#1}}}}   
\renewcommand{\mid}{\::\:}
\newcommand{\goeso}{\xrightarrow{\mathrm{o}}}	
\newcommand{\goesso}{\xrightarrow{\sigma\mathrm{o}}}	
\newcommand{\goesu}{\xrightarrow{\mathrm{u}}} 
\newcommand{\goesuo}{\xrightarrow{\mathrm{uo}}}	
\newcommand{\goesnorm}{\xrightarrow{\norm{\cdot}}} 
\newcommand{\s}{\sum_{k=1}^\infty}
\newcommand{\tos}{\prescript{{\rm o}}{}{\sum_{k=1}^\infty\,}}
\newcommand{\tsos}{\prescript{{\sigma\rm o}}{}{\sum_{k=1}^\infty\,}}
\newcommand{\tus}{\prescript{{\rm u}}{}{\sum_{k=1}^\infty\,}}
\newcommand{\tuos}{\prescript{{\rm uo}}{}{\sum_{k=1}^\infty\,}}
\DeclareSymbolFont{bbold}{U}{bbold}{m}{n}
\DeclareSymbolFontAlphabet{\mathbbold}{bbold}
\def\one{\mathbbold{1}}
\DeclareMathOperator{\Range}{Range}
\DeclareMathOperator{\Span}{span}
\DeclareMathOperator{\supp}{supp}
\DeclareMathOperator*{\Ave}{Ave}
\renewcommand{\le}{\leqslant}
\renewcommand{\ge}{\geqslant}
\begin{document}

\title[Bibasic sequences]
{Bibasic sequences\\ in Banach lattices}

\author{M.A. Taylor}
\author{V.G. Troitsky}
\address{Department of Mathematics, University of California,
  Berkeley, CA, 94720, United States.}
\email{mitchelltaylor@berkeley.edu}
\address{Department of Mathematical and Statistical Sciences,
         University of Alberta, Edmonton, AB, T6G\,2G1, Canada.}
\email{troitsky@ualberta.ca}

\thanks{The second author was supported by an NSERC grant.}
\keywords{Banach lattice, basic sequence, order convergence}
\subjclass[2010]{Primary: 46B42. Secondary: 46B15, 46A40}

\date{\today}

\begin{abstract}
  Given a Schauder basic sequence $(x_k)$ in a Banach lattice, we say
  that $(x_k)$ is bibasic if the expansion of every vector in $[x_k]$
  converges not only in norm, but also in order. We prove that, in this
  definition, order convergence may be replaced with uniform
  convergence, with order boundedness of the partial sums, or with
  norm boundedness of finite suprema of the partial sums.

  The results in this paper extend and unify those from the pioneering
  paper \emph{Order Schauder bases in Banach lattices} by A.~Gumenchuk,
  O.~Karlova, and M.~Popov. In particular, we are able to characterize
  bibasic sequences in terms of the bibasis inequality, a result they
  obtained under certain additional assumptions.

  After establishing the aforementioned characterizations of bibasic
  sequences, we embark on a deeper study of their properties. We show,
  for example, that they are independent of ambient space, stable
  under small perturbations, and preserved under sequentially
  uniformly continuous norm isomorphic embeddings. After this we
  consider several special kinds of bibasic sequences, including
  permutable sequences, i.e., sequences for which every permutation
  is bibasic, and absolute sequences, i.e., sequences where
  expansions remain convergent after we replace every term with its
  modulus. We provide several equivalent characterizations of absolute
  sequences, showing how they relate to bibases and to further
  modifications of the basis inequality.

  We further consider bibasic sequences with unique order
  expansions. We show that this property does generally depend on
  ambient space, but not for the inclusion of $c_0$
  into~$\ell_\infty$. We also show that small perturbations of bibases
  with unique order expansions have unique order expansions, but this
  is not true if ``bibases'' is replaced with ``bibasic sequences''.

  In the final section, we consider uo-bibasic sequences, which are
  obtained by replacing order convergence with uo-convergence  in the
  definition of a bibasic sequence. We show that such sequences are
  very common.
\end{abstract}

\maketitle

\section{Preliminaries}

\subsection*{Schauder bases and decompositions.}
In this subsection, we collect notation and basic facts about Schauder
bases and decompositions. For details, we refer the reader to
\cite{Lindenstrauss:77,Singer:70,Singer:81}.  A sequence $(x_k)$ in a
Banach space $X$ is said to be a \term{(Schauder) basis} of $X$ if
every vector $x$ in $X$ admits a unique decomposition
$x=\s\alpha_k x_k$, where the series converges in norm. For each~$n$,
we define the $n$-th \term{basis projection} $P_n\colon X\to X$ via
\begin{math}
  P_n\bigl(\s\alpha_k x_k)=\sum_{k=1}^n\alpha_k x_k.
\end{math}
We define the $n$-th \term{coordinate functional} $x_n^*$ via
\begin{math}
  x^*_n\bigl(\s\alpha_k x_k)=\alpha_n.
\end{math}
It is known that the $P_n$'s are uniformly bounded; the number
$K=\sup_n\norm{P_n}$ is called the \term{basis constant} of~$(x_k)$. A
sequence $(x_k)$ in $X$ is called a \term{(Schauder) basic sequence}
if it is a basis for its closed linear span~$[x_k]$; in this case the
$P_n$'s and $x_n^*$'s are defined on~$[x_k]$. It is a standard fact
that a sequence $(x_k)$ of non-zero vectors in $X$ is basic iff there
exists $K\ge 1$ such that
\begin{equation}\label{basis-constant}
  \Bignorm{\sum_{k=1}^n\alpha_kx_k}\le K\Bignorm{\sum_{k=1}^m\alpha_kx_k}
\end{equation}
for every $n\le m$ and all scalars $\alpha_1,\dots,\alpha_m$; the least
value of the constant $K$ is the basis constant of~$(x_k)$.

More generally, suppose that $(X_k)$ is a sequence of closed non-zero
subspaces of a Banach space~$X$; let $[X_k]$ be the closed linear span
of $\bigcup_{k=1}^\infty X_k$. We say that $(X_k)$ is a
\term{(Schauder) decomposition} of $[X_k]$ if every $x$ in $[X_k]$
admits a unique expansion $x=\s x_k$, where $x_k\in X_k$ for each $k$
and the series converges in norm. As before, we define the
\term{canonical projections} $P_n\colon[X_k]\to[X_k]$ via
$P_nx=\sum_{k=1}^nx_k$. These projections are uniformly bounded;
moreover, a sequence $(X_k)$ of closed non-zero subspaces of $X$ is a
Schauder decomposition iff there exists a constant $K\ge 1$ such that
\begin{equation}
  \label{decomp-basis-constant}
    \Bignorm{\sum_{k=1}^nx_k}\le K\Bignorm{\sum_{k=1}^mx_k}
\end{equation}
whenever $n\le m$ and $x_k\in X_k$ for all $k=1,\dots,m$; see, e.g.,
Theorem~15.5 in \cite[p.~502]{Singer:81}. Clearly, every basic
sequence $(x_k)$ induces a Schauder decomposition with
$X_k=\Span x_k$. We refer the reader to \cite[\S 15]{Singer:81} or
\cite[1.g]{Lindenstrauss:77} for further information on Schauder
decompositions. Note that unlike \cite{Singer:81,Lindenstrauss:77}, we
do not assume that $[X_k]=X$. The reason is that in this paper $X$
will generally be a Banach lattice, but we will not require $[X_k]$ to
form a sublattice.

For our purposes, it is important to note that \eqref{basis-constant}
and \eqref{decomp-basis-constant} may be re-written as follows:
\begin{displaymath}
  \bigvee_{n=1}^m\Bignorm{\sum_{k=1}^n\alpha_kx_k}
  \le K\Bignorm{\sum_{k=1}^m\alpha_kx_k}
  \quad\mbox{and}\quad
  \bigvee_{n=1}^m\Bignorm{\sum_{k=1}^nx_k}
  \le K\Bignorm{\sum_{k=1}^mx_k}.
\end{displaymath}

\subsection*{Uniform and order convergence}
Let $X$ be an Archimedean vector lattice. A net $(x_\alpha)$
\term{converges uniformly} to~$x$, denoted $x_\alpha\goesu x$, if
there exists $e\in X_+$ such that for every $\varepsilon>0$ there
exists $\alpha_0$ such that $\abs{x_\alpha-x}\le\varepsilon e$
whenever $\alpha\ge\alpha_0$. We say that $(x_\alpha)$ \term{converges
  in order} to $x$ and write $x_\alpha\goeso x$ if there exists a net
$(u_\gamma)$ (which may have a different index set) such that
$u_\gamma\downarrow 0$ and for every $\gamma$ there exists $\alpha_0$
such that $\abs{x_\alpha-x}\le u_\gamma$ whenever
$\alpha\ge\alpha_0$. A sequence $(x_n)$ is said to
\term{$\sigma$-order converge} to~$x$, written $x_n\goesso x$, if
there exists a sequence $(u_n)$ such that $u_n\downarrow 0$ and
$\abs{x_n-x}\le u_n$ for every~$n$. In some of the literature,
$\sigma$-order convergence is called ``order convergence for
sequences''. It is easy to see that
\begin{displaymath}
  x_n\goesu x\quad\Rightarrow\quad
  x_n\goesso x\quad\Rightarrow\quad
  x_n\goeso x.
\end{displaymath}
Although order convergence and $\sigma$-order convergence disagree in
general, they agree for sequences in $\sigma$-order complete vector
lattices.  Clearly, uniform convergence implies order convergence; in
Banach lattices, uniform convergence implies norm convergence.

\begin{lemma}\label{nconv-subseq}
  Let $(x_k)$ be a sequence in a Banach lattice $X$ such that the
  series $\s\norm{x_k}$ converges. Then $x_k\goesu 0$ and the series
  $\s x_k$ converges both in norm and uniformly. In
  particular, every norm convergent sequence in $X$ has a subsequence
  which converges uniformly and, therefore, in order.
\end{lemma}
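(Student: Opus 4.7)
The plan is to use the classical trick of finding a sequence of scalars $c_k \uparrow \infty$ with $\sum c_k \norm{x_k}$ still convergent, which is possible by grouping: choose indices $n_1 < n_2 < \dots$ so that $\sum_{k=n_j}^{n_{j+1}-1}\norm{x_k}<2^{-j}$ and set $c_k=j$ for $n_j\le k<n_{j+1}$. With this choice, the series $\sum c_k\abs{x_k}$ is absolutely norm convergent (since $\bignorm{c_k\abs{x_k}}=c_k\norm{x_k}$), so the Banach lattice completeness yields $e:=\sum_{k=1}^{\infty}c_k\abs{x_k}\in X_+$. Since $(c_k)$ is non-decreasing and the partial sums of non-negative terms are increasing (in order) and also norm convergent to $e$, each partial sum is dominated by $e$, hence $c_k\abs{x_k}\le e$, i.e., $\abs{x_k}\le c_k^{-1}e$. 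As $c_k\to\infty$, this is precisely $x_k\goesu 0$.

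For the series, let $s_n=\sum_{k=1}^n x_k$ and let $s$ be its norm limit (which exists since $\sum\norm{x_k}<\infty$). For $m\ge n$ the triangle inequality gives $\abs{s_m-s_n}\le\sum_{k=n+1}^{m}\abs{x_k}\le c_{n+1}^{-1}\sum_{k=n+1}^{m}c_k\abs{x_k}\le c_{n+1}^{-1}e$. Passing $m\to\infty$ and using norm-to-lattice continuity, we conclude $\abs{s-s_n}\le c_{n+1}^{-1}e$, so $s_n\goesu s$; in particular the series converges uniformly (with the same $e$ witnessing both statements).

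For the final assertion, suppose $(y_n)$ converges to $y$ in norm. Choose a subsequence $(y_{n_k})$ with $\norm{y_{n_k}-y}<2^{-k}$, and set $x_1=y_{n_1}-y$ and $x_k=y_{n_k}-y_{n_{k-1}}$ for $k\ge 2$. Then $\sum\norm{x_k}<\infty$ and $\sum_{k=1}^{m}x_k=y_{n_m}-y$, so the first part of the lemma yields $y_{n_m}\goesu y$, which in turn implies order convergence.

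The only mildly delicate point is justifying $c_k\abs{x_k}\le e$ and the tail bound $\abs{s-s_n}\le c_{n+1}^{-1}e$: one needs the elementary facts that in a Banach lattice the positive cone is norm-closed and the lattice operations are norm-continuous, so inequalities between partial sums pass to their norm limits. Everything else is routine.
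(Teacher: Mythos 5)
Your proof is correct and follows essentially the same route as the paper's: both arguments hinge on choosing scalars $c_k\uparrow\infty$ with $\sum_k c_k\norm{x_k}<\infty$, setting $e=\sum_k c_k\abs{x_k}$, and reading off $\abs{x_k}\le c_k^{-1}e$ and the tail bound $\bigabs{s-s_n}\le c_{n+1}^{-1}e$. Your write-up is a bit more explicit than the paper's (the concrete grouping construction of $(c_k)$, the limit-passage via norm-closedness of the cone, and the telescoping argument for the final assertion), but there is no substantive difference.
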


\begin{proof}
  Find a sequence $(\lambda_k)$ such that
  $1\le\lambda_k\uparrow\infty$ and
  $\s\lambda_k\norm{x_k}<\infty$. Put $u=\s\lambda_k\abs{x_k}$. Then
  $\abs{x_k}\le\frac{1}{\lambda_k}u$ for every~$k$, hence
  $x_k\goesu 0$. Clearly, the series $\s x_k$ converges in norm; let
  $x$ be the sum. Then
  \begin{displaymath}
    \Bigabs{x-\sum_{k=1}^nx_k}\le\sum_{k=n+1}^\infty\abs{x_k}
    \le\frac{1}{\lambda_n}\sum_{k=n+1}^\infty\lambda_k\abs{x_k}
    \le\frac{1}{\lambda_n}u,
  \end{displaymath}
  so that the series converges uniformly.
\end{proof}


A Banach lattice $X$ is said to be \term{order continuous} if
$x_\alpha\goeso 0$ implies $x_\alpha\goesnorm 0$ for every net
$(x_\alpha)$ in~$X$. It follows from the Meyer-Nieberg Theorem that a
Banach lattice is order continuous iff $x_n\goeso 0$ implies
$x_n\goesnorm 0$ for every sequence $(x_n)$ in~$X$. We say that $X$ is
\term{$\sigma$-order continuous} if $x_n\goesso 0$ implies
$x_n\goesnorm 0$. It can be easily seen that a Banach lattice is order
continuous iff uniform convergence agrees with order convergence on
nets iff uniform convergence agrees with order convergence on
sequences; a Banach lattice is $\sigma$-order continuous iff uniform
convergence agrees with $\sigma$-order convergence on sequences; see,
e.g.~\cite{Bedingfield:80}.
  
We next provide two standard examples to illustrate the varied
relationships between uniform, norm, and order convergence.

\begin{example}
  Let $X=L_p(\mu)$ where $\mu$ is a measure and $1\le p<\infty$. Then
  $X$ is order continuous and a sequence $(f_k)$ converges in order to
  $f$ iff $(f_k)$ is order bounded and $(f_k)$ converges to $f$ almost
  everywhere (a.e.).
\end{example}

\begin{example}\label{non-o-conv}
  Let $X=C[0,1]$. It is easy to see that uniform convergence agrees
  with norm convergence. For each $k\in\mathbb N$, let $f_k\in X$ be
  such that $f_k(0)=1$, $f_k$ is linear on $[0,\frac{1}{2^k}]$, and
  $f_k$ vanishes on $[\frac{1}{2^k},1]$. Then $f_k\downarrow 0$, hence
  $f_k\goeso 0$. However, $(f_k)$ does not converge to zero in norm.
\end{example}

We will write $\s x_k$, $\tos x_k$, $\tsos x_k$, and $\tus x_k$ for
the norm, order, $\sigma$-order, and uniformly convergent series,
respectively. It follows from the last part of
Lemma~\ref{nconv-subseq} that if both $\s x_k$ and $\tos x_k$ converge
then they have the same sum. Replacing norm convergence in the
definition of a Schauder basis with uniform, order, or $\sigma$-order
convergence, one obtains the concepts of a uniform, order, and
$\sigma$-order basis, respectively, in a vector lattice.  Note that
the concepts of an order and a $\sigma$-order basis agree in
$\sigma$-order complete vector lattices; the concepts of a
$\sigma$-order and a uniform basis agree in $\sigma$-order continuous
Banach lattices. Although such bases will not be the focus of this
paper, we provide some simple examples that will be used later on.

\begin{example}
  Let $X=\ell_p$ with $1\le p<\infty$ or $X=c_0$. The standard unit
  vector sequence $(e_k)$ is a Schauder basis, an order basis, and a
  uniform basis. Note that $(e_k)$ is an order basis in~$\ell_\infty$,
  though it is neither a Schauder basis nor a uniform basis.
\end{example}

\begin{example}\label{C01-Sch}
  Let $X=C[0,1]$ and consider the Schauder system $(x_k)$ in $C[0,1]$
  as described in, e.g., \cite[p.~3]{Lindenstrauss:77}.  Since uniform
  and norm convergence agree in $C[0,1]$, $(x_k)$ is a uniform
  basis. However, it is not an order basis. Indeed, it can be easily
  verified that there is a sequence of coefficients $(\alpha_k)$ such
  that the sequence $(f_k)$ in Example~\ref{non-o-conv} is a tail of
  the sequence of partial sums for the series $\s\alpha_kx_k$. It
  follows that this series converges in order to zero. Hence, zero has
  non-unique order expansions.
\end{example}

\begin{example}\label{dual-summ}
  Let $X=\ell_1$, put $x_1=e_1$ and $x_k=-e_{k-1}+e_k$ when $k>1$. It
  is easy to see that $(x_k)$ is a Schauder basis of~$\ell_1$. We
  claim that it is neither a uniform basis nor an order basis.
  Consider the series $x=\sum_{k=1}^\infty\frac{x_k}{k}$. This series
  converges in norm, but its partial sums are not order bounded, hence
  it fails to converge uniformly or in order. It follows that $(x_k)$
  is neither an order basis nor a uniform basis because otherwise the
  uniform and the order expansion of $x$ would have to agree with its
  norm expansion.
\end{example}

\subsection*{Martingale inequalities}

We recall two classical martingale inequalities. Let $(f_k)$ be a
martingale in $L_1(P)$ for some probability measure~$P$; let $(d_k)$
be its difference sequence, i.e., $f_n=\sum_{k=1}^nd_k$ for
every~$n$. Put $f^*=\sup_k\abs{f_k}$ and
$S(f)=\bigl(\sum_{k=1}^\infty d_k^2\bigr)^\frac12$; these functions
are computed pointwise and are called the maximal and the square
function of~$(f_k)$, respectively. Let $1\le p<\infty$.  It is easy to
see that $\norm{f_k}_{L_p}\le\norm{f_{k+1}}_{L_p}$.  If $(f_k)$ is
norm bounded in $L_p(P)$ for some $1<p<\infty$, then Doob's inequality
asserts that
\begin{equation}\label{Doob}
  \norm{f^*}_{L_p}\le q\sup_k\norm{f_k}_{L_p}
\end{equation}
where $q=p^*$; see, e.g., Theorem~26.3 in~\cite{Jacod:03}.
Furthermore, Burkholder-Gundy-Davis inequality asserts that for every
$1\le p<\infty$,
\begin{equation}\label{BGD}
  C_1\norm{S(f)}_{L_p}\le\norm{f^*}_{L_p}\le C_2\norm{S(f)}_{L_p},
\end{equation}
where $C_1$ and $C_2$ depend only on~$p$; see, e.g., \cite{Davis:70}.

\section{The bibasis theorem}

The present paper will center around the following theorem:

\begin{theorem}\label{bib}
  Let $(x_k)$ be a Schauder basic sequence in a Banach
  lattice~$X$. TFAE:
  \begin{enumerate}
  \item\label{bib-uconv} $x=\s\alpha_kx_k$ implies
    $x=\tus\alpha_kx_k$ for every sequence~$(\alpha_k)$;
  \item\label{bib-soconv} $x=\s\alpha_kx_k$ implies
    $x=\tsos\alpha_kx_k$ for every sequence~$(\alpha_k)$;
  \item\label{bib-oconv} $x=\s\alpha_kx_k$ implies
    $x=\tos\alpha_kx_k$ for every sequence~$(\alpha_k)$;
  \item\label{bib-obdd} If $\sum\limits_{k=1}^\infty\alpha_kx_k$
    converges then its partial sums $\sum\limits_{k=1}^n\alpha_kx_k$
    are order bounded;
  \item\label{bib-nbdd} If $\sum\limits_{k=1}^\infty\alpha_kx_k$
    converges then the sequence
    \begin{math}
     \Bigl(\bigvee\limits_{n=1}^m\Bigabs{\sum\limits_{k=1}^n\alpha_kx_k}
     \Bigr)_{m=1}^\infty
   \end{math}
    is norm bounded;
  \item\label{bib-ineq}
    \begin{math}
      \exists M\ge 1\quad\forall m\in\mathbb
      N\quad\forall\alpha_1,\dots,\alpha_m\in\mathbb R
    \end{math}
    \begin{displaymath}
      \biggnorm{\bigvee\limits_{n=1}^m\Bigabs{\sum\limits_{k=1}^n\alpha_kx_k}}
      \le M\Bignorm{\sum_{k=1}^m\alpha_kx_k}.
    \end{displaymath}
  \end{enumerate}
\end{theorem}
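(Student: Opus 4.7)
The plan is to establish the cyclic chain of implications
\[
\text{(i)} \Rightarrow \text{(ii)} \Rightarrow \text{(iii)} \Rightarrow \text{(iv)} \Rightarrow \text{(v)} \Rightarrow \text{(vi)} \Rightarrow \text{(i)}.
\]
The implications (i)$\Rightarrow$(ii)$\Rightarrow$(iii) are immediate from the general chain $\goesu \Rightarrow \goesso \Rightarrow \goeso$ recorded in the preliminaries. For (iii)$\Rightarrow$(iv), recall that any order convergent sequence is order bounded (an eventual bound $u_\gamma$ for some $\gamma$ can be combined with the finitely many initial terms). For (iv)$\Rightarrow$(v), if $|P_n x| \le u$ for all $n$, then $\bigvee_{n=1}^m \abs{P_n x} \le u$ and hence has norm at most $\norm{u}$.

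The crucial analytic step is (v)$\Rightarrow$(vi), which I would handle by a uniform boundedness argument on $[x_k]$. For each $m$, define a seminorm
\[
r_m(x) = \biggnorm{\bigvee_{n=1}^m \abs{P_n x}}
\]
on $[x_k]$. Each $r_m$ is continuous: the triangle inequality and uniform boundedness of the basis projections yield $r_m(x) \le \sum_{n=1}^m \norm{P_n x} \le mK\norm{x}$. Hypothesis (v), applied to the expansion $x = \s x^*_k(x) x_k$ of an arbitrary $x \in [x_k]$, asserts $\sup_m r_m(x) < \infty$ for every $x$. The pointwise supremum of continuous seminorms is lower semicontinuous, so Baire category produces a constant $M$ with $r_m \le M \norm{\cdot}$ for all $m$. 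Specializing to $x = \sum_{k=1}^m \alpha_k x_k$ yields the bibasis inequality (vi).

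The remaining implication (vi)$\Rightarrow$(i) marries the inequality with Lemma~\ref{nconv-subseq}. Assuming $x = \s \alpha_k x_k$ in norm, write $s_n = \sum_{k=1}^n \alpha_k x_k$ and choose indices $n_j \uparrow \infty$ growing fast enough that $\sum_j \norm{s_{n_{j+1}} - s_{n_j}} < \infty$; Lemma~\ref{nconv-subseq} provides $e \in X_+$ and $\varepsilon_j \downarrow 0$ with $\abs{s_{n_j} - x} \le \varepsilon_j e$. Applying (vi) to the coefficient sequence obtained from $(\alpha_k)$ by zeroing the entries with index $\le n_j$ gives
\[
\norm{v_j} \le M\norm{s_{n_{j+1}} - s_{n_j}}, \qquad v_j := \bigvee_{n_j < n \le n_{j+1}} \abs{s_n - s_{n_j}},
\]
so $\sum_j \norm{v_j} < \infty$ and a second application of Lemma~\ref{nconv-subseq} yields $v_j \goesu 0$. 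For $n_j < n \le n_{j+1}$ one has $\abs{s_n - x} \le v_j + \varepsilon_j e$, and the right-hand side tends to zero uniformly, whence $s_n \goesu x$.

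The main obstacle is the uniform boundedness step (v)$\Rightarrow$(vi): the operators $x \mapsto \bigvee_{n=1}^m \abs{P_n x}$ are only sublinear, so the classical Banach--Steinhaus theorem does not apply directly, and one must invoke its form for continuous seminorms via Baire category. After that, the bridge (vi)$\Rightarrow$(i) is delicate chiefly because it requires interpolating the uniform convergence of a rapidly chosen subsequence to the full sequence of partial sums; the bibasis inequality is precisely what is needed to control the oscillation between consecutive indices $n_j$ and $n_{j+1}$.
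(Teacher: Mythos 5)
Your proposal is correct and follows essentially the same route as the paper: the easy chain (i)$\Rightarrow\dots\Rightarrow$(v), a Baire category / uniform boundedness argument on the sublinear functionals $x\mapsto\bignorm{\bigvee_{n=1}^m\abs{P_nx}}$ for (v)$\Rightarrow$(vi), and for (vi)$\Rightarrow$(i) a rapidly convergent subsequence of partial sums combined with the bibasis inequality to control the oscillation $v_j$ between consecutive indices, exactly as in the paper's proof of Theorem~\ref{decomp}. The only cosmetic differences are that the paper works at the level of Schauder decompositions and carries out the seminorm recentering by hand via the inequality $\varepsilon\abs{P_nx}\le\abs{P_nx_0}+\abs{P_n(x_0+\varepsilon x)}$, where you invoke the standard lower-semicontinuous-seminorm form of Banach--Steinhaus.
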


A basic sequence $(x_k)$ satisfying any (and, therefore, all) of the
equivalent conditions in the theorem will be referred to as a
\term{bibasic sequence}. If, in addition, $[x_k]=X$ we will call it a
\term{bibasis}. The condition in \eqref{bib-ineq} will be referred to
as the \term{bibasis inequality}, and the least value of $M$ for which
this inequality is satisfied will be called the \term{bibasis
  constant} of~$(x_k)$.  It is easy to see that
\eqref{bib-uconv}$\Rightarrow$%
\eqref{bib-soconv}$\Rightarrow$%
\eqref{bib-oconv}$\Rightarrow$%
\eqref{bib-obdd}$\Rightarrow$%
\eqref{bib-nbdd}.
Instead of proving the rest of the theorem
directly, we will deduce it as an immediate corollary of a more
general fact in Section~\ref{sec:bidec}. Before that, we present a few
remarks and corollaries.

\begin{remark}\label{bib-bib-ineq}
  In the theorem, we assume that $(x_k)$ is a Schauder basic
  sequence. However, a sequence of non-zero vectors which satisfies
  the bibasis inequality also satisfies the basis
  inequality~\eqref{basis-constant}, hence is automatically a Schauder
  basic sequence. Thus, a sequence in $X\setminus\{0\}$ is bibasic iff
  it satisfies the bibasis inequality.
\end{remark}

We emphasize that only~$X$, not~$[x_k]$, is assumed to be a
lattice. It follows from~\eqref{bib-nbdd} that the concept of a
bibasic sequence does not depend on the ambient space $X$:

\begin{corollary}\label{ambient}
  Let $Y$ be a closed sublattice of a Banach lattice~$X$, and $(x_k)$
  a sequence in~$Y$. Then $(x_k)$ is bibasic in $Y$ iff it is bibasic
  in~$X$. 
\end{corollary}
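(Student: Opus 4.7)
The plan is to reduce everything to the bibasis inequality from Theorem~\ref{bib}\eqref{bib-ineq}. Its left-hand side and right-hand side involve only norms of finite linear combinations of $x_1,\dots,x_m$ together with finite lattice operations (absolute values and finite suprema) applied to such combinations.

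Since $Y$ is a closed sublattice of $X$, the norm on $Y$ is the restriction of the norm on $X$, and finite suprema and absolute values computed within $Y$ coincide with those computed in~$X$. In particular, for any scalars $\alpha_1,\dots,\alpha_m$, the two quantities
\[
  \Bignorm{\sum_{k=1}^m\alpha_kx_k}
  \quad\text{and}\quad
  \Bignorm{\bigvee_{n=1}^m\Bigabs{\sum_{k=1}^n\alpha_kx_k}}
\]
take identical values whether we view $x_1,\dots,x_m$ as vectors of $Y$ or of $X$. Hence $(x_k)$ satisfies the bibasis inequality in $Y$ with constant $M$ if and only if it satisfies it in $X$ with the same~$M$.

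Combining this with Remark~\ref{bib-bib-ineq}, which asserts that the bibasis inequality by itself forces $(x_k)$ to be Schauder basic, I can apply Theorem~\ref{bib}\eqref{bib-ineq} in whichever ambient space is convenient and conclude that $(x_k)$ is bibasic in $Y$ iff it is bibasic in~$X$.

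There is no substantial obstacle here: once one recognizes that the appropriate characterization of ``bibasic'' is the local, finitary condition~\eqref{bib-ineq}, preservation under passage to closed sublattices is automatic. The real content sits upstream, in the equivalence of~\eqref{bib-oconv} with~\eqref{bib-ineq} established in Theorem~\ref{bib}, which converts an a priori infinitary definition (involving order convergence of all expansions) into this finite inequality.
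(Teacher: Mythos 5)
Your argument is correct and is essentially the paper's: the paper deduces the corollary from condition~\eqref{bib-nbdd} of Theorem~\ref{bib}, while you use condition~\eqref{bib-ineq}, but both rest on the same observation (made explicit later in the paper) that these characterizations involve only the norm and finite lattice operations, which agree in a closed sublattice and its ambient space. Nothing further is needed.
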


The following is immediate.

\begin{corollary}\label{Sch-unif}
  $(x_k)$ is a bibasis iff it is both a Schauder basis and a uniform
  basis. If $X$ is $\sigma$-order continuous then $(x_k)$ is a bibasis
  iff it is both a Schauder basis and a $\sigma$-order basis.
\end{corollary}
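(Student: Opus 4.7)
The plan is to read Theorem~\ref{bib}\eqref{bib-uconv} as asserting precisely that every Schauder expansion is also a uniform expansion, and analogously Theorem~\ref{bib}\eqref{bib-soconv} as asserting that every Schauder expansion is a $\sigma$-order expansion. Combined with the definitions of uniform and $\sigma$-order bases, this reduces the corollary to checking uniqueness of expansions. Uniqueness for the uniform case will follow from the fact that uniform convergence implies norm convergence in any Banach lattice; uniqueness for the $\sigma$-order case will require the hypothesis of $\sigma$-order continuity, which ensures $\sigma$-order convergence of sequences implies norm convergence.

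For the first statement, suppose $(x_k)$ is a bibasis. It is a Schauder basis by definition, and Theorem~\ref{bib}\eqref{bib-uconv} immediately yields that every Schauder expansion $x=\s\alpha_k x_k$ is also a uniform expansion $x=\tus\alpha_k x_k$, so every $x\in[x_k]=X$ admits a uniform expansion. Uniqueness of the coefficients follows by passing from uniform to norm convergence and invoking uniqueness of the Schauder expansion, so $(x_k)$ is a uniform basis. Conversely, if $(x_k)$ is both a Schauder basis and a uniform basis, then for each $x$ both the Schauder expansion $x=\s\alpha_k x_k$ and the uniform expansion $x=\tus\beta_k x_k$ converge in norm to $x$, forcing $\alpha_k=\beta_k$ for all $k$; this verifies condition \eqref{bib-uconv} of Theorem~\ref{bib}, so $(x_k)$ is a bibasis.

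The second statement follows from the same argument, with uniform convergence replaced by $\sigma$-order convergence throughout and Theorem~\ref{bib}\eqref{bib-uconv} replaced by \eqref{bib-soconv}. The only new ingredient is the use of $\sigma$-order continuity of $X$ to pass from the $\sigma$-order expansion $x=\tsos\beta_k x_k$ back to norm convergence, so that it may be compared with the Schauder expansion. I anticipate no real obstacle here; the corollary is essentially a direct reformulation of the relevant clauses of Theorem~\ref{bib} using that the stronger modes of convergence in question dominate norm convergence under the stated hypotheses.
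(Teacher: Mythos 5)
Your proof is correct and follows exactly the route the paper intends: the corollary is stated as immediate from Theorem~\ref{bib}\eqref{bib-uconv} and \eqref{bib-soconv}, with the only content being the identification of coefficients via the fact that uniform convergence (resp.\ $\sigma$-order convergence, under $\sigma$-order continuity) implies norm convergence. Nothing further is needed.
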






\begin{question}
  It is not known whether every uniform basis of a Banach lattice is
  automatically a Schauder basis; cf \cite[Problem 1.3]{Gumenchuk:15}.
\end{question}

\begin{example}\label{nonbib-l1}
  It is now somewhat easier to see that the sequence $(x_k)$ in
  Example~\ref{dual-summ} is not  a uniform
  basis. Indeed, taking $\alpha_k=1$ as $k=1,\dots,m$, it is clear
  that the bibasis inequality fails, hence $(x_k)$ is not a bibasis
  and, therefore, is not a uniform basis by
  \Cref{Sch-unif}.
\end{example}

\begin{remark}\label{AM-basic-bib}
  It follows from~\eqref{bib-nbdd} or~\eqref{bib-ineq} that every
  basic sequence in an AM-space is bibasic. In particular, the
  Schauder system of $C[0,1]$ in Example~\ref{C01-Sch} is a bibasis,
  even though it is not an order basis.
\end{remark}

\begin{question}
  Let $X$ be a Banach lattice and suppose that every basic sequence in
  $X$ is bibasic. Does this imply that $X$ is lattice isomorphic to an
  AM-space?
\end{question}

\begin{remark}
  The concepts of a bibasis and a bibasic sequence were originally
  introduced in \cite{Gumenchuk:15}. Formally speaking, the definition
  in \cite{Gumenchuk:15} is slightly different: they defined a bibasis
  as a sequence which is both a Schauder basis and a $\sigma$-order
  basis. For example, the Schauder system of $C[0,1]$ is a bibasis in
  our sense, but not in the sense of \cite{Gumenchuk:15}. However, in
  \cite{Gumenchuk:15} the authors only consider $\sigma$-order
  continuous spaces, and in this case the two definitions agree by
  Corollary~\ref{Sch-unif}, so that all the results of
  \cite{Gumenchuk:15} remain valid for our definition.

  In \cite{Gumenchuk:15}, it was proved that if $X$ is $\sigma$-order
  continuous then every bibasis satisfies the bibasis inequality,
  which corresponds to the implication
  \eqref{bib-soconv}$\Rightarrow$\eqref{bib-ineq} of
  Theorem~\ref{bib}. They also proved
  \eqref{bib-ineq}$\Rightarrow$\eqref{bib-soconv} under certain
  additional assumptions. Thus, our Theorem~\ref{bib} improves the
  results of \cite{Gumenchuk:15}: we make no assumptions on~$X$, we
  add conditions \eqref{bib-uconv}, \eqref{bib-oconv},
  \eqref{bib-obdd}, and \eqref{bib-nbdd}, and our proof is shorter.
\end{remark}

\begin{question}
  We do not know whether the definition of a bibasis in
  \cite{Gumenchuk:15} implies our definition in an arbitrary Banach
  lattice. Equivalently, if $(x_k)$ is a Schauder basis and a
  $\sigma$-order basis, do the coefficients in the norm and in the
  order expansions of the same vector agree?
\end{question}

\begin{example}\label{Haar}
  Let $X=L_p[0,1]$; let $(h_k)$ be the Haar system in $L_1[0,1]$ as
  described in \cite[p.~3]{Lindenstrauss:77}. In particular, $(h_k)$
  is a monotone Schauder basis in~$X$.  It was shown in
  \cite{Gumenchuk:15} that the Haar system $(h_k)$ fails to be a
  bibasis in $L_1[0,1]$. It was also shown there that $(h_k)$ is a
  bibasis when $1<p<\infty$, and its bibasis constant satisfies
  $M_p\ge\bigl(1+\frac{1}{2^p-2}\bigr)^\frac1p$. We present an
  alternative approach to this problem. Let $1<p<\infty$. Fix
  $\alpha_1,\dots,\alpha_m$, put $x_n=\sum_{k=1}^n\alpha_kh_k$ as
  $n=1,\dots,m$. It is easy to see that $(x_n)_{n=1}^m$ is a
  martingale. By Doob's Inequality~\eqref{Doob},
  \begin{displaymath}
    \Bignorm{\bigvee_{n=1}^m\abs{x_n}}_{L_p}
    \le q\norm{x_m}_{L_p}.
  \end{displaymath}
  This yields that $(h_k)$ satisfies the bibasis inequality with
  $M_p=q$. Furthermore, it was shown in~\cite[p.~15]{Burkholder:91}
  that the constant $q$ is sharp in Doob's inequality even for dyadic
  martingales. Since dyadic martingales are of the form
  $\bigl(\sum_{k=1}^n\alpha_kh_k\bigr)_{n=1}^\infty$, it follows that
  the bibasis constant of $(h_k)$ in $L_p[0,1]$ equals~$q$. This
  argument also shows that every martingale difference sequence in
  $L_p(P)$ with $1<p<\infty$ is bibasic.
\end{example}

We finish this section with a comment about ambient space. As observed
in Corollary~\ref{ambient}, the concept of a bibasic sequence does not
depend on the ambient space because, according to
parts~\eqref{bib-nbdd} and~\eqref{bib-ineq} of Theorem~\ref{bib},
bibasic sequences may be characterized in terms of the norm and
lattice operations only, and the latter do not depend on the ambient
space. Parts~\eqref{bib-soconv}, \eqref{bib-oconv},
and~\eqref{bib-obdd} characterize bibasic sequences in terms of order
convergence, $\sigma$-order convergence, and order boundedness; these
three concepts may depend on ambient space. For example, the standard
unit vector basis $(e_k)$ of $c_0$ is neither order convergent nor
even order bounded, yet it is order null when viewed as a sequence
in~$\ell_\infty$. So it is somewhat surprising that order convergence
and order boundedness of bibasic expansions in~\eqref{bib-soconv},
\eqref{bib-oconv}, and~\eqref{bib-obdd} do not depend on the ambient
space.  This leaves~\eqref{bib-uconv}: does uniform convergence depend
on the ambient space? It is easy to see that uniform convergence in a
sublattice implies uniform convergence in the entire space; however,
the converse is false in the category of vector lattices. For example,
the sequence $\frac{1}{k}e_k$ is uniformly null in~$\ell_\infty$, but
not in~$\ell_1$. Nevertheless, the following proposition shows that
uniform convergence does not depend on the ambient space in the
category of Banach lattices.

\begin{proposition}\label{uconv-stable}
  Let $Y$ be a closed sublattice of a Banach lattice $X$ and $(x_k)$ a
  sequence in~$Y$. Then $x_k\goesu 0$ in $Y$ iff $x_k\goesu 0$ in~$X$.
\end{proposition}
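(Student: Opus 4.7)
The forward direction is immediate: any dominator $e\in Y_+$ for uniform convergence in $Y$ also lies in $X_+$ and witnesses the same convergence in $X$. The substance lies in the converse, where closedness of $Y$ is essential, as the preceding discussion of $\frac{1}{k}e_k\in\ell_1\subset\ell_\infty$ already indicates.

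For the converse, I would first normalize the data. Given $x_k\goesu 0$ in $X$ witnessed by $e\in X_+$, define $\varepsilon_k=\inf\{\lambda>0:\abs{x_k}\le\lambda e\}$. Since $\{\lambda\ge 0:\lambda e-\abs{x_k}\in X_+\}$ is closed as a section of the closed cone $X_+$, the infimum is attained and $\abs{x_k}\le\varepsilon_k e$. Replacing $\varepsilon_k$ by $\sup_{j\ge k}\varepsilon_j$ (and discarding finitely many initial terms if needed), I may assume $\varepsilon_k\downarrow 0$.

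The key step is to group the indices geometrically. Let $k_n=\min\{k:\varepsilon_k\le 2^{-n}\}$, so the blocks $[k_n,k_{n+1})$ partition the tail and $\abs{x_k}\le 2^{-n}e$ on the $n$-th block. Put $z_n=\bigvee_{k_n\le k<k_{n+1}}\abs{x_k}$: this is a finite supremum and hence lies in $Y_+$, with $\norm{z_n}\le 2^{-n}\norm{e}$. Define $u=\sum_{n=1}^\infty 2^{n/2}z_n$; the estimate $\norm{2^{n/2}z_n}\le 2^{-n/2}\norm{e}$ is summable, so the series converges in norm, and closedness of $Y$ places $u$ in $Y_+$. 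For $k$ in the $n$-th block, $\abs{x_k}\le z_n\le 2^{-n/2}u$, and $n\to\infty$ as $k\to\infty$, giving $x_k\goesu 0$ in $Y$ with dominator $u$.

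The main obstacle is producing a dominator that actually lives in $Y$. A naive attempt $u=\sum_k\lambda_k\abs{x_k}$ with $\lambda_k\uparrow\infty$ would demand $\sum\lambda_k\norm{x_k}<\infty$, which can fail when $\norm{x_k}$ decays slowly, even though $\varepsilon_k\to 0$. The geometric blocking repairs this because the finite supremum $z_n$ absorbs however many indices a block contains while retaining the worst-case constant $2^{-n}$, converting the problem into summing a geometric series. Closedness of $Y$ enters precisely when passing to the norm-limit of the partial sums of $\sum 2^{n/2}z_n$.
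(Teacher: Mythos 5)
Your proof is correct and follows essentially the same route as the paper's: partition the tail of the sequence into finite blocks on which $\abs{x_k}\le \delta_n e$ with $\delta_n$ summable against growing weights, take the finite suprema $z_n$ (which lie in the sublattice), and form the norm-convergent weighted series $u=\sum_n\mu_n z_n\in Y$ as the new dominator. The paper uses thresholds $n^{-3}$ with weights $n$ where you use $2^{-n}$ with weights $2^{n/2}$; this is an immaterial difference.
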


\begin{proof}
  The forward implication is trivial. Suppose $x_k\goesu 0$
  in~$X$. Find $e\in X_+$ such that $(x_k)$ converges to zero
  uniformly relative to~$e$. WLOG, scaling everything, we may assume
  that $\norm{e}=1$. For every $n$ there exists $k_n$ such that
  $\abs{x_k}\le \frac{1}{n^3}e$ for all $k\ge k_n$. WLOG, $(k_n)$ is
  an increasing sequence. For every~$n$, put
  \begin{math}
    v_n=\bigvee_{k=k_n}^{k_{n+1}-1}\abs{x_k}.
  \end{math}
  Then
  $v_n\le\frac{1}{n^3}e$ and, therefore,
  $\norm{v_n}\le\frac{1}{n^3}$. It follows that the series
  $w:=\sum_{n=1}^\infty nv_n$ converges and $w\in Y$. It is left to
  show that $(x_k)$ converges to zero uniformly relative to~$w$. Let
  $n\in\mathbb N$. Take any $k\ge k_n$. Find $m\ge n$ such that
  $k_m\le k< k_{m+1}$. Then $\abs{x_k}\le
  v_m\le\frac{1}{m}w\le\frac{1}{n}w$. 
\end{proof}

The preceding proposition fails for nets. Consider the double sequence
$x_{n,m}=\frac1n e_m$ in~$\ell_\infty$. It follows from
$x_{n,m}\le\frac1n\one$ that $x_{n,m}\goesu 0$
in~$\ell_\infty$. However, viewed as a net in~$c_0$, its tails are not
order bounded, hence it fails to converge uniformly to zero.

It is also worth mentioning that the preceding proposition remains
valid for uniformly closed sublattices of uniformly complete vector
lattices, with essentially the same proof.

\section{Bidecompositions}
\label{sec:bidec}

From now on, when possible, we will work in the language of
decompositions. In particular, the results apply to basic
sequences. However, we find the language of decompositions more
natural and clear for our purposes.

\begin{theorem}\label{decomp}
  Let $X$ be a Banach lattice and $(X_k)\subseteq X$ a Schauder
  decomposition of~$[X_k]$. Let $P_n\colon[X_k]\to[X_k]$ be the $n$-th
  canonical projection. TFAE:
  \begin{enumerate}
  \item\label{decomp-uconv} For all $x\in[X_k]$, $P_nx\goesu x$;
  \item\label{decomp-soconv} For all $x\in[X_k]$, $P_nx\goesso x$;
  \item\label{decomp-oconv} For all $x\in[X_k]$, $P_nx\goeso x$;
  \item\label{decomp-obdd} For all $x\in[X_k]$, $(P_nx)$ is order
    bounded in~$X$;
  \item\label{decomp-nbdd} For all $x\in[X_k]$,
    \begin{math}
      \bigl(\bigvee_{n=1}^m\abs{P_nx}\bigr)_{m=1}^\infty
    \end{math}
    is norm bounded;
  \item\label{decomp-ineq} There exists $M\ge 1$ such that for any
    $m\in\mathbb N$ and any $x_1\in X_1$, \dots, $x_m\in X_m$ one has
    \begin{equation}\label{bidec-ineq}
      \Bignorm{\bigvee\limits_{n=1}^m\Bigabs{\sum\limits_{k=1}^nx_k}}
      \le M\Bignorm{\sum_{k=1}^mx_k}.
    \end{equation}
  \end{enumerate}
\end{theorem}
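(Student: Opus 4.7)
My plan is to close the cycle (i) $\Rightarrow$ (ii) $\Rightarrow$ (iii) $\Rightarrow$ (iv) $\Rightarrow$ (v) $\Rightarrow$ (vi) $\Rightarrow$ (i). The first four implications are essentially definitional: (i) $\Rightarrow$ (ii) $\Rightarrow$ (iii) is recorded in the preliminaries; (iii) $\Rightarrow$ (iv) because an order-convergent sequence is eventually bounded above by $\abs{x}+u_\gamma$ for a fixed $\gamma$, with the finitely many initial terms absorbed into any common upper bound; and (iv) $\Rightarrow$ (v) is trivial, since any order bound $u$ for $(P_nx)$ dominates every finite supremum $\bigvee_{n=1}^m\abs{P_nx}$.

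For (v) $\Rightarrow$ (vi), I would observe that each $\phi_m(x):=\bignorm{\bigvee_{n=1}^m\abs{P_nx}}$ is a continuous seminorm on~$[X_k]$, subadditivity coming from $\abs{P_n(x+y)}\le\abs{P_nx}+\abs{P_ny}$ followed by a coordinatewise estimate of the finite supremum. Hypothesis (v) says $\sup_m\phi_m(x)<\infty$ for every $x$, so the uniform boundedness principle for continuous seminorms furnishes $M\ge 1$ with $\sup_m\phi_m(x)\le M\norm{x}$. Specializing to $x=\sum_{k=1}^m x_k$ with $x_k\in X_k$ (so that $P_nx=\sum_{k=1}^n x_k$ for $n\le m$) gives~\eqref{bidec-ineq}.

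The heart of the argument is (vi) $\Rightarrow$ (i). Fix $x=\s x_k\in[X_k]$ with $x_k\in X_k$ and choose $n_j$ with $\norm{x-P_{n_j}x}\le 2^{-j}$. Applying \Cref{nconv-subseq} to $(x-P_{n_j}x)$ produces $e\in X_+$ and scalars $\delta_j\to 0$ with $\abs{x-P_{n_j}x}\le\delta_j e$. Set $v_j:=\bigvee_{n_j<n\le n_{j+1}}\bigabs{\sum_{k=n_j+1}^n x_k}$; applying (vi) with $m=n_{j+1}$ to the decomposition $(0,\dots,0,x_{n_j+1},\dots,x_{n_{j+1}})$ yields $\norm{v_j}\le M\norm{P_{n_{j+1}}x-P_{n_j}x}$, which is summable in~$j$. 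A second invocation of \Cref{nconv-subseq} applied to $(v_j)$ produces weights $\lambda_j\uparrow\infty$ and $w\in X_+$ with $v_j\le\lambda_j^{-1}w$. For $n_j\le n<n_{j+1}$, the triangle inequality then gives
\begin{displaymath}
\abs{x-P_nx}\le\abs{x-P_{n_j}x}+\biggabs{\sum_{k=n_j+1}^n x_k}\le\delta_j e+\lambda_j^{-1}w\le\max(\delta_j,\lambda_j^{-1})(e+w),
\end{displaymath}
and the coefficient tends to zero, so $P_nx\goesu x$. The main obstacle here, and the reason for the subsequence maneuver, is that (vi) directly controls only finite suprema of partial sums; producing a single dominating vector for the entire tail requires patching countably many of these controlled pieces together, which is exactly what \Cref{nconv-subseq} accomplishes.
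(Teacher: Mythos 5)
Your proposal is correct and follows essentially the same route as the paper's: the paper proves (v)$\Rightarrow$(vi) by running the Baire category argument by hand on the closed sets $F_i=\{x:\sup_m\bignorm{\bigvee_{n=1}^m\abs{P_nx}}\le i\}$, which is exactly the uniform boundedness principle for continuous seminorms that you invoke, and its proof of (vi)$\Rightarrow$(i) uses the identical subsequence-plus-intermediate-finite-suprema construction, patching the pieces together via \Cref{nconv-subseq}. No gaps.
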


\begin{proof}
  \eqref{decomp-uconv}$\Rightarrow$%
  \eqref{decomp-soconv}$\Rightarrow$%
  \eqref{decomp-oconv}$\Rightarrow$%
  \eqref{decomp-obdd}$\Rightarrow$%
  \eqref{decomp-nbdd} is straightforward.
  
  \eqref{decomp-nbdd}$\Rightarrow$\eqref{decomp-ineq}: For every
  $i\in\mathbb N$, put
  \begin{displaymath}
    F_i=\biggl\{x\in[X_k]\mid\sup\limits_m
    \Bignorm{\bigvee_{n=1}^m\abs{P_nx}}\le i\biggr\}
    =\bigcap_{m=1}^\infty\biggl\{x\in[X_k]\mid
    \Bignorm{\bigvee_{n=1}^m\abs{P_nx}}\le i\biggr\}.
  \end{displaymath}
  Continuity of the canonical projections and lattice operations
  yields that each $F_i$ is closed. It follows from
  \eqref{decomp-nbdd} that $\bigcup_{i=1}^\infty F_i=[X_k]$. By Baire
  Category theorem, there exists $i_0$ such that $F_{i_0}$ has
  non-empty interior relative to~$[X_k]$. That is, there exists
  $x_0\in F_{i_0}$ and $\varepsilon>0$ such that $x\in F_{i_0}$
  whenever $x\in[X_k]$ and $\norm{x-x_0}\le\varepsilon$. Let
  $x\in[X_k]$ with $\norm{x}\le 1$. For each $n\in\mathbb N$, the
  triangle inequality yields
  \begin{math}
    \varepsilon\abs{P_nx}\le\abs{P_nx_0}+\abs{P_n(x_0+\varepsilon x)}.
  \end{math}
  It follows that
  \begin{displaymath}
    \varepsilon\bigvee_{n=1}^m\abs{P_nx}\le
    \bigvee_{n=1}^m\abs{P_nx_0}+
    \bigvee_{n=1}^m\abs{P_n(x_0+\varepsilon x)}
  \end{displaymath}
  for each $m\in\mathbb N$, so that 
  \begin{math}
    \varepsilon\Bignorm{\bigvee_{n=1}^m\abs{P_nx}}\le 2i_0.
  \end{math}
  This yields
  \begin{displaymath}
    \Bignorm{\bigvee_{n=1}^m\abs{P_nx}}
    \le\frac{2i_0}{\varepsilon}\norm{x}
  \end{displaymath}
   for all $x\in[X_k]$ and $m\in\mathbb N$. Now given $m$ and
  $x_1\in X_1$, \dots, $x_m\in X_m$, define $x=\sum_{k=1}^mx_k$ to get
  \eqref{decomp-ineq} with $M=\frac{2i_0}{\varepsilon}$.

  \eqref{decomp-ineq}$\Rightarrow$\eqref{decomp-uconv}:
  Let $x\in[X_k]$ and let $(x_k)$ be the unique sequence such that
  $x_k\in X_k$ for every $k$ and $P_nx=\sum_{k=1}^nx_k\goesnorm
  x$. Then there is a subsequence $(P_{n_m}x)$ such that
  $P_{n_m}x\goesu x$. WLOG, passing to a further
  subsequence and using that $(P_nx)$ is norm Cauchy, we may assume
  that
  \begin{math}
    \bignorm{\sum_{k=n_m+1}^ix_k}<\frac{1}{2^m}
  \end{math}
  whenever $i>n_m$.

  For every $m\in\mathbb N$, define
  \begin{displaymath}
    u_m=\bigvee_{i=n_m+1}^{n_{m+1}}\Bigabs{\sum_{k=n_m+1}^ix_k}.
  \end{displaymath}
  Applying \eqref{decomp-ineq} to the sequence
  \begin{math}
    (0,\dots,0,x_{n_m+1},x_{n_m+2},\dots, x_{n_{m+1}})
  \end{math}
  with $n_m$ zeros at the beginning yields
  \begin{displaymath}
    \norm{u_m}\le M\Bignorm{\sum_{k=n_m+1}^{n_{m+1}}x_k}<\frac{M}{2^m}.
  \end{displaymath}
  Define $u=\sum_{m=1}^\infty mu_m$; it follows from
  $u_m\le\frac{u}{m}$ that $u_m\goesu 0$. 

  Therefore, $\abs{P_{n_m}x-x}+u_m\goesu 0$. It follows that there is
  a vector $e>0$ with the property that for any $\varepsilon>0$ there
  exists $m_0$ such that $\abs{P_{n_m}x-x}+u_m\le\varepsilon e$
  whenever $m\ge m_0$. Fix $\varepsilon>0$, and find the required~$m_0$. Let
  $i\in\mathbb N$ with $i>n_{m_0}$. Then we can find $m\ge m_0$ such
  that $n_m<i\le n_{m+1}$, so that
  \begin{multline*}
    \abs{P_ix-x}\le\abs{P_{n_m}x-x}+\abs{P_ix-P_{n_m}x}\\
    =\abs{P_{n_m}x-x}+\Bigabs{\sum_{k=n_m+1}^ix_k}
    \le\abs{P_{n_m}x-x}+u_m\le\varepsilon e.
  \end{multline*}
  This shows that $P_ix\goesu x$.
\end{proof}

A Schauder decomposition $(X_k)$ satisfying the equivalent conditions
of Theorem~\ref{decomp} will be referred to as a
\term{bidecomposition}, and the least value of $M$ in
\eqref{decomp-ineq} will be called the \term{bidecomposition constant}
of~$(X_k)$. Note that each $X_k$ (as well as $[X_k]$) is a closed
subspace of $X$ which need not be a sublattice.

As in Corollary~\ref{ambient}, it follows immediately from
\eqref{decomp-nbdd} that the definition of a bidecomposition does not
depend on ambient space. In particular, $(X_k)$ is a bidecomposition
in $X$ iff it is a bidecomposition in~$X^{**}$.

The following is an analogue of Remark~\ref{bib-bib-ineq}:

\begin{corollary}
  Let $(X_k)$ be a sequence of closed non-zero subspaces of a Banach
  lattice~$X$. Then $(X_k)$ is a bidecomposition iff it
  satisfies~\eqref{decomp-ineq}.
\end{corollary}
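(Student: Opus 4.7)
The forward direction is immediate from Theorem~\ref{decomp}, so the plan is to establish the converse: assuming that the sequence of closed non-zero subspaces $(X_k)$ satisfies the bidecomposition inequality \eqref{bidec-ineq}, we must show it is a bidecomposition. In view of Theorem~\ref{decomp}, which already contains the implication \eqref{decomp-ineq}$\Rightarrow$\eqref{decomp-uconv}, it will suffice to verify that $(X_k)$ is a Schauder decomposition in the first place; then Theorem~\ref{decomp} applies verbatim and hands us the conclusion.

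The first step is therefore to deduce the Schauder decomposition inequality \eqref{decomp-basis-constant} from \eqref{bidec-ineq}. The key observation is that the norm on $X$ is a lattice norm, so for any $n\le m$ and any $x_k\in X_k$,
\begin{displaymath}
  \Bignorm{\sum_{k=1}^n x_k}
  =\Bignorm{\Bigabs{\sum_{k=1}^n x_k}}
  \le\Bignorm{\bigvee_{j=1}^m \Bigabs{\sum_{k=1}^j x_k}}
  \le M\Bignorm{\sum_{k=1}^m x_k},
\end{displaymath}
where the first inequality applies monotonicity of the norm to the pointwise estimate $\bigabs{\sum_{k=1}^n x_k}\le\bigvee_{j=1}^m\bigabs{\sum_{k=1}^j x_k}$, and the second inequality is exactly \eqref{bidec-ineq}. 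This is \eqref{decomp-basis-constant} with $K\le M$, so by Theorem~15.5 of \cite{Singer:81} (cited in the preliminaries), $(X_k)$ is a Schauder decomposition of $[X_k]$.

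With this in place, Theorem~\ref{decomp} applies to $(X_k)$, and our hypothesis is precisely its condition~\eqref{decomp-ineq}; hence $(X_k)$ satisfies all the equivalent conditions of Theorem~\ref{decomp} and is therefore a bidecomposition. There is no substantive obstacle here: the entire content of the corollary is the observation that the bidecomposition inequality is strong enough to force the underlying Schauder decomposition structure, after which the bidecomposition conclusion is a direct invocation of the main theorem of the section.
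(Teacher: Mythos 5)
Your proof is correct and matches the paper's intended argument: the paper gives no explicit proof but presents the corollary as the analogue of Remark~\ref{bib-bib-ineq}, whose content is exactly your observation that $\bignorm{\sum_{k=1}^n x_k}\le\bignorm{\bigvee_{j=1}^m\bigabs{\sum_{k=1}^j x_k}}\le M\bignorm{\sum_{k=1}^m x_k}$ recovers the decomposition inequality \eqref{decomp-basis-constant}, after which Theorem~\ref{decomp} applies.
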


Clearly, every bibasic sequence $(x_k)$ induces a bidecomposition
$(X_k)$ with $X_k=\Span x_k$, hence Theorem~\ref{bib} is a special
case of Theorem~\ref{decomp}. Furthermore, let $(X_k)$ be a
bidecomposition; for each~$k$, pick a non-zero vector $x_k\in X_k$.
By Theorem~\ref{decomp}\eqref{decomp-ineq}, the resulting basic
sequence $(x_k)$ satisfies the bibasis inequality, hence is
bibasic. To show a partial converse, we will use the following known fact; see
Theorems~15.21(b) and~15.22(4) in~\cite{Singer:81}, pp.~543 and~546,
respectively. 

\begin{theorem}[\cite{Singer:81}]\label{Singer}
  Let $(X_k)$ be a sequence of closed non-zero subspaces of a Banach
  space~$X$. Suppose that every sequence $(x_k)$ satisfying
  $0\ne x_k\in X_k$ is Schauder basic. Then there exists
  $N\in\mathbb N$ such that the sequence $(X_k)_{k\ge N}$ is a
  Schauder decomposition. If $\dim X_k<\infty$ for every $k$ then one
  may choose $N=1$.
\end{theorem}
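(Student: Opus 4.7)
The plan is to handle the two claims separately: the first by a gliding-hump contradiction using the basic-sequence inequality~\eqref{basis-constant}, and the second by iteratively peeling finite-dimensional subspaces off the front of the already-established decomposition $(X_k)_{k \ge N}$.

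For the existence of $N$, I would argue by contradiction: assume that for every $N$ the sequence $(X_k)_{k \ge N}$ fails the decomposition inequality~\eqref{decomp-basis-constant}. Inductively pick indices $m_0=0<N_1\le m_1<N_2\le m_2<\cdots$ and vectors $x_k\in X_k$ for $N_j\le k\le m_j$, together with $n_j\in[N_j,m_j]$, witnessing a ratio
\[
  \Bignorm{\sum_{k=N_j}^{n_j} x_k} > 4^j \Bignorm{\sum_{k=N_j}^{m_j} x_k},
\]
rescaled so that the denominator equals $2^{-j}$. For $k$ lying in a block set $y_k=x_k$, replacing the value by an arbitrarily small nonzero vector of $X_k$ whenever $x_k=0$; for $k$ not in any block take $y_k$ to be any nonzero vector of $X_k$ of norm at most $4^{-k}$. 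Then $(y_k)$ lies in $\prod(X_k\setminus\{0\})$, so by hypothesis it is Schauder basic with some constant $K$. Testing the basic-sequence inequality on coefficients $\alpha_k=1$ for $k\in[N_j,m_j]$ and $\alpha_k=0$ otherwise yields a ratio comparable to $4^j$ (with perturbation corrections that are $o(1)$ as $j\to\infty$), contradicting $K<\infty$.

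For the finite-dimensional case I would iterate the following step: if $(X_k)_{k\ge N}$ is a Schauder decomposition and $X_{N-1}$ is finite-dimensional, then $(X_k)_{k\ge N-1}$ is a Schauder decomposition. First I would show $X_{N-1}\cap[X_k]_{k\ge N}=\{0\}$: a nonzero $v$ in this intersection admits an expansion $v=\sum_{k\ge N}z_k$, and concatenating tiny nonzero vectors in $X_k$ for $k<N-1$, the vector $v$ at position $N-1$, and $-z_k$ (perturbed to be nonzero) at positions $k\ge N$ produces a sequence in $\prod(X_k\setminus\{0\})$ whose partial sum at position $N-1$ has norm close to $\|v\|$ while the full series converges to a vector of arbitrarily small norm; this contradicts the basic-sequence inequality. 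A parallel construction shows that $X_1,\dots,X_{N-1}$ are in algebraic direct sum. Since $F:=X_1+\cdots+X_{N-1}$ is finite-dimensional and $F\cap[X_k]_{k\ge N}=\{0\}$, the algebraic sum $F+[X_k]_{k\ge N}$ is a topological direct sum, with both projections bounded. Combining the bound on the projection onto $F$, the decomposition constant of $(X_k)_{k\ge N}$, and the finite-dimensional control of the internal partial-sum projections on $F$, I would read off a decomposition constant for $(X_k)_{k\ge N-1}$.

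The main obstacle is the perturbation bookkeeping in both parts: we must insert arbitrarily small nonzero vectors in coordinates where our chosen vector happens to vanish, without spoiling the dominant inequality we are trying to produce. In the first part this requires the filler and perturbation norms to decay geometrically so that the partial sums preceding the $j$-th block stay bounded and the within-block perturbations are negligible compared to the denominator $2^{-j}$. In the second part the perturbations must not destroy the near-cancellation $v-\sum_k z_k=0$ on which the blow-up of the basic-sequence ratio depends. Once this bookkeeping is fixed, both arguments reduce to one application of~\eqref{basis-constant} to the constructed sequence $(y_k)$.
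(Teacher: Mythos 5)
The paper offers no proof of this statement at all --- it is imported verbatim from Singer's book (Theorems~15.21(b) and~15.22(4) of \cite{Singer:81}), so there is nothing in the paper to compare your argument against line by line; I can only assess it on its own terms. Your first part is sound: because you test the basis inequality~\eqref{basis-constant} with $\alpha_k=1$ inside the $j$-th block and $\alpha_k=0$ outside it, the filler vectors are annihilated by the zero coefficients, and only the within-block substitutions (where some $x_k=0$) need to be controlled; making their total norm in block $j$ small compared with the normalized denominator $2^{-j}$ gives ratios growing like $4^j$ for a \emph{single} sequence $(y_k)$, contradicting the finiteness of its basis constant. (You implicitly use the paper's quoted fact that failing to be a Schauder decomposition means failing~\eqref{decomp-basis-constant} for every constant; that is fine.)

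The second part has a genuine gap at the step ``$X_{N-1}\cap[X_k]_{k\ge N}=\{0\}$''. For a \emph{fixed} admissible sequence --- nonzero fillers at positions $k<N-1$, the vector $v$ at position $N-1$, and perturbed vectors $-z_k+\delta_k$ afterwards --- the partial sums converge to the \emph{fixed} vector $w=\sum_{k<N-1}y_k+\sum_k\delta_k$, which is small but in general nonzero. The basis inequality then only yields a lower bound of roughly $\norm{v}/(2\norm{w})$ on the basis constant of that particular sequence. Since the hypothesis gives no uniform bound on basis constants across different members of $\prod(X_k\setminus\{0\})$, exhibiting for each $\varepsilon>0$ a sequence whose constant exceeds $\norm{v}/\varepsilon$ is not a contradiction: each such sequence is still basic. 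The repair is exactly the device you already use in part one: keep the artificially inserted nonzero vectors but assign them coefficient zero. Concretely, set $y_{N-1}=v$ with $\alpha_{N-1}=1$; for $k\ge N$ put $y_k=-z_k$ and $\alpha_k=1$ when $z_k\ne 0$, and $y_k$ arbitrary nonzero with $\alpha_k=0$ when $z_k=0$; give all positions $k<N-1$ arbitrary nonzero vectors with $\alpha_k=0$. Then $\sum_{k=1}^{N-1}\alpha_ky_k=v$ while $\sum_{k=1}^{m}\alpha_ky_k=v-\sum_{k=N}^{m}z_k\to 0$ in norm, so~\eqref{basis-constant} applied to this one sequence forces $\norm{v}=0$, the desired contradiction. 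With that repair, the rest of your induction --- a finite-dimensional subspace meeting a closed subspace trivially yields a topological direct sum with bounded projections, from which one reads off a decomposition constant for $(X_k)_{k\ge N-1}$ --- is standard and correct.
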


We will now prove a similar fact for bidecompositions and bibasic
sequences.

\begin{theorem}\label{bib-decomp}
  Let $(X_k)$ be a sequence of closed non-zero subspaces of a Banach
  lattice~$X$. Suppose that every sequence $(x_k)$ satisfying
  $0\ne x_k\in X_k$ is bibasic. Then there exists
  $N\in\mathbb N$ such that the sequence $(X_k)_{k\ge N}$ is a
  bidecomposition. Moreover, if the sequence $(X_k)$ is a Schauder
  decomposition or if $\dim X_k<\infty$ for every $k$ then
  one may choose $N=1$.
\end{theorem}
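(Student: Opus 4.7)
My plan is to first apply \Cref{Singer} to obtain a Schauder decomposition, and then upgrade it to a bidecomposition by verifying condition~\eqref{decomp-obdd} of \Cref{decomp}. Since every bibasic sequence is in particular Schauder basic, the hypothesis of \Cref{Singer} is satisfied and produces an $N$ for which $(X_k)_{k\ge N}$ is a Schauder decomposition. For the moreover clause, when $\dim X_k<\infty$ for every $k$ the second part of \Cref{Singer} allows $N=1$, and when $(X_k)$ is already assumed to be a Schauder decomposition we simply take $N=1$ and skip this step.

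Let $P_n$ denote the canonical projections of $(X_k)_{k\ge N}$. Given any $x\in[X_k]_{k\ge N}$ with expansion $x=\sum_{k\ge N}y_k$, my goal is to show that $(P_nx)$ is order bounded in $X$; the conclusion will then follow from \Cref{decomp}. The sequence $(y_k)$ may contain zero terms, so the bibasic hypothesis cannot be applied to it directly, and I handle this by a perturbation trick. Set $z_k=y_k$ whenever $y_k\ne 0$, and otherwise choose some nonzero $z_k\in X_k$ with $\norm{z_k}\le 2^{-k}$, which is possible because $X_k$ is nonzero. By hypothesis, the resulting sequence $(z_k)_{k\ge N}$ is bibasic.

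Letting $B=\{k\ge N:y_k=0\}$, one has $\sum_{k=N}^n z_k=P_nx+\sum_{k\in B,\,k\le n}z_k$. The second summand converges in norm by the choice $\norm{z_k}\le 2^{-k}$, so the series $\sum_{k\ge N}z_k$ converges in norm; since $(z_k)$ is bibasic, \Cref{bib}\eqref{bib-obdd} then gives that its partial sums are order bounded in $X$. Likewise $\sum_{k\in B}\abs{z_k}$ is absolutely norm convergent, so by \Cref{nconv-subseq} its partial sums are order bounded as well. Writing $P_nx$ as the difference of these two order-bounded sequences establishes the claim, and an appeal to \Cref{decomp} finishes the proof.

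The only subtle point is that the hypothesis requires all terms of the sequence to be nonzero, while the expansion of a given $x$ may have zero coefficients. This is precisely what the filler construction resolves, and keeping the filler absolutely norm convergent is what ensures it contributes only an order-bounded error rather than destroying the desired boundedness of $(P_nx)$. No Baire-category argument is needed beyond the one already packaged inside \Cref{decomp}.
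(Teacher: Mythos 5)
Your argument is correct and follows essentially the same route as the paper: Theorem~\ref{Singer} to obtain the Schauder decomposition, then a filler construction so that the bibasic hypothesis can be invoked and one of the equivalent conditions of Theorem~\ref{decomp} verified. The only difference is in how the fillers are neutralized: the paper assigns them zero \emph{coefficients} (so the partial sums of $\sum_k\alpha_ky_k$ are exactly $P_nx$ and condition~\eqref{decomp-uconv} follows with no error term), whereas you take norm-summable fillers with coefficient one and subtract their order-bounded contribution to verify condition~\eqref{decomp-obdd}; both are valid.
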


\begin{proof}
  By Theorem~\ref{Singer}, there exists $N\in\mathbb N$ such that the
  sequence $(X_k)_{k\ge N}$ is a Schauder decomposition (in the case
  when $(X_k)$ is already a Schauder decomposition or when
  $\dim X_k<\infty$ for every~$k$, take $N=1$). It suffices to show
  that the sequence $(X_k)_{k\ge N}$ satisfies~\eqref{decomp-uconv} in
  Theorem~\ref{decomp}. Let $x\in[X_k]_{k\ge N}$. For every $k\ge N$,
  there exists a unique $x_k\in X_k$ such that
  $x=\sum_{k=N}^\infty x_k$. For every $k\in\mathbb N$, if $k\ge N$
  and $x_k\ne 0$ then put $y_k=x_k$ and $\alpha_k=1$; if $k<N$ or
  $x_k=0$, put $\alpha_k=0$ and let $y_k$ be an arbitrary non-zero
  element of~$X_k$. By assumption,
  $(y_k)$ is bibasic. Then
  $x=\sum_{k=N}^\infty x_k=\sum_{k=1}^\infty\alpha_ky_k$ and
  Theorem~\ref{bib}\eqref{bib-uconv} guarantees that the series
  converges uniformly.
\end{proof}



\section{Stability of bibasic sequences under perturbations}

It was proved in Theorem~3.1 of~\cite{Gumenchuk:15} that if $X$ is
$\sigma$-order continuous then every block sequence of a bibasic
sequence is again bibasic, and the bibasis constant of the block
sequence does not exceed that of the original sequence. In particular,
every subsequence of a bibasic sequence is again bibasic. This result
now follows immediately from the bibasis inequality in
Theorem~\ref{bib}; moreover, if one uses our definition of a bibasic
sequence then the $\sigma$-order continuity assumption
is not needed:


\begin{corollary}\label{blocks}
  Let $(x_k)$ be a bibasic sequence in a Banach lattice. Then every
  block sequence of $(x_k)$ is bibasic with a bibasis constant that
  does not exceed that of~$(x_k)$. Similarly, every blocking of a
  bidecomposition is a bidecomposition.
\end{corollary}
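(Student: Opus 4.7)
The plan is to reduce both statements to the bibasis (resp.\ bidecomposition) inequality established in Theorems~\ref{bib} and~\ref{decomp}. I will handle the bidecomposition case first since the block sequence case follows by taking $X_k = \Span x_k$.

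Suppose $(X_k)$ is a bidecomposition with constant $M$, and $(Y_j)$ is a blocking: there exist consecutive disjoint finite intervals $I_1 < I_2 < \dots$ of $\mathbb{N}$ with $Y_j = \bigoplus_{k \in I_j} X_k$. Given $y_j \in Y_j$, write $y_j = \sum_{k \in I_j} x_k$ with $x_k \in X_k$. The key observation is that each partial sum $\sum_{j=1}^n y_j$ is itself one of the partial sums of the $(X_k)$-expansion, namely $\sum_{k=1}^{N_n} x_k$ where $N_n = \max I_n$. Consequently, setting $N = N_m$,
\begin{displaymath}
  \bigvee_{n=1}^m \Bigabs{\sum_{j=1}^n y_j} \le \bigvee_{i=1}^N \Bigabs{\sum_{k=1}^i x_k},
\end{displaymath}
since the left-hand join ranges over a sub-collection of the terms on the right. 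Applying Theorem~\ref{decomp}\eqref{decomp-ineq} to $(X_k)$ and taking norms yields
\begin{displaymath}
  \Bignorm{\bigvee_{n=1}^m \Bigabs{\sum_{j=1}^n y_j}}
  \le \Bignorm{\bigvee_{i=1}^N \Bigabs{\sum_{k=1}^i x_k}}
  \le M \Bignorm{\sum_{k=1}^N x_k}
  = M \Bignorm{\sum_{j=1}^m y_j},
\end{displaymath}
so $(Y_j)$ satisfies~\eqref{bidec-ineq} with the same constant $M$. By Remark~\ref{bib-bib-ineq} (or its decomposition analogue stated as the corollary following Theorem~\ref{decomp}), this alone forces $(Y_j)$ to be a Schauder decomposition, hence a bidecomposition.

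For block sequences of a bibasic $(x_k)$, the same argument applies with $X_k = \Span x_k$: a block $y_j = \sum_{k \in I_j} \beta_k x_k$ corresponds to $x_k$ replaced by $\alpha_j \beta_k x_k$ when $k \in I_j$, and the partial sums of $\sum_j \alpha_j y_j$ again appear verbatim among the partial sums of the rearranged $(x_k)$-expansion. The bibasis inequality for $(x_k)$ then delivers the bibasis inequality for $(y_j)$ with constant at most $M$. No serious obstacle is anticipated; the only step requiring a moment's care is the purely combinatorial verification that the indexing of partial sums of the block series embeds into the indexing of partial sums of the original series, which is immediate from the consecutive disjointness of the intervals $I_j$.
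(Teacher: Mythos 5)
Your proposal is correct and follows essentially the same route as the paper, which derives the corollary directly from the bibasis/bidecomposition inequality by noting that the partial sums of the blocked series occur among the partial sums of the original series, so the finite join on the left only shrinks while the right-hand side is unchanged. The paper states this as an immediate consequence without writing out the details; your write-up simply makes the combinatorial identification explicit and correctly invokes Remark~\ref{bib-bib-ineq} (and its decomposition analogue) to conclude that the inequality alone suffices.
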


The following result improves Theorem~3.2 of~\cite{Gumenchuk:15}: we
remove the assumption that the space is $\sigma$-order continuous and
we add an estimate on the bibasis constant. Essentially, we show that
a small perturbation of a bibasic sequence causes a small perturbation of the
bibasis constant.

\begin{theorem}\label{biPSP}
  Let $(x_k)$ be a bibasic sequence in a Banach lattice $X$ with basis
  constant $K$ and bibasis constant~$M$. Let $(y_k)$ be a sequence in
  $X$ with
  \begin{displaymath}
    2K\sum_{k=1}^\infty\frac{\norm{x_k-y_k}}{\norm{x_k}}=:\theta<1.
  \end{displaymath}
  Then $(y_k)$ is bibasic with bibasis constant at most
  \begin{math}
    \frac{M+\theta}{1-\theta}.
  \end{math}
\end{theorem}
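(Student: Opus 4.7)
The plan is to establish the bibasis inequality for $(y_k)$ directly and appeal to Remark~\ref{bib-bib-ineq}. First I would observe that the hypothesis forces $\norm{x_k - y_k} \le \frac{\theta}{2K}\norm{x_k} < \norm{x_k}$, so each $y_k$ is nonzero, and Remark~\ref{bib-bib-ineq} reduces the problem to verifying the bibasis inequality with constant $\frac{M+\theta}{1-\theta}$ for arbitrary scalars $\alpha_1,\dots,\alpha_m$.

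Next, I would fix such scalars and split $y_k = x_k + (y_k - x_k)$ inside the sum. The triangle inequality on the lattice side gives
\begin{displaymath}
  \bigvee_{n=1}^m\Bigabs{\sum_{k=1}^n\alpha_ky_k}
  \le \bigvee_{n=1}^m\Bigabs{\sum_{k=1}^n\alpha_kx_k}
     + \sum_{k=1}^m\abs{\alpha_k}\abs{y_k-x_k}.
\end{displaymath}
Taking norms, the bibasis inequality for $(x_k)$ handles the first term with factor $M\norm{\sum_{k=1}^m\alpha_kx_k}$. For the second term, I would use the standard coordinate estimate that for any Schauder basic sequence with basis constant $K$,
\begin{displaymath}
  \abs{\alpha_k}\norm{x_k} \le 2K\Bignorm{\sum_{j=1}^m\alpha_jx_j}
  \quad\text{for } 1\le k\le m,
\end{displaymath}
which follows from $\alpha_kx_k = P_k - P_{k-1}$ applied to $\sum_{j=1}^m\alpha_jx_j$. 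Summing against $\norm{y_k - x_k}$ and invoking the definition of $\theta$ then bounds the second term by $\theta\norm{\sum_{k=1}^m\alpha_kx_k}$. Combining, we obtain
\begin{displaymath}
  \biggnorm{\bigvee_{n=1}^m\Bigabs{\sum_{k=1}^n\alpha_ky_k}}
  \le (M+\theta)\Bignorm{\sum_{k=1}^m\alpha_kx_k}.
\end{displaymath}

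Finally, I would convert the right-hand side back into an expression in the $y_k$. Writing $\sum\alpha_kx_k = \sum\alpha_ky_k + \sum\alpha_k(x_k - y_k)$ and applying the same coordinate estimate to the last sum yields
\begin{displaymath}
  \Bignorm{\sum_{k=1}^m\alpha_kx_k}
  \le \Bignorm{\sum_{k=1}^m\alpha_ky_k} + \theta\Bignorm{\sum_{k=1}^m\alpha_kx_k},
\end{displaymath}
so $\norm{\sum\alpha_kx_k}\le\frac{1}{1-\theta}\norm{\sum\alpha_ky_k}$; substitution delivers the bibasis inequality for $(y_k)$ with constant $\frac{M+\theta}{1-\theta}$. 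There is no real obstacle here — the only subtlety is to remember that the hypothesis is strong enough to ensure $\theta<1$ bounds \emph{every individual} term $2K\norm{x_k-y_k}/\norm{x_k}$, which is what keeps $y_k$ nonzero and the final quantity $(1-\theta)$ strictly positive.
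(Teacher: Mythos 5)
Your proposal is correct and follows essentially the same route as the paper's proof: the coordinate estimate $\abs{\alpha_k}\norm{x_k}\le 2K\bignorm{\sum_{j=1}^m\alpha_jx_j}$, the lattice triangle inequality comparing the suprema for $(y_k)$ and $(x_k)$, and the reverse estimate $\bignorm{\sum\alpha_kx_k}\le\frac{1}{1-\theta}\bignorm{\sum\alpha_ky_k}$. The only cosmetic difference is that the paper packages the error term as a fixed lattice element $u=\sum_{k=1}^\infty\abs{x_k-y_k}/\norm{x_k}$ before taking norms, whereas you pass to norms immediately; both yield the bound $\theta\bignorm{\sum\alpha_kx_k}$ and the same constant $\frac{M+\theta}{1-\theta}$.
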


\begin{proof}
  Fix scalars $\alpha_1,\dots,\alpha_m$; put
  $x=\sum_{k=1}^m\alpha_kx_k$ and $y=\sum_{k=1}^m\alpha_ky_k$. Note
  that $\abs{\alpha_k}\le\frac{2K\norm{x}}{\norm{x_k}}$ as
  $k=1,\dots,m$. Then
  \begin{displaymath}
    \norm{x-y}\le\sum_{k=1}^m\abs{\alpha_k}\norm{x_k-y_k}
    \le 2K\norm{x}\sum_{k=1}^m\frac{\norm{x_k-y_k}}{\norm{x_k}}
    \le\theta\norm{x}.
  \end{displaymath}
  This implies that
  \begin{math}
    \norm{x}\le\norm{x-y}+\norm{y}
    \le\theta\norm{x}+\norm{y},
  \end{math}
  so that $\norm{x}\le\frac{\norm{y}}{1-\theta}$.
  Define $u:=\sum_{k=1}^\infty \frac{\abs{x_k-y_k}}{\norm{x_k}}$.
  Then $\norm{u}\le\frac{\theta}{2K}$. For every $n=1,\dots,m$, we
  have
  \begin{displaymath}
    \Bigabs{\sum_{k=1}^n\alpha_ky_k}
    \le\Bigabs{\sum_{k=1}^n\alpha_kx_k}
    +\sum_{k=1}^n\abs{\alpha_k}\cdot\abs{x_k-y_k}
    \le\Bigabs{\sum_{k=1}^n\alpha_kx_k}+2K\norm{x}u.
  \end{displaymath}
  Therefore,
  \begin{displaymath}
    \bigvee_{n=1}^m\Bigabs{\sum_{k=1}^n\alpha_ky_k}
    \le\bigvee_{n=1}^m\Bigabs{\sum_{k=1}^n\alpha_kx_k}
    +2K\norm{x}u,
  \end{displaymath}
  which yields, after an application of the bibasis inequality
  for~$(x_k)$,
  \begin{displaymath}
    \biggnorm{\bigvee_{n=1}^m\Bigabs{\sum_{k=1}^n\alpha_ky_k}}
    \le M\norm{x}+2K\norm{x}\norm{u}
    \le\frac{M+\theta}{1-\theta}\norm{y}.
  \end{displaymath}
  Therefore, $(y_k)$ satisfies the bibasis inequality and the
  conclusion follows.
\end{proof}

\begin{corollary}\label{subspace}
  Let $(x_k)$ be a bibasic sequence in a Banach lattice~$X$. Then
  every closed infinite-dimensional subspace of $[x_k]$ contains a
  bibasic sequence.
\end{corollary}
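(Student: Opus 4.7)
The plan is to combine the classical Bessaga--Pelczynski selection principle with Corollary~\ref{blocks} and the perturbation result Theorem~\ref{biPSP}. Let $Y$ be a closed infinite-dimensional subspace of $[x_k]$ and let $K$ denote the basis constant of $(x_k)$.

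First, I would invoke Bessaga--Pelczynski in the following standard sharp form: given any sequence $(\varepsilon_k)$ of positive reals with $\sum_k \varepsilon_k$ as small as we wish, there exist a normalized sequence $(y_k) \subseteq Y$ and a block basic sequence $(z_k)$ of $(x_k)$ such that
\begin{displaymath}
  \norm{y_k - z_k} \le \varepsilon_k \norm{z_k} \qquad \text{for every } k.
\end{displaymath}
I would choose the $\varepsilon_k$ summable and small enough that $2K \sum_{k=1}^\infty \varepsilon_k < 1$; this is possible since $K$ is fixed.

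Second, by Corollary~\ref{blocks}, the block basic sequence $(z_k)$ is bibasic, and its basis constant is at most $K$ (a standard fact about blockings, which follows at once from the basis inequality~\eqref{basis-constant}). Thus we may apply Theorem~\ref{biPSP} to $(z_k)$ in place of $(x_k)$, with the perturbation $(y_k)$: the hypothesis reads
\begin{displaymath}
  2K \sum_{k=1}^\infty \frac{\norm{z_k - y_k}}{\norm{z_k}} \le 2K \sum_{k=1}^\infty \varepsilon_k < 1,
\end{displaymath}
so the conclusion gives that $(y_k)$ is bibasic. Since $(y_k) \subseteq Y$, this proves the corollary.

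The main obstacle is really just verifying that the Bessaga--Pelczynski selection principle can be set up with the per-term approximation ratio $\norm{z_k - y_k}/\norm{z_k}$ controlled by an arbitrarily preassigned summable sequence, not merely with $\norm{z_k - y_k}$ small. This is a standard strengthening, obtained by choosing the normalized approximations $y_k$ one at a time inside $Y$ using the fact that $Y$ is infinite-dimensional and the projections onto the span of the already-chosen block have finite rank; so no real difficulty arises beyond careful bookkeeping.
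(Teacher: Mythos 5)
Your proposal is correct and follows essentially the same route as the paper: apply Bessaga--Pe{\l}czy{\'n}ski's selection principle to obtain $(y_k)$ in $Y$ close to a block sequence of $(x_k)$, note that the block sequence is bibasic by Corollary~\ref{blocks}, and conclude via Theorem~\ref{biPSP}. Your extra care about arranging the per-term ratios $\norm{z_k-y_k}/\norm{z_k}$ to be summable with $2K\sum_k\varepsilon_k<1$ is exactly what the paper means by ``sufficiently fast,'' so no gap remains.
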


\begin{proof}
  Let $Y$ be a closed infinite-dimensional subspace of~$[x_k]$. Using
  Bessaga-Pe{\l}\-czy{\'n}\-ski's selection principle (see, e.g.,
  Proposition~1.a.11 in~\cite{Lindenstrauss:77}), one can find a basic
  sequence $(y_k)$ in $Y$ and a block sequence $(u_k)$ of $(x_k)$ such
  that $\norm{y_k-u_k}\to 0$ sufficiently fast, so that $(y_k)$ is bibasic by
  Theorem~\ref{biPSP} (note that $(u_k)$ is bibasic by
  Corollary~\ref{blocks}).
\end{proof}

\begin{remark}
  Since disjoint sequences are bibasic, it is clear that every Banach
  lattice contains a bibasic sequence. It is open whether every closed
  infinite dimensional subspace of a Banach lattice contains a bibasic
  sequence. By Corollary~\ref{subspace}, this is the case when $X$
  itself has a bibasis.  We will come back to this problem in the
  final section of the paper.
\end{remark}

\begin{example}
  \emph{Bibasic sequences are not stable under duality.} Let $X=c_0$
  and $x_k=\sum_{n=1}^ke_n$. Being a basis of~$c_0$, $(x_k)$ is
  a bibasis by~\Cref{AM-basic-bib}. Its coordinate functionals
  satisfy $x_k^*=e_k^*-e_{k+1}^*$. As in Examples~\ref{dual-summ}
  and~\ref{nonbib-l1}, $(x_k^*)$ fails to be bibasic.
\end{example}

\section{Stability of bibasic sequences under operators}

Suppose that $T\colon X\to Y$ is an isomorphic embedding between
Banach lattices. Then, clearly, $T$ maps basic sequences in $X$ to basic
sequences in~$Y$. What additional requirements should one impose on
$T$ to ensure that $T$ maps bibasic sequences to bibasic sequences?
\Cref{decomp} suggests that one look at operators that preserve
uniform convergence, or at least turn uniform convergence into order
convergence. Inspired by this, we characterize order bounded operators
in a way that mimics the bibasis theorem.




\begin{theorem}\label{uconv-oconv-ob}
  Let $T\colon X\to Y$ be a linear operator between Archimedean vector
  lattices. TFAE:
  \begin{enumerate}
  \item\label{uconv-oconv-ob-ob} $T$ is order bounded;
  \item\label{uconv-oconv-ob-uu} $x_\alpha\goesu 0$ implies
    $Tx_\alpha\goesu 0$ for all nets $(x_\alpha)$ in~$X$;
  \item\label{uconv-oconv-ob-uo} $x_\alpha\goesu 0$ implies
    $Tx_\alpha\goeso 0$ for all nets $(x_\alpha)$ in~$X$;
  \item\label{uconv-oconv-ob-obdd} $x_\alpha\goesu 0$ implies
    that $(Tx_\alpha)$ has an order bounded tail for all nets $(x_\alpha)$ in~$X$.
  \end{enumerate}
\end{theorem}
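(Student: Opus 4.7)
My plan is to establish the cycle (i) $\Rightarrow$ (ii) $\Rightarrow$ (iii) $\Rightarrow$ (iv) $\Rightarrow$ (i). The first three implications are essentially bookkeeping: if $T$ is order bounded and $x_\alpha\goesu 0$ with regulator $e$, pick $f\in Y_+$ with $T[-e,e]\subseteq[-f,f]$; then $\abs{x_\alpha}\le\varepsilon e$ forces $x_\alpha\in\varepsilon[-e,e]$ and therefore $\abs{Tx_\alpha}\le\varepsilon f$, giving (ii) with regulator $f$. Uniform convergence always implies order convergence, so (ii)$\Rightarrow$(iii); and any net that $o$-converges to $0$ is dominated on some tail by the first term of any dominating sequence $u_\gamma\downarrow 0$, which gives (iii)$\Rightarrow$(iv).

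The heart of the argument is (iv)$\Rightarrow$(i), which I will prove by contraposition. Assume $T$ is not order bounded; then there exists $e\in X_+$ such that $T[-e,e]$ is not contained in any principal order interval of $Y$, i.e.\ for every $v\in Y_+$ there is some $x\in[-e,e]$ with $\abs{Tx}\not\le v$. I will index a net by the directed set $\Lambda=\mathbb{N}\times Y_+$ under componentwise order (directedness uses $\vee$ in the second coordinate), selecting for each $(n,v)\in\Lambda$ a vector $x_{(n,v)}\in[-e,e]$ with $\abs{Tx_{(n,v)}}\not\le v$, and then setting $y_{(n,v)}=\tfrac{1}{n}x_{(n,v)}$. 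The inequality $\abs{y_{(n,v)}}\le\tfrac{1}{n}e$ immediately yields $y_{(n,v)}\goesu 0$ with regulator $e$.

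To finish, I need to verify that $(Ty_{(n,v)})$ has no order bounded tail. Given any putative tail $\{(n,v)\ge (n_0,v_0)\}$ and candidate upper bound $u\in Y_+$, take the specific index $(n_0, n_0u+v_0)$, which lies in the tail since $n_0u+v_0\ge v_0$. If $\abs{Ty_{(n_0,n_0u+v_0)}}\le u$ held, then $\abs{Tx_{(n_0,n_0u+v_0)}}\le n_0u\le n_0u+v_0$, contradicting the defining property of $x_{(n_0,n_0u+v_0)}$.

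The main obstacle is precisely this calibration: a naive sequence $\tfrac{1}{n}x_n$ extracted from a non-order-bounded set can be tamed by bounds that grow like $nv$, so one really must retain the freedom to pit each prospective order bound $u$ against an index $v$ chosen \emph{after} seeing $u$. Encoding this into the second coordinate of $\Lambda$ is what makes the net (as opposed to a sequence) do the work; everything else is routine. I should also note that the argument does not use completeness or any $\sigma$-structure on $X$ or $Y$, consistent with the statement's hypothesis that both are merely Archimedean vector lattices.
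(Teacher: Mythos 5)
Your proof is correct, and implications (i)$\Rightarrow$(ii)$\Rightarrow$(iii)$\Rightarrow$(iv) coincide with the paper's. For the crucial step (iv)$\Rightarrow$(i), however, you argue by contraposition where the paper argues directly, and the two constructions are genuinely different. The paper fixes $e\in X_+$ and considers the canonical net $v_{(n,x)}=\frac1n x$ indexed by $\Lambda=\mathbb N\times[0,e]$ with the \emph{lexicographic} order; a tail then contains the entire slice $\{(n_0+1,x)\mid x\in[0,e]\}$, so a single order bound $u$ on that tail immediately gives $T[0,e]\subseteq[-(n_0+1)u,(n_0+1)u]$. Your net instead lives over $\mathbb N\times Y_+$ with the componentwise order, with the second coordinate ranging over candidate order bounds in the \emph{codomain}, and you select witnesses $x_{(n,v)}$ defeating each bound $v$. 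Both exploit the same underlying point --- that a net with a rich enough index set can encode a quantifier over an uncountable family, which is exactly why the sequential version (Proposition 5.3 of the paper) does not imply order boundedness. The paper's version is slightly leaner: the net is canonical (no selection/choice needed), there is no contraposition, and one tail bound finishes the proof in one line, whereas you must calibrate the index $(n_0,n_0u+v_0)$ against every candidate bound $u$ and every tail. On the other hand, your argument makes the failure mechanism more transparent by explicitly exhibiting a uniformly null net with no order bounded tail. One cosmetic remark: in (iii)$\Rightarrow$(iv) the dominating family $u_\gamma\downarrow 0$ in the definition of order convergence is a net, which need not have a ``first term''; you should say ``any fixed term'' instead, and in (iv)$\Rightarrow$(i) you implicitly use that order bounds may be taken to be symmetric intervals $[-u,u]$, which is harmless but worth a word.
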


\begin{proof}
  \eqref{uconv-oconv-ob-ob}$\Rightarrow$\eqref{uconv-oconv-ob-uu}
  Suppose that $T$ is order bounded and
  $x_\alpha\goesu 0$. Then there exists $e\in X_+$ such that for every
  $\varepsilon>0$ there exists $\alpha_0$ such that
  $\abs{x_\alpha}\le\varepsilon e$ whenever $\alpha\ge\alpha_0$. Find
  $a\in Y_+$ with $T[-e,e]\subseteq[-a,a]$. Then for
  $\varepsilon$ and $\alpha_0$ as above, we have
  $\abs{Tx_\alpha}\le\varepsilon a$ for all $\alpha\ge\alpha_0$. This
  shows that $Tx_\alpha\goesu 0$.

  \eqref{uconv-oconv-ob-uu}$\Rightarrow$\eqref{uconv-oconv-ob-uo}$\Rightarrow$\eqref{uconv-oconv-ob-obdd}  is
 trivial.

 \eqref{uconv-oconv-ob-obdd}$\Rightarrow$\eqref{uconv-oconv-ob-ob} Take
 $e\in X_+$; it suffices to show that $T[0,e]$ is order bounded in~$Y$. Let
  \begin{math}
    \Lambda=\bigl\{(n,x)\mid n\in\mathbb N,\ x\in[0,e]\bigr\}
  \end{math}
  ordered lexicographically: $(n,x)\le(m,y)$ whenever $n< m$ or $n=m$
  and $x\le y$. Clearly, $\Lambda$ is a directed set. Consider the
  following net in $X$ indexed by $\Lambda$:
  $v_{(n,x)}=\frac{1}{n}x$. It follows from
  $0\le v_{(n,x)}\le\frac1n e$ that $v_{(n,x)}\goesu 0$. By
  assumption, the net $(Tv_{(n,x)})$ has an order bounded tail, i.e.,
  there exist $n_0\in\mathbb N$, $x_0\in[0,e]$, and $u\in Y_+$ such
  that $Tv_{(n,x)}\in[-u,u]$ whenever $(n,x)\ge(n_0,x_0)$. In
  particular, $\frac{1}{n_0+1}Tx=Tv_{(n_0+1,x)}\in[-u,u]$ for all
  $x\in[0,e]$. This shows that $T[0,e]$ is order bounded in~$Y$.
\end{proof}

\begin{remark}\label{uc-bdd}
  This theorem yields a simple proof of the classical fact that every
  order bounded (and, in particular, every positive) operator from a
  Banach lattice to a normed lattice is norm continuous. Indeed, let
  $T\colon X\to Y$ be such an operator. Suppose that $x_n\goesnorm 0$
  in~$X$; we need to show that $Tx_n\goesnorm 0$ in~$Y$. Suppose not,
  then, after passing to a subsequence, we can find $\varepsilon>0$
  such that $\norm{Tx_n}>\varepsilon$ for all~$n$. Since
  $x_n\goesnorm 0$, passing to a further subsequence, we may assume
  that $x_n\goesu 0$. Theorem~\ref{uconv-oconv-ob} yields that
  $Tx_n\goesu 0$ and, therefore, $Tx_n\goesnorm 0$, which contradicts
  $\norm{Tx_n}>\varepsilon$ for all~$n$.
  %
\end{remark}

We are going to show next that the sequential analogues of the
conditions \eqref{uconv-oconv-ob-uu}, \eqref{uconv-oconv-ob-uo}, and
\eqref{uconv-oconv-ob-obdd} in Theorem~\ref{uconv-oconv-ob} are also
equivalent, even though they do not imply order boundedness. Moreover,
we can consider operators defined on a subspace $Z$ of $X$ instead of
all of~$X$. For a sequence $(x_n)$ in~$Z$, the notation $x_n\goesu 0$
means that the sequence converges to zero uniformly in~$X$.

\begin{proposition}\label{uconv-oconv-seq}
  Let $X$ and $Y$ be two Archimedean vector lattices, $Z$ a subspace
  of~$X$, and $T\colon Z\to Y$ a linear operator. TFAE:
  \begin{enumerate}
  \item\label{uconv-oconv-seq-uu} $x_n\goesu 0$ implies
    $Tx_n\goesu 0$ for all sequences $(x_n)$ in~$Z$;
  \item\label{uconv-oconv-seq-uo} $x_n\goesu 0$ implies
    $Tx_n\goeso 0$ for all sequences $(x_n)$ in~$Z$;
  \item\label{uconv-oconv-seq-obdd} $x_n\goesu 0$ implies
    $(Tx_n)$ is order bounded for all sequences $(x_n)$ in~$Z$.
  \end{enumerate}
\end{proposition}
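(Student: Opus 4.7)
The plan is to dispose of the easy implications $\eqref{uconv-oconv-seq-uu}\Rightarrow\eqref{uconv-oconv-seq-uo}\Rightarrow\eqref{uconv-oconv-seq-obdd}$ in one line, then attack $\eqref{uconv-oconv-seq-obdd}\Rightarrow\eqref{uconv-oconv-seq-uu}$ by the scaling trick that appears in Lemma~\ref{nconv-subseq} and Proposition~\ref{uconv-stable}: one speeds up a uniformly null sequence by scalars $\lambda_n\to\infty$ in such a way that it remains uniformly null, and then uses the hypothesis on the faster sequence to pull out an order bound that beats $\lambda_n^{-1}$.

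Concretely, to prove \eqref{uconv-oconv-seq-obdd}$\Rightarrow$\eqref{uconv-oconv-seq-uu} I would start with $(x_n)\subseteq Z$ and $e\in X_+$ realizing $x_n\goesu 0$ in~$X$. Unwinding the definition, for every $m\in\mathbb N$ there is $N_m$ such that $\abs{x_n}\le\frac{1}{m^3}e$ for all $n\ge N_m$, and I may assume $N_1<N_2<\dots$. I then set $\lambda_n=1$ for $n<N_1$ and $\lambda_n=m$ for $N_m\le n<N_{m+1}$, so that $\lambda_n\to\infty$ and $\abs{\lambda_n x_n}\le\frac{1}{m^2}e$ on the block $[N_m,N_{m+1})$. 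Hence $\lambda_n x_n\goesu 0$ in~$X$, and this sequence lives in $Z$ by linearity of the scaling. Applying hypothesis~\eqref{uconv-oconv-seq-obdd} to $(\lambda_n x_n)$ yields $w\in Y_+$ with $\abs{T(\lambda_n x_n)}\le w$ for all~$n$, i.e.\ $\abs{Tx_n}\le\frac{1}{\lambda_n}w$. Since $\lambda_n\to\infty$, this gives $Tx_n\goesu 0$ in~$Y$ relative to~$w$.

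I do not anticipate any serious obstacle: the only subtlety is making sure the scaled sequence is still uniformly null with the same dominating vector~$e$, which is handled by the cubic gap $\tfrac{1}{m^3}$ versus the linear scaling~$\lambda_n=m$. One minor point worth noting in the write-up is why an order bound on a tail of $(T(\lambda_n x_n))$ suffices to conclude an order bound on the whole sequence, but this is automatic for sequences (enlarge $w$ to absorb the finitely many initial terms). This is also what distinguishes the sequential statement from the net statement in Theorem~\ref{uconv-oconv-ob}: for nets one gets genuine order boundedness of~$T$, whereas for sequences the conclusion is purely a continuity statement between the uniform topologies on~$Z$ and~$Y$, which is why no conclusion of order boundedness of~$T$ is claimed here.
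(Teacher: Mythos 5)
Your proposal is correct and follows essentially the same route as the paper: the paper also reduces everything to \eqref{uconv-oconv-seq-obdd}$\Rightarrow$\eqref{uconv-oconv-seq-uu} by choosing $\lambda_n\uparrow\infty$ with $\lambda_nx_n\goesu 0$ (a step it leaves as ``easy to see'', which your block construction supplies explicitly) and then dividing the resulting order bound by~$\lambda_n$. Note only that hypothesis \eqref{uconv-oconv-seq-obdd} already gives order boundedness of the whole sequence $(T(\lambda_nx_n))$, so your remark about upgrading a tail bound is not needed here.
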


\begin{proof}
  \eqref{uconv-oconv-seq-uu}$\Rightarrow$\eqref{uconv-oconv-seq-uo}$\Rightarrow$\eqref{uconv-oconv-seq-obdd}
  is trivial. Suppose that \eqref{uconv-oconv-seq-obdd} holds; let
  $(x_n)$ be a sequence in $Z$ such that $x_n\goesu 0$ in~$X$. It is easy
  to see that there is a sequence $(\lambda_n)$ in $\mathbb R_+$ such
  that $\lambda_n\uparrow\infty$ and $\lambda_nx_n\goesu 0$. By
  assumption, the sequence $T(\lambda_nx_n)$ is order bounded. Let
  $a\in Y_+$ such that $T(\lambda_nx_n)\in[-a,a]$ for every~$n$. It
  follows that $\abs{Tx_n}\le\frac{1}{\lambda_n}a$, so that
  $Tx_n\goesu 0$.
\end{proof}

An operator which satisfies the equivalent conditions of
Proposition~\ref{uconv-oconv-seq} will be called \term{sequentially
  uniformly continuous}. By Theorem~\ref{uconv-oconv-ob}, every order
bounded operator is sequentially uniformly
continuous. Remark~\ref{uc-bdd} shows that every sequentially
uniformly continuous operator from a closed subspace of a Banach
lattice to a Banach lattice is norm continuous.

\begin{proposition}
   Let $T\colon Z\to Y$ be an operator from a closed subspace $Z$ of a
  Banach lattice $X$ to a Banach lattice~$Y$. Then $T$ is
  sequentially uniformly continuous iff the sequence
  $\bigl(\bigvee_{k=1}^n\abs{Tx_k}\bigr)_n$ is norm bounded whenever
  $x_k\goesu 0$.
\end{proposition}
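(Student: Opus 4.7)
The forward direction is essentially free: if $T$ is sequentially uniformly continuous and $x_k\goesu 0$, then $Tx_k\goesu 0$, so $(Tx_k)$ is order bounded by some $a\in Y_+$, whence $\bigvee_{k=1}^n\abs{Tx_k}\le a$ for every $n$ and in particular the sequence is norm bounded.

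For the converse, assume the $\bigvee$-sequences are norm bounded whenever $x_k\goesu 0$, and fix $x_n\goesu 0$ in $Z$. By Proposition~\ref{uconv-oconv-seq} it is enough to show that $(Tx_n)$ is order bounded in $Y$. The plan is to choose $\lambda_n\uparrow\infty$ so slowly that we still have $\lambda_n x_n\goesu 0$, and then apply the hypothesis to the sequence $(\lambda_n x_n)$ to obtain a constant $C$ with $\bignorm{\bigvee_{k=1}^n\lambda_k\abs{Tx_k}}\le C$ for all $n$.

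The payoff from this scaling is a tail estimate for $v_n:=\bigvee_{k=1}^n\abs{Tx_k}$. Since $\lambda_k\ge\lambda_{n+1}$ whenever $k\ge n+1$, the inequality $\abs{Tx_k}\le\lambda_k\abs{Tx_k}/\lambda_{n+1}$ yields
\begin{displaymath}
0\le v_{n+p}-v_n\le\bigvee_{k=n+1}^{n+p}\abs{Tx_k}\le\frac{1}{\lambda_{n+1}}\bigvee_{k=n+1}^{n+p}\lambda_k\abs{Tx_k},
\end{displaymath}
so $\norm{v_{n+p}-v_n}\le C/\lambda_{n+1}\to 0$ uniformly in $p$. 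Thus $(v_n)$ is norm Cauchy and converges to some $v\in Y$. Because $(v_n)$ is increasing and the positive cone of $Y$ is norm closed, $v_m-v_n\ge 0$ for $m\ge n$ passes to the norm limit to give $v_n\le v$ for every $n$; hence $\abs{Tx_n}\le v_n\le v$, and $(Tx_n)$ is order bounded.

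The main obstacle is the bridge from norm boundedness of $(v_n)$ — which is what the literal hypothesis supplies — to norm Cauchyness, since in a general Banach lattice an increasing norm-bounded sequence need not be Cauchy (take $\sum_{k=1}^n e_k$ in $c_0$). The scaling trick resolves this by inserting the decaying factor $1/\lambda_{n+1}$ into the tail estimate, and this single idea drives the whole proof.
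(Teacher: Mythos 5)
Your proof is correct, but it takes a genuinely different route from the paper's. For the converse, the paper stays with the unscaled sequence: it notes that the increasing norm-bounded sequence $\bigl(\bigvee_{k=1}^n\abs{Tx_k}\bigr)_n$ is order bounded in $Y^{**}$ because $Y^{**}$ is monotonically complete, deduces via Proposition~\ref{uconv-oconv-seq} that $T$ is sequentially uniformly continuous as an operator into~$Y^{**}$, and then pulls the conclusion back down to $Y$ using Proposition~\ref{uconv-stable} (uniform convergence does not depend on the ambient Banach lattice). You instead avoid the bidual entirely: by applying the hypothesis to the rescaled sequence $(\lambda_n x_n)$ with $\lambda_n\uparrow\infty$ chosen so that $\lambda_nx_n\goesu 0$ (the same device the paper uses inside the proof of Proposition~\ref{uconv-oconv-seq}), you upgrade norm boundedness of $v_n=\bigvee_{k=1}^n\abs{Tx_k}$ to the tail estimate $\norm{v_{n+p}-v_n}\le C/\lambda_{n+1}$, so $(v_n)$ is norm Cauchy and its limit, by closedness of the positive cone, is an order bound for $(Tx_n)$ inside $Y$ itself. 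Your argument is more self-contained and elementary (no monotone completeness of the bidual, no ambient-space stability of uniform convergence needed), and it even yields slightly more, namely norm convergence of the finite suprema; the paper's argument is shorter on the page but only because it outsources the work to Proposition~2.4.19(ii) of \cite{Meyer-Nieberg:91} and to Proposition~\ref{uconv-stable}. All the individual steps you use — the existence of the slowly growing $\lambda_n$, the estimate $(a\vee b)-a\le b$ for $a,b\ge 0$, and the passage of $v_m-v_n\ge 0$ to the limit — are sound.
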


\begin{proof}
  The forward implication is an immediate corollary of
  Proposition~\ref{uconv-oconv-seq}. To prove the converse, suppose
  that $x_k\goesu 0$ implies that the sequence
  $\bigl(\bigvee_{k=1}^n\abs{Tx_k}\bigr)_n$ is norm bounded. Then this
  sequence is also norm bounded in~$Y^{**}$. Since $Y^{**}$ is
  monotonically complete by, e.g., Proposition~2.4.19(ii)
  in~\cite{Meyer-Nieberg:91}, this sequence is order bounded
  in~$Y^{**}$, hence $(Tx_k)$ is order bounded in~$Y^{**}$. By
  Proposition~\ref{uconv-oconv-seq}, $T$ is sequentially uniformly
  continuous as an operator from $Z$ to~$Y^{**}$. Hence, if
  $x_k\goesu 0$ then $Tx_k\goesu 0$ in~$Y^{**}$. Since $Y$ is a closed
  sublattice in~$Y^{**}$, it follows from
  Proposition~\ref{uconv-stable} that $Tx_k\goesu 0$
  in~$Y$. Therefore, $T\colon Z\to Y$ is sequentially uniformly
  continuous.
\end{proof}

\begin{example}
  \emph{A sequentially uniformly continuous operator which fails to be
    order bounded.} Let $T\colon c\to c_0$, defined by
  \begin{displaymath}
    T\colon(a_1,a_2,\dots)\mapsto(a_\infty,a_\infty-a_1,a_\infty-a_2,\dots),
    \text{ where }a_\infty=\lim_na_n.
  \end{displaymath}
  Note that
  \begin{math}
    T\colon(0,\dots,0,1,1,\dots)\mapsto(1,\dots,1,0,0,\dots),
  \end{math}
  so that $T[0,\one]$ is not order bounded in~$c_0$, hence $T$ is not
  order bounded. On the other hand, suppose that $x_n\goesu 0$
  in~$c$. Then $x_n\goesnorm 0$ in~$c$. Since $T$ is norm bounded, it
  follows that $Tx_n\goesnorm 0$ in~$c_0$. It is easy to see that this
  yields $Tx_n\goesu 0$ in~$c_0$.
\end{example}

\begin{proposition}\label{sequcont}
  Let $(X_k)$ be a bidecomposition in a Banach lattice~$X$, let
  $T\colon[X_k]\to Y$ be a sequentially uniformly continuous norm
  isomorphic embedding into a Banach lattice~$Y$. Then the sequence
  $(TX_k)$ is a bidecomposition.
\end{proposition}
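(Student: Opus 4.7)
My plan is to verify condition~\eqref{decomp-uconv} of Theorem~\ref{decomp} for the sequence $(TX_k)$, which will establish it as a bidecomposition.

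First I would establish that $(TX_k)$ is a Schauder decomposition of $[TX_k]$ in~$Y$. Because $T$ is a norm isomorphic embedding, $T[X_k]$ is closed in $Y$, and a density argument identifies $[TX_k]$ with~$T[X_k]$: each $y \in [TX_k]$ is uniquely of the form $y = Tx$ with $x \in [X_k]$. Applying $T$ termwise to the Schauder expansion $x = \s x_k$ with $x_k \in X_k$ and using norm continuity of $T$ gives $y = \s Tx_k$ with $Tx_k \in TX_k$; injectivity of $T$ propagates uniqueness of this expansion from $(X_k)$ to~$(TX_k)$. Denoting the canonical projections of $(X_k)$ and $(TX_k)$ by $P_n$ and $Q_n$, we obtain the key intertwining identity $Q_n(Tx) = T(P_n x)$ for all $x \in [X_k]$.

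Next, to verify condition~\eqref{decomp-uconv} for $(TX_k)$, I would fix $y \in [TX_k]$, write $y = Tx$, and invoke the hypothesis that $(X_k)$ is a bidecomposition in the form of Theorem~\ref{decomp}\eqref{decomp-uconv}: this gives $P_n x - x \goesu 0$ in~$X$. Sequential uniform continuity of $T$, in the form of condition~\eqref{uconv-oconv-seq-uu} of Proposition~\ref{uconv-oconv-seq}, then yields $T(P_n x - x) \goesu 0$ in~$Y$, which by the intertwining identity is precisely $Q_n y - y \goesu 0$ in~$Y$. Since $y \in [TX_k]$ was arbitrary, Theorem~\ref{decomp}\eqref{decomp-uconv} is verified for $(TX_k)$ and the conclusion follows.

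The argument is essentially tautological once the pieces are lined up: the notion of sequential uniform continuity was tailor-made to transport uniform convergence of partial sums through a norm isomorphic embedding, so the main conceptual step is the choice of characterization~\eqref{decomp-uconv} of bidecompositions (rather than, say, the norm-boundedness formulation~\eqref{decomp-nbdd}, which would not interact cleanly with $T$). The only point that deserves a moment's care is that uniform convergence in Theorem~\ref{decomp}\eqref{decomp-uconv} and in Proposition~\ref{uconv-oconv-seq}\eqref{uconv-oconv-seq-uu} are both taken ambient in the Banach lattices $X$ and $Y$ respectively, which is exactly what is needed for the composition to go through.
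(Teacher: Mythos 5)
Your proposal is correct and follows essentially the same route as the paper: establish that $(TX_k)$ is a Schauder decomposition via the norm isomorphic embedding, then transport the uniform convergence of partial sums (characterization~\eqref{decomp-uconv} of Theorem~\ref{decomp}) through $T$ using sequential uniform continuity. The paper's proof is just a terser version of yours, and your remark about uniform convergence being taken in the ambient lattices is a correct and worthwhile observation.
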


\begin{proof}
  Since $T$ is a norm isomorphic embedding, $(TX_k)$ is a Schauder
  decomposition in~$Y$. Suppose that $y\in[TX_k]$ in~$Y$. Then
  $y=\s Tx_k$ for some $x_k\in X_k$. It follows that
  $y=Tx$, where $x=\s x_k$ in~$X$. Since $(X_k)$ is
  a bidecomposition, we have $x=\tus x_k$. By assumption,
  $y=\tus Tx_k$.
\end{proof}

\begin{corollary}
  Let $T\colon X\to Y$ be an order bounded norm isomorphic embedding
  between Banach lattices. Then $T$ maps bibasic sequences to bibasic
  sequences.
\end{corollary}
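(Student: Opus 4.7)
The plan is simply to combine the two main technical results just established --- Theorem~\ref{uconv-oconv-ob} and Proposition~\ref{sequcont} --- with the standard correspondence between bidecompositions and bibasic sequences from Section~\ref{sec:bidec}. There is essentially no obstacle; all the substantive work lies in the earlier propositions, and this corollary is a direct repackaging.

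First I take a bibasic sequence $(x_k)$ in~$X$ and set $X_k := \Span x_k$. As observed in the discussion following Theorem~\ref{decomp}, the sequence of one-dimensional subspaces $(X_k)$ is then a bidecomposition in~$X$. Next, since $T$ is order bounded, Theorem~\ref{uconv-oconv-ob} guarantees that $T$ is uniformly-to-uniformly continuous on nets, and in particular that its restriction to any subspace is sequentially uniformly continuous in the sense of Proposition~\ref{uconv-oconv-seq}. Restricting $T$ to $[X_k]$ therefore yields a sequentially uniformly continuous norm isomorphic embedding of $[X_k]$ into~$Y$, and Proposition~\ref{sequcont} applies to give that $(TX_k)$ is a bidecomposition in~$Y$.

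To extract a bibasic sequence from this bidecomposition, note that $T$ is injective (being a norm isomorphic embedding), so each $Tx_k$ is non-zero and $TX_k = \Span Tx_k$. Applying condition~\eqref{decomp-ineq} of Theorem~\ref{decomp} to the bidecomposition $(TX_k)$ with the distinguished non-zero vectors $Tx_k \in TX_k$ produces exactly the bibasis inequality for the sequence $(Tx_k)$. By Remark~\ref{bib-bib-ineq}, $(Tx_k)$ is therefore bibasic, which completes the argument.
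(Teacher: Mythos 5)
Your argument is correct and is precisely the route the paper intends (the corollary is stated without proof immediately after Proposition~\ref{sequcont}): order boundedness gives sequential uniform continuity via Theorem~\ref{uconv-oconv-ob}, Proposition~\ref{sequcont} then transports the bidecomposition $(\Span x_k)$ to $(\Span Tx_k)$, and the passage back to a bibasic sequence is exactly the observation made in the paragraph preceding Theorem~\ref{Singer}. Nothing is missing.
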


In Theorem~5.1 of~\cite{Gumenchuk:15} the authors observe that
$L_1[0,1]$ admits no bibasis; moreover, $L_1[0,1]$ does not even embed
by means of a lattice embedding $T$ with $\sigma$-order continuous
inverse map into a $\sigma$-order continuous Banach lattice with a
bibasis. A careful analysis of the proof, together with the preceding
results of the current paper, reveals that the assumptions may
be considerably relaxed:

\begin{theorem}
  There is no norm isomorphic embedding $T$ from $L_1[0,1]$ to a
  Banach lattice such that $\Range T$ is contained in the closed
  linear span of a bibasic sequence (or even of a bi-FDD) and
  $T^{-1}$, viewed as an operator from $\Range T$ to $L_1[0,1]$, is
  sequentially uniformly continuous.
\end{theorem}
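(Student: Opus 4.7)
I would prove this by contradiction, adapting the main argument of~\cite[Theorem~5.1]{Gumenchuk:15}. Suppose such $T\colon L_1[0,1]\to Y$ exists, with $\Range T\subseteq[X_k]$ where $(X_k)$ is a bi-FDD in the Banach lattice~$Y$ with bidecomposition constant~$M$, and let $P_n\colon[X_k]\to[X_k]$ be the canonical projections.

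The starting point is the bibasis inequality of Theorem~\ref{decomp}\eqref{decomp-ineq}, which, applied to $y=Tf$ for $f\in L_1[0,1]$, gives
\begin{displaymath}
  \Bignorm{\bigvee_{n=1}^{m}\abs{P_n(Tf)}}_Y
  \le M\norm{T}\norm{f}_1
  \qquad\text{for all }m\in\mathbb N.
\end{displaymath}
The plan is to pull this ``maximal function'' estimate back to $L_1[0,1]$ via $T^{-1}$ and thereby obtain a bounded dominated-approximation scheme on $L_1[0,1]$. As in~\cite{Gumenchuk:15}, such a structure is incompatible with the failure of Doob's inequality at $p=1$, yielding the contradiction.

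The technical difficulty is that $P_n(Tf)$ need not lie in $\Range T$, so $T^{-1}$ does not directly apply. To handle this, for each $f$ I would select $z_n=z_n(f)\in\Range T$ with $\sum_n\norm{z_n-P_n(Tf)}_Y<\infty$; then by Lemma~\ref{nconv-subseq}, along a subsequence, $z_{n_j}\to Tf$ uniformly in $Y$ and the supremum $\bigvee_j\abs{z_{n_j}}$ differs from $\bigvee_j\abs{P_{n_j}(Tf)}$ by a norm-small positive vector in~$Y$. Sequential uniform continuity of $T^{-1}$, via Proposition~\ref{uconv-oconv-seq}, then produces a majorant in $L_1[0,1]$ for $(T^{-1}(z_{n_j}))$ and forces $T^{-1}(z_{n_j})\to f$ uniformly in $L_1[0,1]$.

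The main obstacle is extracting from this construction a bounded sublinear maximal operator on $L_1[0,1]$ with constant independent of~$f$. Because the approximants $z_n(f)$ depend nonlinearly on~$f$, one must combine the bibasis inequality in~$Y$ (which is linear in $Tf$) with the sequential uniform continuity of $T^{-1}$ in a delicate way to achieve this. This is precisely the ``careful analysis'' of the Gumenchuk--Karlova--Popov proof alluded to in the statement, now enabled by the ambient-space independence of the bibasis inequality and the sequential characterizations of uniformly continuous operators established earlier in this paper.
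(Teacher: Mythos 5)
The paper does not actually write out a proof of this theorem; it asserts that the statement follows from a careful reworking of the argument of Theorem~5.1 in \cite{Gumenchuk:15} using Theorem~\ref{decomp}, Proposition~\ref{uconv-stable}, and Proposition~\ref{uconv-oconv-seq}. Your proposal assembles the same ingredients but does not close the argument, and the specific device you introduce to get around the fact that $P_n(Tf)\notin\Range T$ cannot work. If you replace the partial sums $P_n(Tf)$ by nearby vectors $z_n\in\Range T$ and pass to a subsequence $(n_j)$ with summable errors, then everything you subsequently derive --- that $z_{n_j}\goesu Tf$ in $Y$ and hence, by Proposition~\ref{uconv-oconv-seq}, that $T^{-1}z_{n_j}\goesu f$ in $L_1[0,1]$ --- is already true in an arbitrary Banach lattice with no bi-FDD in sight: one may simply take $z_{n_j}=Tf$ for every $j$, or invoke Lemma~\ref{nconv-subseq} for any norm-convergent sequence in $\Range T$. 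The content of the bi-FDD hypothesis lies precisely in the control of the maximal function over the \emph{full} sequence of partial sums,
\begin{math}
  \bignorm{\bigvee_{n=1}^m\abs{P_n(Tf)}}\le M\norm{T}\,\norm{f}_1,
\end{math}
and that information is discarded the moment you pass to an $f$-dependent subsequence with $f$-dependent approximants. Consequently no contradiction can be extracted from the scheme as written.

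The second gap is the final step. You assert that the resulting structure on $L_1[0,1]$ is ``incompatible with the failure of Doob's inequality at $p=1$,'' but that failure concerns the Haar (martingale) maximal operator, whereas the objects you would obtain by pulling back are of the form $f\mapsto T^{-1}(\text{approximant of }P_nTf)$, which are neither the conditional expectations nor even linear in~$f$. To reach a contradiction one must either exhibit a single $g\in L_1[0,1]$ whose Haar partial sums $(E_ng)$ fail to be order bounded and show that the hypotheses force $(E_ng)$ to be order bounded --- which requires applying $T^{-1}$ to the vectors $TE_ng$, which genuinely lie in $\Range T$, rather than to approximants of $P_nTg$ --- or run a Stein/Banach-principle type argument to get a uniform maximal bound. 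You supply neither, and you acknowledge as much when you flag the ``constant independent of~$f$'' as the unresolved obstacle. As it stands, the proposal restates the difficulty rather than resolving it.
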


\section{Unconditional and permutable decompositions}

Recall that a Schauder decomposition $(X_k)$ is unconditional if every
convergent series $\sum_{k=1}^\infty x_k$ with $x_k\in X_k$ converges
unconditionally; see \cite[p.~534]{Singer:81} or
\cite[1.g]{Lindenstrauss:77} for properties of unconditional
decompositions. By an \term{unconditional bidecomposition} we mean an
unconditional Schauder decomposition which is also a bidecomposition.

\begin{proposition}\label{unc-bid}
  A sequence of closed non-zero subspaces $(X_k)$ of a Banach lattice
  $X$ is an unconditional bidecomposition of $[X_k]$ iff there exists
  a constant $L$ such that
  \begin{equation}\label{unc-bid-ineq}
    \sup\limits_{\varepsilon_k=\pm 1}\biggnorm{\bigvee_{n=1}^m\Bigabs{
        \sum_{k=1}^n \varepsilon_kx_k}}
    \le L\Bignorm{\sum_{k=1}^mx_k}
  \end{equation}
  for any $m\in \mathbb{N}$ and any $x_1\in X_1,\dots,x_m\in X_m$.
\end{proposition}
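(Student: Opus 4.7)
The plan is to combine Theorem~\ref{decomp}\eqref{decomp-ineq} (the bidecomposition inequality) with the standard characterization of unconditional Schauder decompositions, which states that $(X_k)$ is unconditional iff there is a constant $K_u$ such that
\begin{math}
  \bignorm{\sum_{k=1}^m\varepsilon_kx_k}
  \le K_u\bignorm{\sum_{k=1}^mx_k}
\end{math}
for every $m$, all signs $\varepsilon_k=\pm 1$, and all $x_k\in X_k$ (see, e.g., \cite[\S 15]{Singer:81} or \cite[1.g]{Lindenstrauss:77}).

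For the forward direction, I suppose $(X_k)$ is an unconditional bidecomposition with bidecomposition constant $M$ and unconditional constant $K_u$. Given $m$ and $x_1\in X_1,\dots,x_m\in X_m$, I observe that $\varepsilon_kx_k\in X_k$ for every choice of signs, so the bidecomposition inequality applied to the sequence $(\varepsilon_kx_k)$ yields
\begin{displaymath}
  \Bignorm{\bigvee_{n=1}^m\Bigabs{\sum_{k=1}^n\varepsilon_kx_k}}
  \le M\Bignorm{\sum_{k=1}^m\varepsilon_kx_k}
  \le MK_u\Bignorm{\sum_{k=1}^mx_k}.
\end{displaymath}
Taking the supremum over $\varepsilon_k=\pm 1$ gives~\eqref{unc-bid-ineq} with $L=MK_u$.

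For the converse, I assume \eqref{unc-bid-ineq} holds. Specializing to $\varepsilon_k=1$ for all $k$ recovers \eqref{bidec-ineq} with constant $L$, so Theorem~\ref{decomp} guarantees that $(X_k)$ is a bidecomposition. Unconditionality is obtained by dropping all terms in the supremum except the one at $n=m$: since
\begin{displaymath}
  \Bigabs{\sum_{k=1}^m\varepsilon_kx_k}
  \le\bigvee_{n=1}^m\Bigabs{\sum_{k=1}^n\varepsilon_kx_k},
\end{displaymath}
taking norms and applying \eqref{unc-bid-ineq} yields $\bignorm{\sum_{k=1}^m\varepsilon_kx_k}\le L\bignorm{\sum_{k=1}^mx_k}$ for every choice of signs, which is exactly the unconditional inequality with $K_u\le L$. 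Hence $(X_k)$ is an unconditional bidecomposition. There is no substantive obstacle here: both implications are direct consequences of the two characterizations being combined.
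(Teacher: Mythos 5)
Your proof is correct and follows essentially the same route as the paper's: the forward direction applies the bidecomposition inequality to $(\varepsilon_k x_k)$ and then the unconditionality inequality, and the converse recovers both the bidecomposition inequality and the sign inequality directly from \eqref{unc-bid-ineq}. No issues.
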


\begin{proof}
  Suppose that $(X_k)$ is a bidecomposition. Then
  \begin{displaymath}
    \biggnorm{\bigvee_{n=1}^m\Bigabs{
        \sum_{k=1}^n \varepsilon_kx_k}}
    \le M\Bignorm{\sum_{k=1}^m\varepsilon_k x_k}
    \le MK_u\Bignorm{\sum_{k=1}^mx_k},
  \end{displaymath}
  where $M$ is the bidecomposition constant and $K_u$ is the unconditional
  constant of~$(X_k)$. Conversely, suppose that \eqref{unc-bid-ineq}
  is satisfied for any $x_k\in X_k$ as $k=1,\dots,m$. This clearly
  implies the bidecomposition inequality~\eqref{bidec-ineq}, hence $(X_k)$ is a
  bidecomposition. Furthermore,
  \begin{displaymath}
    \sup\limits_{\varepsilon_k=\pm 1}\Bignorm{
     \sum_{k=1}^m \varepsilon_kx_k}
    \le\sup\limits_{\varepsilon_k=\pm 1}\biggnorm{\bigvee_{n=1}^m\Bigabs{
     \sum_{k=1}^n \varepsilon_kx_k}}
    \le L\Bignorm{\sum_{k=1}^mx_k},
  \end{displaymath}
  hence $(X_k)$ is unconditional.
\end{proof}

It is easy to see that, analogously to the theory of unconditional
decompositions, the supremum over all choices of signs in
Proposition~\ref{unc-bid} may be replaced with the supremum over all
choices of $\delta_k\in \{0,1\}$ or the supremum over all $ \beta_k$
with $\abs{\beta_k}\le 1$.  However, this analogy breaks if we
consider permutations of the index set. Recall that a basic sequence
is unconditional iff every permutation of it is again a basic
sequence. Clearly, every permutation of an unconditional bibasic
sequence is an unconditional basic sequence. However, the following
example shows that the bibasis property may be lost after a
permutation.

\begin{example}
  \textit{A permutation of an unconditional bibasis need not be a
    bibasis:} Indeed, the Haar system $(h_k)$ in $L_p[0,1]$ is an
  unconditional bibasis when $1<p<\infty$. However, there is a
  function in $L_\infty[0,1]$ whose Haar series diverges a.e.,\ and,
  therefore, cannot converge in order, after a rearrangement of the
  series; see~\cite[p.~96]{Kashin:89}.  This shows that the
  corresponding rearrangement of $(h_k)$ fails to be a bibasis.
\end{example}

The preceding example motivates the following definition. A
bidecomposition is said to be \term{permutable} if every permutation
of it is a bidecomposition. Similarly, a bibasic sequence is
\term{permutable} if every permutation of it is bibasic. The preceding
example shows that the Haar system in $L_p[0,1]$ ($1<p<\infty$) fails
to be permutable. It is easy to see that permutability implies
unconditionality. 

If $(x_k)$ is an unconditional basic sequence, the supremum of the
basis constants over all permutations of $(x_k)$ is finite. We
establish a similar result for permutable decompositions, even though
the uniform boundedness principle is not applicable in this context.

\begin{theorem}\label{perm-const}
  Let $(X_k)$ be a permutable bidecomposition. The supremum of the
  bibasis constants over all permutations of $(X_k)$ is finite.
\end{theorem}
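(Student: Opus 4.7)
The plan is to follow the Baire category strategy from the proof of Theorem~\ref{decomp}, applied to a single seminorm that packages all permutations simultaneously. For $x\in[X_k]$ with expansion $x=\sum_k x_k$, $x_k\in X_k$, let $P_n^\pi$ denote the $n$-th canonical projection of the permuted bidecomposition $(X_{\pi(k)})$, and define
\[
  \phi(x)\;:=\;\sup_{\pi}\sup_{m}\biggnorm{\bigvee_{n=1}^{m}\Bigabs{P_n^\pi x}}\in[0,\infty].
\]
Then $\phi$ is a (possibly infinite-valued) seminorm on $[X_k]$, and it is lower semicontinuous because for each fixed pair $(\pi,m)$ the map $x\mapsto\|\bigvee_{n=1}^m|P_n^\pi x|\|$ is norm-continuous (by continuity of the canonical projections together with continuity of the lattice operations and of the norm), and a supremum of continuous functions is lsc.

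Granting for the moment that $\phi(x)<\infty$ for every $x\in[X_k]$, the conclusion follows by the same Baire category argument used in Theorem~\ref{decomp}. The closed sets $F_i:=\{x\in[X_k]:\phi(x)\le i\}$ cover $[X_k]$, so some $F_{i_0}$ has nonempty interior relative to $[X_k]$; the translation-and-scaling trick employed in the implication \eqref{decomp-nbdd}$\Rightarrow$\eqref{decomp-ineq} of Theorem~\ref{decomp} then yields a constant $C$ with $\phi\le C\|\cdot\|$. Applying Theorem~\ref{decomp}\eqref{decomp-ineq} to $(X_{\pi(k)})$ gives $M^\pi\le C$ for every permutation $\pi$, which is the desired uniform bound on the bibasis constants.

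The essential step, therefore, is to prove that $\phi(x)<\infty$ for every $x\in[X_k]$. For each individual $\pi$, permutability of $(X_k)$ and Theorem~\ref{decomp}\eqref{decomp-nbdd} applied to $(X_{\pi(k)})$ guarantee $\phi_\pi(x):=\sup_m\|\bigvee_{n=1}^m|P_n^\pi x|\|<\infty$; the nontrivial content is uniformity over the uncountable family of $\pi$. I would proceed by contradiction: if $\phi(x)=\infty$, one extracts permutations $\pi_j$ and indices $m_j$ with $\|\bigvee_{n=1}^{m_j}|P_n^{\pi_j}x|\|\to\infty$, observes that the $j$-th witness depends only on the finite index set $A_j:=\pi_j(\{1,\dots,m_j\})$, and attempts to splice these witnesses together into a single permutation $\pi^*$ with $\phi_{\pi^*}(x)=\infty$, contradicting permutability of $(X_k)$. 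The main obstacle, which I expect to require the bulk of the effort, is that the $A_j$'s may overlap in $\mathbb{N}$, so a direct concatenation is unavailable; the splicing must pass through a further extraction (e.g., choosing a subsequence of $(\pi_j)$ whose witnesses can be arranged on pairwise disjoint blocks of indices, or truncating each witness to an approximate version supported on a disjoint tail block, using the norm decay furnished by Lemma~\ref{nconv-subseq}).
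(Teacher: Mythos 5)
Your overall architecture (pointwise finiteness of $\phi$ followed by a Baire category upgrade) is viable, and it is genuinely different from the paper's proof, which never invokes Baire category here: the paper works directly with the bidecomposition constants $M^\sigma$ (already uniform over $x$ for each fixed $\sigma$ by \Cref{decomp}), assumes $\sup_\sigma M^\sigma=\infty$, and splices scale-invariant finite witnesses --- tuples $x_{k_1},\dots,x_{k_m}$ with $\bignorm{\bigvee_{n=1}^m\bigabs{\sum_{i=1}^n x_{k_i}}}\ge j\bignorm{\sum_{i=1}^m x_{k_i}}$ on pairwise disjoint index blocks --- into a single permutation that then fails \eqref{bidec-ineq}. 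Your Baire half is fine, as is the final passage from $\phi\le C\norm{\cdot}$ to a uniform bound on the constants (apply $\phi\le C\norm{\cdot}$ to $x=\sum_{i=1}^m y_i$ with $y_i\in X_{\pi(i)}$).

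The genuine gap is at the step you yourself flag as essential: the disjointification of the witnesses is not carried out, and the tools you gesture at are not the right ones --- in particular Lemma~\ref{nconv-subseq} plays no role, since there is no summable sequence of norms in sight. What closes the gap is the following observation: for a fixed $x=\sum_k x_k$, the quantity $\phi(x)$ is the supremum of $\bignorm{\bigvee_{n=1}^m\bigabs{\sum_{i=1}^n x_{k_i}}}$ over finite tuples of \emph{distinct} indices, and deleting from such a tuple the at most $N-1$ indices below $N$ changes this quantity by at most $\bignorm{\sum_{k<N}\abs{x_k}}$, by the triangle inequality applied to the finitely many fixed vectors $x_k$ with $k<N$. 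Hence if $\phi(x)=\infty$, the supremum restricted to tuples of indices $\ge N$ is also infinite for every $N$, so witnesses with norms tending to infinity can be chosen on pairwise disjoint blocks and concatenated into a single permutation $\pi^*$ (filling in the missed indices arbitrarily). Since the $j$-th block starts at some position $p_j$ and, by unconditionality of the decomposition (which permutability supplies), $P_n^{\pi^*}x$ really is the rearranged partial sum of the same components, one gets $\bigvee_{n=1}^{m_j}\bigabs{\sum_{i=1}^n x_{k^j_i}}\le 2\bigvee_{n=1}^{p_j+m_j}\abs{P_n^{\pi^*}x}$, contradicting \Cref{decomp}\eqref{decomp-nbdd} for $(X_{\pi^*(k)})$. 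This deletion argument is exactly the pointwise analogue of the paper's key inductive claim (that the supremum of the constants over permutations of $(X_k)_{k\ge N}$ is infinite for every $N$), which the paper proves with the same triangle-inequality trick together with the unconditional constant. So your plan can be completed, but as written its central lemma is missing.
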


\begin{proof}
  For the sake of contradiction, assume that the supremum is
  infinite. We claim, then, that the supremum of the bidecomposition
  constants over all permutations of $(X_k)_{k\ge 2}$ is also
  infinite. Suppose not.  Then there is a constant $M$ such that for
  any distinct $k_1,\dots,k_m$ with $k_i\ne 1$ and
  $x_{k_i}\in X_{k_i}$ for $i=1,\dots,m$ we have
  \begin{math}
    \Bignorm{\bigvee_{n=1}^m\bigabs{\sum_{i=1}^n x_{k_i}}}
      \le M\bignorm{\sum_{i=1}^mx_{k_i}}.
  \end{math}
  So if we take any distinct indices $k_1,\dots,k_m$ with $k_{i_0}=1$
  for some $i_0\in \{1,\dots,m\}$, and $x_{k_i}\in X_{k_i}$ as
  $i=1,\dots,m$ then
  \begin{multline*}
    \biggnorm{\bigvee_{n=1}^m\Bigabs{\sum_{i=1}^n x_{k_i}}}
    \le\biggnorm{\abs{x_{k_{i_0}}}+\bigvee_{n=1}^m
       \Bigabs{\sum_{i\in \{1,\dots,n\}\setminus \{i_0\}} x_{k_i}}}\\
    \le\norm{x_{k_{i_0}}}+M\Bignorm{\sum_{i\in\{1,\dots,m\}\setminus \{i_0\}}x_{k_i}}
    \le (K_u+MK_u)\Bignorm{\sum_{i=1}^mx_{k_i}},
  \end{multline*}
  where $K_u$ is the unconditional constant of~$(X_k)$. This
  contradicts the assumption, and, therefore, proves the claim.
  Proceeding inductively, we deduce that the supremum of the bidecomposition
  constants over all permutations of $(X_k)_{k\ge N}$ is infinite for
  every~$N$.

  Hence, we can find distinct indices
  $k^1_1,\dots,k^1_{m_1}$ and vectors $x_{k^1_1}\in X_{k^1_1}$, \dots,
  $x_{k^1_{m_1}}\in X_{k^1_{m_1}}$ such that
  \begin{displaymath}
    \biggnorm{\bigvee_{n=1}^{m_1}\Bigabs{\sum_{i=1}^n x_{k^1_i}}}
    \ge\Bignorm{\sum_{i=1}^{m_1}x_{k^1_i}}.
  \end{displaymath}
  Let $N_1=\max\{k^1_1,\dots,k^1_{m_1}\}$. Applying the previous
  paragraph to $(X_k)_{k>N_1}$, we find distinct
  $k^2_1,\dots,k^2_{m_2}>N_1$ and $x_{k^2_1}\in X_{k^2_1}$, \dots,
  $x_{k^2_{m_2}}\in X_{k^2_{m_2}}$ such that
  \begin{displaymath}
    \biggnorm{\bigvee_{n=1}^{m_2}\Bigabs{\sum_{i=1}^n x_{k^2_i}}}
    \ge 2\Bignorm{\sum_{i=1}^{m_2}x_{k^2_i}}.
  \end{displaymath}
  We then repeat the process in the obvious way; the elements of
  $\mathbb N$ that are missed we enumerate as
  $\l_1,l_2,\dots$. The sequence
  $k^1_1,\dots,k^1_{m_1},l_1,k^2_1,\dots,k^2_{m_2},l_2,\dots$ is a
  permutation of~$\mathbb N$, say,~$\sigma$, and it is clear that
  under this permutation, $(X_{\sigma(k)})$ fails the bidecomposition
  inequality and, hence, is not a bidecomposition.
\end{proof}

\begin{corollary}\label{perm-eq}
  Let $(X_k)$ be a sequence of closed non-zero subspaces of a Banach
  lattice~$X$. TFAE:
  \begin{enumerate}
  \item\label{perm-eq-perm} $(X_k)$ is a permutable bidecomposition of~$[X_k]$;
  \item\label{perm-eq-ineq} There is a constant $M$ such that for
    any sequence $(x_k)$ with $x_k\in X_k$ and any distinct indices
    $k_1,\dots,k_m$, we have
  \begin{displaymath}
    \biggnorm{\bigvee_{n=1}^m\Bigabs{\sum_{i=1}^n x_{k_i}}}
    \le M\Bignorm{\sum_{i=1}^mx_{k_i}}.
  \end{displaymath}
  \end{enumerate}
\end{corollary}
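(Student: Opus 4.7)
The corollary follows cleanly from Theorem~\ref{perm-const}, so the plan is just to unpack both implications.

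For (ii)$\Rightarrow$(i), I would first observe that (ii) says something stronger than (i). Given any permutation $\sigma$ of $\mathbb{N}$, and any $m$ and vectors $x_{\sigma(1)},\dots,x_{\sigma(m)}$ with $x_{\sigma(i)}\in X_{\sigma(i)}$, the choice of distinct indices $k_i=\sigma(i)$ in~(ii) yields exactly the bidecomposition inequality~\eqref{bidec-ineq} for the sequence $(X_{\sigma(k)})$ with constant~$M$. Hence $(X_{\sigma(k)})$ satisfies Theorem~\ref{decomp}\eqref{decomp-ineq}, which, together with the Schauder decomposition property (which also follows from~(ii) since \eqref{bidec-ineq} implies \eqref{decomp-basis-constant}), makes $(X_{\sigma(k)})$ a bidecomposition. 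Since this holds for every~$\sigma$, $(X_k)$ is permutable.

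For (i)$\Rightarrow$(ii), I would invoke Theorem~\ref{perm-const} to obtain a uniform bound $M:=\sup_\sigma M_\sigma<\infty$, where $M_\sigma$ is the bidecomposition constant of~$(X_{\sigma(k)})$. Given distinct indices $k_1,\dots,k_m$ and vectors $x_{k_i}\in X_{k_i}$, choose any permutation $\sigma$ of~$\mathbb{N}$ with $\sigma(i)=k_i$ for $i=1,\dots,m$ (simply enumerate the complement $\mathbb{N}\setminus\{k_1,\dots,k_m\}$ as $\sigma(m+1),\sigma(m+2),\dots$). Applying the bidecomposition inequality for $(X_{\sigma(k)})$ to the finite sequence $(x_{k_1},\dots,x_{k_m})$ gives
\begin{displaymath}
  \biggnorm{\bigvee_{n=1}^m\Bigabs{\sum_{i=1}^n x_{k_i}}}
  \le M_\sigma\Bignorm{\sum_{i=1}^m x_{k_i}}
  \le M\Bignorm{\sum_{i=1}^m x_{k_i}},
\end{displaymath}
which is~(ii).

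There is no real obstacle: the whole content of the corollary sits inside Theorem~\ref{perm-const}, and the only thing to verify is that the inequality~(ii) is just the bidecomposition inequality for the finite rearranged initial segment, phrased in a permutation-free way.
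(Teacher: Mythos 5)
Your proposal is correct and follows essentially the same route as the paper: both directions reduce to extending the distinct indices $k_1,\dots,k_m$ to a permutation $\sigma$ of $\mathbb N$, with Theorem~\ref{perm-const} supplying the uniform constant for (i)$\Rightarrow$(ii). Your added remark that~(ii) also forces the Schauder decomposition property via the corollary after Theorem~\ref{decomp} is a small but accurate piece of bookkeeping that the paper leaves implicit.
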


\begin{proof}
  \eqref{perm-eq-perm}$\Rightarrow$\eqref{perm-eq-ineq} Let $M$ be the supremum of the bidecomposition constants, guaranteed to be finite by Theorem~\ref{perm-const}. Choose a permutation $\sigma$ with
  $\sigma(i)=k_i$ as $i=1,\dots,m$. The bidecomposition inequality
  for $(X_{\sigma(k)})$ yields \eqref{perm-eq-ineq}.
  
  \eqref{perm-eq-ineq}$\Rightarrow$\eqref{perm-eq-perm} 
  Let $\sigma$ be a permutation. Applying \eqref{perm-eq-ineq} with
  $m\in\mathbb N$ and $k_i=\sigma(i)$ as $i=1,\dots,m$, we conclude
  that  $(X_{\sigma(k)})$ satisfies the bidecomposition inequality, hence is a
  bidecomposition, which yields \eqref{perm-eq-perm}.
\end{proof}

\section{Absolute decompositions}

Let $(X_k)$ be a permutable bidecomposition, and let $P_n^\sigma$
denote the $n$-th canonical projection associated to the
permutation~$\sigma$. By \Cref{decomp}\eqref{decomp-obdd}, for each
$x\in [X_k]$ there exists $u^\sigma\in X_+$ such that
$\abs{P_n^\sigma x}\le u^\sigma$ for all~$n$. Motivated by
\Cref{perm-const}, it is natural to wonder if one can choose
$u^\sigma$ independent of~$\sigma$. It turns out that one cannot; to
do so one must further modify the basis inequality. This leads to the
following definition, which is of interest in its own right.

\begin{definition}
  Let $(X_k)$ be a sequence of closed non-zero subspaces of a Banach
  lattice~$X$. We say that $(X_k)$ is an \term{absolute decomposition}
  of $[X_k]$ if there exists a constant $A\ge 1$ such that for any
  $m\in \mathbb{N}$ and any $x_1\in X_1,\dots,x_m\in X_m$,
 \begin{equation}\label{abs-ineq}
   \Bignorm{\sum_{k=1}^m\abs{x_k}}
   \le A\Bignorm{\sum_{k=1}^mx_k}.
 \end{equation}
 In particular, a sequence $(x_k)$ is \term{absolute} if there exists a
 constant $A\ge 1$ such that for any $m\in\mathbb N$ and any
 $\alpha_1,\dots,\alpha_m$ we have
 \begin{math}
   \bignorm{\sum_{k=1}^m\abs{\alpha_k x_k}}
   \le A\bignorm{\sum_{k=1}^m\alpha_k x_k}.
 \end{math}
\end{definition}

It is clear that every absolute decomposition is a Schauder
decomposition. Every permutation of an absolute decomposition
satisfies the bidecomposition inequality; it follows that every
absolute decomposition is permutable and, therefore, unconditional.
Moreover, one can easily check that the absolute property is stable
under permutation.

We next prove an absolute version of \Cref{decomp}. Recall that for
any sequence $(x_k)$ it follows from
\begin{math}
  \bignorm{\sum_{k=n}^mx_k}\le\bignorm{\sum_{k=n}^m\abs{x_k}}
\end{math}
whenever $n\le m$ that if $\sum_{k=1}^\infty\abs{x_k}$ converges then
$\sum_{k=1}^\infty x_k$ converges. 

\begin{theorem}\label{abs}
  Let $X$ be a Banach lattice and $(X_k)\subseteq X$ a Schauder
  decomposition of~$[X_k]$. TFAE:
  \begin{enumerate}
  \item\label{abs-const} $(X_k)$ is absolute;
  \item\label{abs-abs} The convergence of $\sum_{k=1}^\infty x_k$
    implies the convergence of $\sum_{k=1}^\infty\abs{x_k}$ for every
    sequence $(x_k)$ with $x_k\in X_k$;
  \item\label{abs-obdd} If $\sum_{k=1}^\infty x_k$ converges then
    the sequence of partial sums
    $\bigl(\sum_{k=1}^m\abs{x_k}\bigr)_m$ is order bounded for all
    $x_k\in X_k$;
  \item\label{abs-norm-bdd} If $\sum_{k=1}^\infty x_k$ converges then
    the sequence of partial sums
    $\bigl(\sum_{k=1}^m\abs{x_k}\bigr)_m$ is norm bounded for all
    $x_k\in X_k$;
  \end{enumerate}
\end{theorem}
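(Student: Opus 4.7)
The overall strategy is to mirror the structure of the proof of Theorem~\ref{decomp}: the easy chain \eqref{abs-const}$\Rightarrow$\eqref{abs-abs}$\Rightarrow$\eqref{abs-obdd}$\Rightarrow$\eqref{abs-norm-bdd} should be quick, and then the essential step \eqref{abs-norm-bdd}$\Rightarrow$\eqref{abs-const} goes through a Baire category argument on $[X_k]$.

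For \eqref{abs-const}$\Rightarrow$\eqref{abs-abs}: given that $\sum x_k$ converges, the absolute inequality applied to the block $x_n,\dots,x_m$ gives
\begin{displaymath}
  \Bignorm{\sum_{k=n}^m\abs{x_k}} \le A\Bignorm{\sum_{k=n}^m x_k},
\end{displaymath}
so the partial sums of $\sum\abs{x_k}$ are Cauchy and thus norm convergent. For \eqref{abs-abs}$\Rightarrow$\eqref{abs-obdd}: if $\sum\abs{x_k}$ converges in norm to some $u\in X_+$, then since the partial sums are increasing and $X$ is Archimedean, $u$ dominates them all. The implication \eqref{abs-obdd}$\Rightarrow$\eqref{abs-norm-bdd} is immediate.

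The crux is \eqref{abs-norm-bdd}$\Rightarrow$\eqref{abs-const}. Let $P_n$ be the canonical projections of $[X_k]$, and put $Q_k = P_k - P_{k-1}$ (with $P_0 = 0$), which is a continuous linear map $[X_k]\to X_k$. For each $i\in\mathbb{N}$, define
\begin{displaymath}
  F_i = \biggl\{x\in[X_k]\mid \sup_m\Bignorm{\sum_{k=1}^m\abs{Q_k x}}\le i\biggr\}
    = \bigcap_{m=1}^\infty\biggl\{x\in[X_k]\mid \Bignorm{\sum_{k=1}^m\abs{Q_k x}}\le i\biggr\}.
\end{displaymath}
Each $F_i$ is closed by continuity of $Q_k$, the lattice operations, and the norm, and \eqref{abs-norm-bdd} gives $\bigcup_i F_i = [X_k]$. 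Baire's theorem yields $i_0$, $x_0\in F_{i_0}$, and $\varepsilon > 0$ such that $F_{i_0}$ contains the ball of radius $\varepsilon$ around $x_0$ in $[X_k]$.

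Given $x\in[X_k]$ with $\norm{x}\le 1$, the triangle inequality applied coordinate-wise gives $\varepsilon\abs{Q_k x} \le \abs{Q_k x_0} + \abs{Q_k(x_0 + \varepsilon x)}$ for every $k$; summing over $k=1,\dots,m$ and taking norms yields
\begin{displaymath}
  \varepsilon\Bignorm{\sum_{k=1}^m\abs{Q_k x}} \le 2 i_0,
\end{displaymath}
so $\|\sum_{k=1}^m|Q_k x|\|\le\frac{2i_0}{\varepsilon}\norm{x}$ for all $x\in[X_k]$ and all $m$. Finally, given $m$ and $x_1\in X_1,\dots,x_m\in X_m$, set $x=\sum_{k=1}^m x_k$, so $Q_k x = x_k$ for $k\le m$, and we obtain the absolute inequality \eqref{abs-ineq} with $A = \frac{2i_0}{\varepsilon}$.

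I do not expect a significant obstacle: the argument is a direct transcription of the Baire category portion of the proof of Theorem~\ref{decomp}, with $\bigvee_{n=1}^m\abs{P_n x}$ replaced by $\sum_{k=1}^m\abs{Q_k x}$. The only point requiring a small amount of care is ensuring that each $F_i$ is a countable intersection of closed sets, which is why we use continuity of $Q_k$ rather than trying to work with the $P_n$ directly.
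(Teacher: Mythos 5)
Your proof is correct and follows essentially the same route as the paper: the same Cauchy argument for \eqref{abs-const}$\Rightarrow$\eqref{abs-abs}, and for \eqref{abs-norm-bdd}$\Rightarrow$\eqref{abs-const} the same Baire category argument on the closed sets $F_i$ (the paper writes $\varphi_m(x)=\abs{P_1x}+\abs{P_2x-P_1x}+\dots+\abs{P_mx-P_{m-1}x}$, which is exactly your $\sum_{k=1}^m\abs{Q_kx}$, and then declares the rest analogous to Theorem~\ref{decomp}). You have simply written out in full the step the paper leaves as an analogy, and done so correctly.
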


\begin{proof}
  \eqref{abs-const}$\Rightarrow$\eqref{abs-abs}: Let $(x_k)$ be such
  that $x_k\in X_k$ for every $k$ and $\sum_{k=1}^\infty x_k$
  converges. Then for each $n\le m$ the absolute
  inequality~\eqref{abs-ineq} yields that
  \begin{displaymath}
    \Bignorm{\sum_{k=n}^m\abs{x_k}}
    \le A\Bignorm{\sum_{k=n}^mx_k}.
   \end{displaymath}
   Hence, $\bigl(\sum_{k=1}^m\abs{x_k}\bigr)_m$ is Cauchy, so that
   $\sum_{k=1}^\infty\abs{x_k}$ converges.



  \eqref{abs-abs}$\Rightarrow$\eqref{abs-obdd}$\Rightarrow$\eqref{abs-norm-bdd}
  trivially.

  \eqref{abs-norm-bdd}$\Rightarrow$\eqref{abs-const}:
  For every $m$ and $x=\sum_{k=1}^\infty x_k$, where $x_k\in X_k$ for
  every~$k$, we define
  \begin{displaymath}
    \varphi_m(x)=\sum_{k=1}^m\abs{x_k}
    =\abs{P_1x}+\abs{P_2x-P_1x}+\dots+\abs{P_mx-P_{m-1}x}.
  \end{displaymath}
  It is clear that $\varphi_m$ is continuous, so that the set
  \begin{displaymath}
    F_i=\Bigl\{x\in[X_k]\mid
    \forall m\in\mathbb N\ \Bignorm{\sum_{k=1}^m\abs{x_k}}\le i\Bigr\}
    =\bigcap_{m=1}^\infty\bigl\{x\in[X_k]\mid \bignorm{\varphi_m(x)}\le i\bigr\}
  \end{displaymath}
  is closed for every $i\in\mathbb N$. The rest of the proof is
  analogous to that of
  \eqref{decomp-nbdd}$\Rightarrow$\eqref{decomp-ineq} in Theorem~\ref{decomp}.
\end{proof}

\begin{remark}
  Since the sequence of the partial sums
  $\bigl(\sum_{k=1}^m\abs{x_k}\bigr)$ in~\eqref{abs-abs} is
  increasing, one easily sees that the convergence is, in fact,
  uniform.
\end{remark}

Recall the following standard fact:

\begin{lemma}\label{2unc}
  For any vectors $x_1,\dots,x_m$ in an Archimedean vector lattice we
  have
  \begin{enumerate}
  \item\label{2unc-signs}
  \begin{math}
    \sum_{k=1}^m\abs{x_k}
    =\sup\sum_{k=1}^m\varepsilon_k x_k
    =\sup\Bigabs{\sum_{k=1}^m\varepsilon_k x_k},
  \end{math}
  where the supremum is taken over all choices of signs $\varepsilon_k=\pm
  1$;
  \item\label{2unc-perm}
  \begin{math}
    \sum_{k=1}^m\abs{x_k}\le 2\sup\bigabs{\sum_{i=1}^nx_{k_i}},
  \end{math}
  where the supremum is taken over all choices of $n\le m$ and $1\le
  k_1<\dots<k_n\le m$. 
  \end{enumerate}  
\end{lemma}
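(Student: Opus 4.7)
For part~\eqref{2unc-signs}, I would start from the pointwise identity $\abs{x_k} = x_k \vee (-x_k)$ and iterate the translation-invariance identity $(a \vee b) + c = (a + c) \vee (b + c)$, which holds in any vector lattice. By induction on $m$ this yields
\begin{displaymath}
  \sum_{k=1}^m \abs{x_k} = \bigvee_{\varepsilon \in \{\pm 1\}^m} \sum_{k=1}^m \varepsilon_k x_k,
\end{displaymath}
giving the first equality. For the second equality, I would exploit the symmetry $\varepsilon \mapsto -\varepsilon$ of $\{\pm 1\}^m$: this involution shows that $\sup_\varepsilon \sum_k \varepsilon_k x_k = \sup_\varepsilon \bigl(-\sum_k \varepsilon_k x_k\bigr)$, so combining with $\abs{a} = a \vee (-a)$ yields $\sup_\varepsilon \bigabs{\sum_k \varepsilon_k x_k} = \sup_\varepsilon \sum_k \varepsilon_k x_k$.

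For part~\eqref{2unc-perm}, I would use~\eqref{2unc-signs} to reduce the problem to signed sums. Let $M$ denote the supremum on the right-hand side of the inequality in~\eqref{2unc-perm}. Given any $\varepsilon \in \{\pm 1\}^m$, I split the index set into $S_+ = \{k : \varepsilon_k = +1\}$ and $S_- = \{k : \varepsilon_k = -1\}$ and estimate
\begin{displaymath}
  \sum_{k=1}^m \varepsilon_k x_k = \sum_{k \in S_+} x_k - \sum_{k \in S_-} x_k \le \Bigabs{\sum_{k \in S_+} x_k} + \Bigabs{\sum_{k \in S_-} x_k} \le 2M,
\end{displaymath}
where the last inequality holds because each of the two terms is a subset sum of the $x_k$'s that can be reindexed in strictly increasing order, hence its absolute value is bounded by $M$. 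Taking the supremum over $\varepsilon$ and invoking~\eqref{2unc-signs} completes the proof.

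I expect no real obstacle. The only nontrivial step is the distributivity identity used in part~\eqref{2unc-signs}, but this is a standard consequence of translation invariance in a vector lattice and requires no special assumption beyond the Riesz axioms; the archimedean hypothesis in the statement is inherited from the paper's standing setting and is not invoked in the argument.
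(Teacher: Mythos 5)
Your proof is correct, but it takes a genuinely different route from the paper's. The paper disposes of both parts in one line by the transfer principle: any vector lattice identity or inequality in finitely many variables that holds in $\mathbb R$ holds in every (Archimedean) vector lattice, so it suffices to check the statements for real numbers. You instead verify everything intrinsically: the translation identity $(a\vee b)+c=(a+c)\vee(b+c)$, iterated inductively, gives $\sum_k\abs{x_k}=\bigvee_\varepsilon\sum_k\varepsilon_kx_k$; the involution $\varepsilon\mapsto-\varepsilon$ gives the second equality in~\eqref{2unc-signs}; and splitting each sign pattern into $S_+$ and $S_-$ gives~\eqref{2unc-perm} (the only point worth making explicit is that when $S_+$ or $S_-$ is empty the corresponding sum is $0\le M$, which holds since $M$ is a finite supremum of moduli). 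Your approach buys self-containedness and, as you correctly note, shows that the Archimedean hypothesis is not actually needed --- these are finitary lattice identities valid in every vector lattice; the paper's approach buys brevity and scales effortlessly to any other such identity one might need. Both are standard and both are fine.
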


\begin{proof}
  These statements hold for real numbers and, therefore, for
  elements of every Archimedean vector lattice.
\end{proof}

\Cref{abs} immediately yields the characterization of absolute
decompositions that motivated this section:

\begin{proposition}\label{abs-perm}
  Let $X$ be a Banach lattice and $(X_k)$ a bidecomposition in~$X$. TFAE:
  \begin{enumerate}
  \item $(X_k)$ is absolute;
  \item for each $x\in [X_k]$ there exists $u\ge 0$ such that
    $\abs{P_n^\sigma x}\le u$ for all $n\in\mathbb N$ and all
    permutations~$\sigma$.
  \end{enumerate}   
\end{proposition}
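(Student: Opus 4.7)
The plan is to deduce both implications directly from the machinery already developed, namely \Cref{abs} and \Cref{2unc}\eqref{2unc-perm}. The only interpretive point to settle first is that in condition (ii) I read $P_n^\sigma x$ as the finite sum $\sum_{i=1}^n x_{\sigma(i)}$ formed from the unique decomposition $x=\sum_k x_k$ relative to~$(X_k)$, since the permuted sequence $(X_{\sigma(k)})$ need not \emph{a priori} be a Schauder decomposition.

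For (i)$\Rightarrow$(ii), absoluteness together with \Cref{abs}\eqref{abs-abs} ensures that $\sum_{k=1}^\infty\abs{x_k}$ converges in norm for every $x=\sum_k x_k\in[X_k]$. I would set $u:=\sum_{k=1}^\infty\abs{x_k}$. Because the partial sums $\sum_{k=1}^N\abs{x_k}$ are increasing in $k$ and converge in norm to~$u$, and because the positive cone of~$X$ is norm closed, one obtains $\sum_{k=1}^N\abs{x_k}\le u$ for every~$N$, and hence $\sum_{k\in F}\abs{x_k}\le u$ for every finite $F\subseteq\mathbb N$. The triangle inequality then yields $\bigabs{P_n^\sigma x}\le\sum_{i=1}^n\abs{x_{\sigma(i)}}\le u$ uniformly in $n$ and~$\sigma$.

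For (ii)$\Rightarrow$(i), I would aim for the order-boundedness criterion \Cref{abs}\eqref{abs-obdd}. Fix $x=\sum_k x_k\in[X_k]$ and let $u$ be the upper bound supplied by (ii). Given $m\in\mathbb N$ and indices $1\le k_1<\dots<k_n\le m$, extend them to a permutation $\sigma$ of $\mathbb N$ with $\sigma(i)=k_i$ for $i=1,\dots,n$; then $P_n^\sigma x=\sum_{i=1}^n x_{k_i}$, so $\Bigabs{\sum_{i=1}^n x_{k_i}}\le u$. Applying \Cref{2unc}\eqref{2unc-perm} gives
\begin{displaymath}
  \sum_{k=1}^m\abs{x_k}\le 2\sup\Bigabs{\sum_{i=1}^n x_{k_i}}\le 2u,
\end{displaymath}
where the supremum is taken over the choices of indices allowed in \Cref{2unc}\eqref{2unc-perm}. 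Thus the partial sums of $\sum_k\abs{x_k}$ are order bounded by~$2u$, and \Cref{abs} concludes that $(X_k)$ is absolute.

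There is no substantial obstacle here; once the reading of $P_n^\sigma x$ is fixed, each direction is essentially a one-line consequence of the preceding results. The only point where care is required is the dominated-subsum step in the forward direction, which rests on the norm-closedness of the positive cone and on \Cref{abs}\eqref{abs-abs} promoting the absolute bibasic inequality to genuine norm convergence of $\sum_k\abs{x_k}$.
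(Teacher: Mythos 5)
Your proposal is correct and follows essentially the same route as the paper: take $u=\sum_{k=1}^\infty\abs{x_k}$ for the forward direction, and use Lemma~\ref{2unc}\eqref{2unc-perm} together with Theorem~\ref{abs}\eqref{abs-obdd} for the converse. The extra details you supply (norm-closedness of the cone for the dominated-subsum step, and the finite-sum reading of $P_n^\sigma x$) are exactly the points the paper leaves implicit.
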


\begin{proof}
  Suppose $(X_k)$ is absolute and take
  $x=\sum_{k=1}^\infty x_k\in [X_k]$.  It is clear that
  $u:=\sum_{k=1}^\infty\abs{x_k}$ is as required;
  \Cref{decomp}\eqref{decomp-obdd} is one way to see that $(X_k)$ is
  permutable.

  To prove the converse, let $x\in [X_k]$ and find $u$ as in the
  statement. Lemma~\ref{2unc}\eqref{2unc-perm} yields that for
  each~$m$,
  \begin{math}
    \sum_{k=1}^m\abs{x_k}\le 2u.
  \end{math}
  Thus, $(X_k)$ is absolute by Theorem~\ref{abs}\eqref{abs-obdd}.
\end{proof}

There are several other natural ways to motivate the concepts of an
absolute decomposition and an absolute sequence. In view of
Lemma~\ref{2unc}\eqref{2unc-signs}, the
absolute inequality~\eqref{abs-ineq} may be viewed as the
unconditional inequality
\begin{math}
  \sup\limits_{\varepsilon_k=\pm 1}
  \bignorm{\sum_{k=1}^m\varepsilon_kx_k}
  \le K_u\bignorm{\sum_{k=1}^mx_k}
\end{math}
with the supremum pulled inside the norm:
\begin{math}
  \bignorm{\sup\limits_{\varepsilon_k=\pm 1}\sum_{k=1}^m\varepsilon_kx_k}
  \le A\bignorm{\sum_{k=1}^mx_k}.
\end{math}
On the other hand, it is easy to see that a normalized basic sequence
$(x_k)$ in a Banach space is equivalent to the unit vector basis of
$\ell_1$ iff the convergence of $\sum_{k=1}^\infty\alpha_kx_k$ is
equivalent to the convergence of
$\sum_{k=1}^\infty\norm{\alpha_kx_k}$. Replacing norm with modulus, we
obtain the definition of an absolute sequence.

We know that ``absolute'' $\Rightarrow$ ``permutable'' $\Rightarrow$
``unconditional''. We will now list several cases where the three
concepts are equivalent.

\begin{remark}
  For positive basic sequences, being absolute is equivalent to being
  unconditional. Indeed, let $(x_k)$ be a positive unconditional basic
  sequence.  Fix $\alpha_1,\dots,\alpha_m$. Then
  \begin{displaymath}
    \Bignorm{\sum_{k=1}^m\abs{\alpha_kx_k}}
        =\Bignorm{\sum_{k=1}^m\abs{\alpha_k}x_k}
        \le K_u\Bignorm{\sum_{k=1}^m \alpha_kx_k},
   \end{displaymath}
   where $K_u$ is the unconditional constant of~$(x_k)$.   
\end{remark}

\begin{proposition}
  A Schauder decomposition in an AM-space is absolute iff it is
  unconditional. In particular, a basic sequence in an AM-space is
  absolute iff it is unconditional.
\end{proposition}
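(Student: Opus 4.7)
The forward direction is already known: absolute implies permutable implies unconditional, as noted just before the proposition. So the plan is to prove the converse; suppose $(X_k)$ is an unconditional Schauder decomposition in an AM-space $X$, with unconditional constant $K_u$, and show that it is absolute with constant $A = K_u$.

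The key idea is to combine the identity from Lemma~\ref{2unc}\eqref{2unc-signs} with the defining property of AM-spaces. Fix $m\in\mathbb N$ and $x_k\in X_k$ for $k=1,\dots,m$. By Lemma~\ref{2unc}\eqref{2unc-signs},
\begin{displaymath}
  \sum_{k=1}^m\abs{x_k}
  =\bigvee_{\varepsilon\in\{\pm 1\}^m}\Bigabs{\sum_{k=1}^m\varepsilon_kx_k},
\end{displaymath}
where the join is over the finite set of $2^m$ sign choices and each term on the right is a positive element of $X$.

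Now I invoke the AM-property: in an AM-space, $\norm{a\vee b}=\max\{\norm{a},\norm{b}\}$ for positive $a,b$, and hence $\norm{\bigvee_i a_i}=\max_i\norm{a_i}$ for any finite family of positive elements. Applying this to the display above gives
\begin{displaymath}
  \Bignorm{\sum_{k=1}^m\abs{x_k}}
  =\max_{\varepsilon\in\{\pm 1\}^m}\Bignorm{\sum_{k=1}^m\varepsilon_kx_k}
  \le K_u\Bignorm{\sum_{k=1}^m x_k},
\end{displaymath}
where the last inequality is the unconditional inequality for the decomposition $(X_k)$. This is exactly the absolute inequality \eqref{abs-ineq} with $A=K_u$, and the "in particular" statement follows by specializing to the one-dimensional subspaces $X_k=\Span x_k$.

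There is no real obstacle here; the whole argument is a two-line computation once the right identity from Lemma~\ref{2unc} is combined with the AM-norm property. The only thing to be a little careful about is that the join in Lemma~\ref{2unc}\eqref{2unc-signs} is finite, so that the AM-identity $\norm{\bigvee a_i}=\max\norm{a_i}$ applies without any completeness or closure issues.
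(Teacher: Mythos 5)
Your argument is correct and is essentially identical to the paper's proof: both apply Lemma~\ref{2unc}\eqref{2unc-signs} to write $\sum_{k=1}^m\abs{x_k}$ as a finite supremum of $\bigabs{\sum_{k=1}^m\varepsilon_kx_k}$ over sign choices, use the AM-property to pull the norm inside that supremum, and finish with the unconditional inequality to obtain the absolute constant $A=K_u$.
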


\begin{proof}
  Let $(X_k)$ be an unconditional Schauder
  decomposition with unconditional constant~$K_u$. Then for
  $x_1\in X_1,\dots,x_m\in X_m$ Lemma~\ref{2unc}\eqref{2unc-signs}
  yields
  \begin{displaymath}
    \Bignorm{\sum_{k=1}^m\abs{x_k}}
    =\biggnorm{\sup\limits_{\varepsilon_k=\pm 1}
      \Bigabs{\sum_{k=1}^m\varepsilon_kx_k}}
    =\sup\limits_{\varepsilon_k=\pm 1}
      \Bignorm{\sum_{k=1}^m\varepsilon_kx_k}
    \le K_u\Bignorm{\sum_{k=1}^m x_k}.
  \end{displaymath}

\end{proof}

\begin{proposition}\label{l1-abs}
  Every basic sequence in a Banach lattice which is equivalent to the
  unit vector basis of $\ell_1$ is absolute. In particular, a basis of
  $\ell_1$ is absolute iff it is unconditional.
\end{proposition}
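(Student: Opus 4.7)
The plan is to exploit the fact that equivalence to the unit vector basis of $\ell_1$ gives two-sided control by the $\ell_1$-norm, and that the absolute inequality for $\ell_1$-type series reduces to the triangle inequality in the Banach lattice.

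Concretely, suppose $(x_k)$ is equivalent to the unit vector basis of $\ell_1$, so there are constants $c, C > 0$ with
\begin{displaymath}
  c\sum_{k=1}^m\abs{\alpha_k}
  \le\Bignorm{\sum_{k=1}^m\alpha_k x_k}
  \le C\sum_{k=1}^m\abs{\alpha_k}
\end{displaymath}
for all scalars $\alpha_1,\dots,\alpha_m$. First I would observe that setting $\alpha_j=\delta_{jk}$ in the upper estimate gives $\norm{x_k}\le C$ for every~$k$. Next, applying the triangle inequality in the Banach lattice to the positive vectors $\abs{\alpha_k}\abs{x_k}$ yields
\begin{displaymath}
  \Bignorm{\sum_{k=1}^m\abs{\alpha_k x_k}}
  \le\sum_{k=1}^m\abs{\alpha_k}\norm{x_k}
  \le C\sum_{k=1}^m\abs{\alpha_k}.
\end{displaymath}
Finally, the lower estimate converts the right-hand side back to a norm of the signed sum, producing $\bignorm{\sum_{k=1}^m\abs{\alpha_k x_k}}\le\frac{C}{c}\bignorm{\sum_{k=1}^m\alpha_k x_k}$, which is exactly the absolute inequality with $A=C/c$.

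For the ``in particular'' claim, the forward direction is immediate since every absolute sequence is unconditional. For the converse, I would invoke the classical fact that every normalized unconditional basis of $\ell_1$ is equivalent to the standard unit vector basis (this is a standard consequence of the perfectly homogeneous basis theorem; see, e.g., \cite[Theorem~2.a.9]{Lindenstrauss:77}). A general (not necessarily normalized) unconditional basis has terms whose norms are bounded above and below away from $0$ (the lower bound coming from unconditionality together with boundedness of the coordinate functionals), so after normalization the same conclusion applies, and the first part of the proposition then gives that the basis is absolute.

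The main obstacle, such as it is, lies only in the second statement: one needs to cite the right classical uniqueness theorem for unconditional bases of $\ell_1$. The first statement is essentially a one-line computation once one notices that $\ell_1$-equivalence forces $\norm{x_k}$ to be bounded, so the triangle inequality inside the lattice already controls $\bignorm{\sum\abs{\alpha_k x_k}}$ by the $\ell_1$-norm of the coefficients.
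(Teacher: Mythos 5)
Your proof is correct and follows essentially the same route as the paper: bound $\bignorm{\sum\abs{\alpha_kx_k}}$ by $C\sum\abs{\alpha_k}$ via the triangle inequality and seminormalization, then convert back using the lower $\ell_1$-estimate, and for the ``in particular'' part invoke the uniqueness (up to equivalence) of the normalized unconditional basis of $\ell_1$ (the paper cites Proposition~2.b.9 of Lindenstrauss--Tzafriri rather than Theorem~2.a.9). One small inaccuracy: your parenthetical claim that an arbitrary unconditional basis of $\ell_1$ is automatically seminormalized is false (e.g., $(ke_k)$ is an unconditional basis of $\ell_1$), but this is harmless because the absolute inequality is invariant under rescaling each term individually, so absoluteness of the normalized basis transfers to the original one --- which is exactly how the paper closes the argument.
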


\begin{proof}
  Suppose that $(x_k)$ is a basic sequence in a Banach lattice such
  that $(x_k)$ is $M$-equivalent to the unit vector basis $(e_k)$
  of~$\ell_1$. In particular, $(x_k)$ is seminormalized, so that there
  exists $C>0$ such that $\norm{x_k}\le C$ for every~$k$. For every
  $\alpha_1,\dots,\alpha_m$, we have
  \begin{displaymath}
    \Bignorm{\sum_{k=1}^m\abs{\alpha_kx_k}}
    \le C\sum_{k=1}^m\abs{\alpha_k}
    =C\Bignorm{\sum_{k=1}^m\alpha_ke_k}_{\ell_1}
    \le CM\Bignorm{\sum_{k=1}^m\alpha_kx_k}.
  \end{displaymath}
  Up to equivalence, $\ell_1$ has only one normalized unconditional
  basis; see~\cite[Proposition~2.b.9]{Lindenstrauss:77}. Hence, given
  any unconditional basis $(x_k)$ of~$\ell_1$,
  $\bigl(\frac{x_k}{\norm{x_k}}\bigr)$ is equivalent to the unit
  vector basis of $\ell_1$ and, therefore, is absolute. It follows
  that $(x_k)$ is absolute.
\end{proof}

\begin{question}
  Does there exist a Banach lattice with a bibasis but no conditional
  bibasis?
\end{question}

\begin{proposition}
  Suppose that $(x_k)$ is an absolute basic sequence. If
  $\bigl(\abs{x_k}\bigr)$ is a basic sequence then it is dominated by~$(x_k)$.
\end{proposition}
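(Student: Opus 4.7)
The plan is to produce a constant $C$ such that
\begin{displaymath}
  \Bignorm{\sum_{k=1}^m\alpha_k\abs{x_k}}
  \le C\Bignorm{\sum_{k=1}^m\alpha_k x_k}
\end{displaymath}
for every choice of scalars $\alpha_1,\dots,\alpha_m$; this is exactly the condition that $(\abs{x_k})$ is dominated by $(x_k)$, in the sense that the map $x_k\mapsto\abs{x_k}$ extends to a bounded linear operator from $[x_k]$ into $[\abs{x_k}]$. The hypothesis that $(\abs{x_k})$ is a basic sequence enters only in interpreting the target expansion (so that the extension actually lives inside $[\abs{x_k}]$ and the notion of domination makes formal sense); the inequality itself will not use it.

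The argument rests on two ingredients that combine very cleanly. First, since $\abs{x_k}\ge 0$, the triangle inequality in the Archimedean vector lattice $X$ yields
\begin{displaymath}
  \Bigabs{\sum_{k=1}^m\alpha_k\abs{x_k}}
  \le\sum_{k=1}^m\abs{\alpha_k}\abs{x_k}
  =\sum_{k=1}^m\abs{\alpha_k x_k},
\end{displaymath}
and taking norms (using that $X$ is a Banach lattice, so $\abs{y}\le z$ implies $\norm{y}\le\norm{z}$) gives $\bignorm{\sum_{k=1}^m\alpha_k\abs{x_k}}\le\bignorm{\sum_{k=1}^m\abs{\alpha_k x_k}}$. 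Second, the absolute inequality for $(x_k)$, with absolute constant $A$, bounds the right-hand side: $\bignorm{\sum_{k=1}^m\abs{\alpha_k x_k}}\le A\bignorm{\sum_{k=1}^m\alpha_k x_k}$. Chaining these two estimates gives the required domination with constant $C=A$.

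There is no real obstacle here; the main point is merely to observe that the absolute inequality controls the quantity $\bignorm{\sum\abs{\alpha_k}\abs{x_k}}$, which in a Banach lattice automatically dominates $\bignorm{\sum\alpha_k\abs{x_k}}$ by a single application of the lattice triangle inequality.
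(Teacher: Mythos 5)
Your proof is correct, and it takes a genuinely different --- and in fact sharper --- route than the paper's. The paper splits the index set by the sign of $\alpha_k$, writes $\sum_{k=1}^m\alpha_k\abs{x_k}=\sum_{k\in I_+}\abs{\alpha_kx_k}-\sum_{k\in I_-}\abs{\alpha_kx_k}$, applies the \emph{norm} triangle inequality, then the absolute inequality to each of the two pieces separately, and finally uses the unconditionality of $(x_k)$ (automatic for absolute sequences) to pass from $\bignorm{\sum_{k\in I_\pm}\alpha_kx_k}$ back to $\bignorm{\sum_{k=1}^m\alpha_kx_k}$; this yields the constant $2AK_u$. You instead apply the triangle inequality at the level of the lattice modulus, $\bigabs{\sum_{k=1}^m\alpha_k\abs{x_k}}\le\sum_{k=1}^m\abs{\alpha_kx_k}$, take norms using that the norm is a lattice norm, and then invoke the absolute inequality exactly once. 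This gives the domination constant $A$ rather than $2AK_u$ and needs no reference to the unconditional constant at all. Both arguments use the hypothesis that $\bigl(\abs{x_k}\bigr)$ is basic only so that ``dominated'' makes sense, exactly as you note, so your version is a clean improvement in both length and constant.
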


\begin{proof}
  Fix $\alpha_1,\dots,\alpha_m$. Let $I_+=\{k\mid \alpha_k\ge 0\}$ and
  $I_-=\{k\mid \alpha_k< 0\}$. Then
  \begin{multline*}
    \Bignorm{\sum_{k=1}^m\alpha_k\abs{x_k}}
    =\Bignorm{\sum_{k\in I_+}\abs{\alpha_kx_k}-
      \sum_{k\in I_-}\abs{\alpha_kx_k}}
    \le\Bignorm{\sum_{k\in I_+}\abs{\alpha_kx_k}}
    +\Bignorm{\sum_{k\in I_-}\abs{\alpha_kx_k}}\\
    \le A\Bignorm{\sum_{k\in I_+}\alpha_kx_k}
    +A\Bignorm{\sum_{k\in I_-}\alpha_kx_k}
    \le 2AK_u\Bignorm{\sum_{k=1}^m\alpha_kx_k},
  \end{multline*}
  where $A$ is the absolute constant, and $K_u$ is the
  unconditional constant of~$(x_k)$. 
\end{proof}

\begin{example}
  In general, even if $(x_k)$ is an absolute basis, the sequence
  $\bigl(\abs{x_k}\bigr)$ need not be basic. For example, take
  $X=\ell_p$ ($1\le p<\infty$), and put $x_1=e_1+e_2$, $x_2=e_1-e_2$,
  and $x_k=e_k$ whenever $k>2$.
\end{example}

\begin{example}
  \emph{An absolute sequence $(x_k)$ such that the sequence
    $(\abs{x_k})$ is conditional basic, hence not equivalent
    to~$(x_k)$.} Let $X=\ell_\infty$ and let $x_k$ be the $k$-th row
  of the following infinite matrix: {\small
  \begin{displaymath}
  \renewcommand{\tabcolsep}{1pt}
    \begin{tabular}{|cc|cccc|cccccccc|cccccccccccccccc|cccccccccc}
    1&-1&1&1&-1&-1&1&1&1&1&-1&-1&-1&-1&
     1&1&1&1&1&1&1&1&-1&-1&-1&-1&-1&-1&-1&-1&1&1&1&1&1&1&1&1&1&\dots\\
    0&0&1&-1&1&-1&1&1&-1&-1&1&1&-1&-1&
      1&1&1&1&-1&-1&-1&-1&1&1&1&1&-1&-1&-1&-1&1&1&1&1&1&1&1&1&-1&\dots\\
    0&0&0&0&0&0&1&-1&1&-1&1&-1&1&-1&
      1&1&-1&-1&1&1&-1&-1&1&1&-1&-1&1&1&-1&-1&1&1&1&1&-1&-1&-1&-1&1&\dots\\
    0&0&0&0&0&0&0&0&0&0&0&0&0&0&
      1&-1&1&-1&1&-1&1&-1&1&-1&1&-1&1&-1&1&-1&1&1&-1&-1&1&1&-1&-1&1&\dots\\
    0&0&0&0&0&0&0&0&0&0&0&0&0&0&
      0&0&0&0&0&0&0&0&0&0&0&0&0&0&0&0&1&-1&1&-1&1&-1&1&-1&1&\dots\\
      \vdots&&\vdots&&&&\vdots&&&&&&&&\vdots&&&&&&&&&&&&&&&&\vdots&&&&&&
    \end{tabular}
  \end{displaymath}
  }%
  Each $x_k$ is made up of ``blocks'', where initial blocks are zeros
  and further blocks are discrete finite Rademacher vectors. For any
  $m\in\mathbb N$ and any $\alpha_1,\dots,\alpha_m$, there is a column
  in the matrix whose first $m$ entries match the signs of
  $\alpha_1,\dots,\alpha_m$. This yields
  \begin{math}
    \bignorm{\sum_{k=1}^m\alpha_kx_k}=\sum_{k=1}^m\abs{\alpha_k}
    =\bignorm{\sum_{k=1}^m\abs{\alpha_kx_k}}.
  \end{math}
  It follows that $(x_k)$ is a 1-unconditional 1-absolute basic
  sequence equivalent to the unit vector basis of~$\ell_1$.  On the
  other hand, it can be easily verified that $\bigl(\abs{x_k}\bigr)$
  is a conditional basic sequence.
\end{example}

\begin{remark}
  In~\cite{Bu:90}, the authors define a \term{lattice decomposition}
  of a Banach lattice $X$ as a Schauder decomposition $(X_k)$ such
  that $X=[X_k]$ and for every~$k$, the operator $Q_k=P_k-P_{k-1}$,
  which is a projection onto~$X_k$, is a lattice homomorphism (we take
  $P_0=0$). More generally, let $(X_k)$ be a Schauder decomposition of
  $X$ with $X=[X_k]$ such that $Q_k\ge 0$ for every~$k$. Such a
  decomposition is absolute with absolute constant $A=1$. Indeed, let
  $x=\sum_{k=1}^mx_k$, where $x_k\in X_k$. Then
  \begin{displaymath}
    \sum_{k=1}^m\abs{x_k}=\sum_{k=1}^m\abs{Q_kx}
    \le\sum_{k=1}^mQ_k\abs{x}\le\sum_{k=1}^\infty Q_k\abs{x}
    =\abs{x},
  \end{displaymath}
  so that
  \begin{math}
    \bignorm{\sum_{k=1}^m\abs{x_k}}\le
    \bignorm{\sum_{k=1}^mx_k}.
  \end{math}

  It can be easily verified that if $(x_k)$ is a basis such that
  $Q_k\ge 0$ for every $k$ then $X$ is atomic and $(x_k)$ is a
  disjoint sequence of atoms.
  %
  %
  %
  We don't have a good understanding of the structure of those Banach
  lattices $X$ which admit FDDs $(X_k)$ with $Q_k$ positive for each~$k$. In
  particular, does $X$ have atoms? Is there a disjoint sequence such
  that each $X_k$ is the span of a block of this sequence? Notice that
  if the $Q_k$'s are lattice homomorphisms then both these questions have
  positive answers.
\end{remark}


\section{Permutable and absolute sequences in $L_p$ spaces}

Suppose $1\le p<\infty$. We mentioned in Example~\ref{Haar} that the
Haar basis $(h_k)$ of $L_p[0,1]$ is a bibasis iff $p>1$. In this case,
it follows that every block sequence of it is bibasic. In particular,
if $p>1$ then the Rademacher sequence~$(r_k)$, being a block sequence
of~$(h_k)$, is a bibasic sequence. The latter statement remains valid
for $p=1$:

\begin{proposition}\label{Rad-p-bib}
  Let $1\le p<\infty$. The Rademacher sequence $(r_k)$ is a
  bibasic sequence in $L_p[0,1]$. Furthermore, it is permutable but
  not absolute.
\end{proposition}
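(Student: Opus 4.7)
The plan is to verify the bibasis inequality of \Cref{bib}\eqref{bib-ineq} using martingale machinery, then deduce permutability from the distributional invariance of $(r_k)$ via \Cref{perm-eq}, and finally rule out absoluteness by comparison with Khintchine's inequality.

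For the bibasis inequality, I fix $\alpha_1,\dots,\alpha_m$ and set $S_n=\sum_{k=1}^n\alpha_kr_k$. Then $(S_n)$ is a martingale with differences $d_k=\alpha_kr_k$; since $r_k^2=1$ a.e., the square function $\bigl(\sum_{k=1}^md_k^2\bigr)^{1/2}$ equals the \emph{constant} $\sigma:=\bigl(\sum_{k=1}^m\alpha_k^2\bigr)^{1/2}$. The Burkholder-Gundy-Davis inequality~\eqref{BGD} followed by Khintchine's inequality then yields
\begin{displaymath}
  \biggnorm{\bigvee_{n=1}^m\abs{S_n}}_{L_p}
  \le C_2\sigma
  \le C_2K_p\norm{S_m}_{L_p},
\end{displaymath}
which is the bibasis inequality uniformly for all $1\le p<\infty$. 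For $1<p<\infty$ this also follows from the last sentence of \Cref{Haar}; the point of the BGD argument is that it also covers $p=1$, where Doob's inequality is unavailable.

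For permutability, I would invoke \Cref{perm-eq}\eqref{perm-eq-ineq}. For any distinct indices $k_1,\dots,k_m$, the random variables $r_{k_1},\dots,r_{k_m}$ are independent and uniformly distributed on $\{-1,1\}$, hence equidistributed with $r_1,\dots,r_m$. Therefore $\bigvee_{n=1}^m\bigabs{\sum_{i=1}^n\alpha_ir_{k_i}}$ and $\bigvee_{n=1}^m\bigabs{\sum_{i=1}^n\alpha_ir_i}$ have identical distributions for any scalars $\alpha_i$, and similarly for the total sums. The bibasis inequality established above thus transfers to every permutation with the same constant, so \Cref{perm-eq}\eqref{perm-eq-ineq} gives permutability.

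To see that $(r_k)$ is not absolute, observe that $\abs{r_k}=\one$ a.e., so $\bignorm{\sum_{k=1}^m\abs{\alpha_kr_k}}_{L_p}=\sum_{k=1}^m\abs{\alpha_k}$, while Khintchine bounds $\bignorm{\sum_{k=1}^m\alpha_kr_k}_{L_p}\le K'_p\bigl(\sum_{k=1}^m\alpha_k^2\bigr)^{1/2}$. Setting $\alpha_1=\dots=\alpha_m=1$, any absolute constant $A$ would force $m\le AK'_p\sqrt{m}$, which fails for large $m$. The main technical point in all of the above is the $p=1$ bibasic estimate: Doob's inequality is unavailable, so one must exploit the distinctive feature that the Rademacher square function is deterministic in order to combine BGD with Khintchine.
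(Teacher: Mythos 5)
Your proof is correct and takes essentially the same approach as the paper: Burkholder--Gundy--Davis plus Khintchine for the bibasis inequality (valid down to $p=1$ where Doob fails), and $\abs{r_k}=\one$ versus Khintchine for non-absoluteness. The only cosmetic difference is in permutability: the paper applies the BGD argument directly to an arbitrary permutation $(x_k)$ of $(r_k)$, which is still a martingale difference sequence, whereas you transfer the inequality for $(r_k)$ to arbitrary index sets via equidistribution of $(r_{k_1},\dots,r_{k_m})$ with $(r_1,\dots,r_m)$ --- two phrasings of the same exchangeability fact.
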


\begin{proof}
  Let $(x_k)$ be a permutation of the Rademacher sequence. Fix scalars
  $\alpha_1,\dots,\alpha_m$ and let $f_n=\sum_{k=1}^n\alpha_kx_k$ as
  $n=1,\dots,m$. It is easy to see that $(f_n)_{n=1}^m$ is a
  martingale with difference sequence $d_k=\alpha_kx_k$. The associated square function is
  $S(f)=\bigl(\sum_{k=1}^m\alpha_k^2\bigr)^{\frac12}\one$. Applying
  Burkholder-Gundy-Davis inequality~\eqref{BGD} followed by
  Khintchine's inequality, we get
  \begin{displaymath}
    \biggnorm{\bigvee_{n=1}^m\Bigabs{\sum_{k=1}^n\alpha_kx_k}}_{L_p}
    \le C\Bigl(\sum_{k=1}^m\alpha_k^2\Bigr)^{\frac12}
    \le C'\Bignorm{\sum_{k=1}^m\alpha_kx_k}_{L_p}.
  \end{displaymath}
  Hence $(x_k)$ is a bibasic sequence, which yields that $(r_k)$ is a
  permutable bibasic sequence.

  Furthermore, it follows from $\abs{r_k}=\one$ that
  \begin{math}
    \bignorm{\sum_{k=1}^m\abs{\alpha_kr_k}}_{L_p}=\sum_{k=1}^m\abs{\alpha_k},
  \end{math}
  while Khintchine's inequality yields that
   \begin{math}
     \bignorm{\sum_{k=1}^m\alpha_kr_k}_{L_p}
     \sim\Bigl(\sum_{k=1}^m\alpha_k^2\Bigr)^\frac12.
  \end{math} 
  As these two quantities are not equivalent, we conclude that $(r_k)$
  is not absolute.
\end{proof}

The fact that the Rademacher sequence is bibasic in $L_1[0,1]$ may be
generalized as follows.

\begin{proposition}\label{unc-bib-L1}
  Let $(x_k)$ be a block sequence of the Haar basis~$(h_k)$. If $(x_k)$
  is unconditional then it is bibasic in $L_1[0,1]$.
\end{proposition}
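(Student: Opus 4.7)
The plan is to exploit the martingale structure of Haar block sequences together with the Burkholder--Gundy--Davis inequality, and to use unconditionality to replace the square function bound (which on its own is not comparable to the $L_1$ norm of the sum) by the $L_1$ norm of the sum itself via a Khintchine averaging argument.

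First I would fix scalars $\alpha_1,\dots,\alpha_m$ and set $d_k=\alpha_k x_k$, $f_n=\sum_{k=1}^n d_k$. Because $(x_k)$ is a block sequence of the Haar basis, each $f_n$ is measurable with respect to the $\sigma$-algebra $\mathcal F_n$ generated by the Haar functions appearing in $x_1,\dots,x_n$, and $E[d_{n+1}\mid\mathcal F_n]=0$ since the Haar functions in $x_{n+1}$ lie strictly past $\mathcal F_n$. Thus $(f_n)_{n=1}^m$ is a martingale, and the associated maximal and square functions are $f^*=\bigvee_{n=1}^m|f_n|$ and $S(f)=\bigl(\sum_{k=1}^m d_k^2\bigr)^{1/2}$. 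By the Burkholder--Gundy--Davis inequality~\eqref{BGD} applied in $L_1$,
\begin{displaymath}
  \biggnorm{\bigvee_{n=1}^m\Bigabs{\sum_{k=1}^n\alpha_kx_k}}_{L_1}
  =\norm{f^*}_{L_1}\le C_2\norm{S(f)}_{L_1}.
\end{displaymath}

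Next I would bound $\norm{S(f)}_{L_1}$ by $\norm{f_m}_{L_1}$ using unconditionality. Pointwise, Khintchine's inequality in $L_1$ of the Rademacher variables yields a constant $A_1>0$ such that
\begin{displaymath}
  A_1\,S(f)(\omega)=A_1\Bigl(\sum_{k=1}^m d_k(\omega)^2\Bigr)^{1/2}
  \le\mathrm{Ave}_{\varepsilon_k=\pm1}\Bigabs{\sum_{k=1}^m\varepsilon_kd_k(\omega)}.
\end{displaymath}
Integrating over $[0,1]$ and using the fact that for every fixed choice of signs $(\varepsilon_k)$, unconditionality of $(x_k)$ gives $\bignorm{\sum_{k=1}^m\varepsilon_kd_k}_{L_1}\le K_u\bignorm{\sum_{k=1}^m d_k}_{L_1}=K_u\norm{f_m}_{L_1}$, I obtain
\begin{displaymath}
  A_1\norm{S(f)}_{L_1}\le K_u\norm{f_m}_{L_1}.
\end{displaymath}
Chaining the two estimates gives $\norm{f^*}_{L_1}\le\frac{C_2K_u}{A_1}\norm{f_m}_{L_1}$, which is exactly the bibasis inequality~\eqref{bib-ineq} with $M=\frac{C_2K_u}{A_1}$. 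Since $(x_k)$ consists of non-zero vectors, Remark~\ref{bib-bib-ineq} then yields that $(x_k)$ is bibasic.

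The only real subtlety is establishing the martingale property, which is essentially automatic for Haar block sequences, and the standard trick of swapping pointwise Khintchine with integration and unconditionality; everything else is an assembly of classical inequalities already recalled in the preliminaries. No part of the argument looks genuinely obstructive.
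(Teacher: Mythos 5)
Your proof is correct and follows essentially the same route as the paper's: view the partial sums as a martingale, apply the Burkholder--Gundy--Davis inequality to bound the maximal function by the square function in $L_1$, and then control the square function via the pointwise Khintchine inequality averaged over signs together with the unconditionality of $(x_k)$. The only cosmetic differences are that you spell out the filtration argument for the martingale property (which the paper compresses into ``a block sequence of a martingale difference sequence is a martingale difference sequence'') and that you write the sign average as $\mathrm{Ave}$ rather than $\frac{1}{2^m}\sum_{\varepsilon_k=\pm 1}$.
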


\begin{proof}
  Fix scalars $\alpha_1,\dots,\alpha_m$ and let
  $f_n=\sum_{k=1}^n\alpha_kx_k$ as $n=1,\dots,m$.  Since $(h_k)$ is a
  martingale difference sequence, so is~$(x_k)$, hence $(f_n)$ is a
  martingale. Applying Burkholder-Gundy-Davis inequality~\eqref{BGD},
  we get
  \begin{displaymath}
    \biggnorm{\bigvee_{n=1}^m\Bigabs{\sum_{k=1}^n\alpha_kx_k}}
    \sim\biggnorm{\Bigl(\sum_{k=1}^m\abs{\alpha_kx_k}^2\Bigr)^{\frac12}}.
  \end{displaymath}
  By Khintchine's inequality, there is a constant $C$ such that
  \begin{displaymath}
    \Bigl(\sum_{k=1}^m\abs{\alpha_kx_k}^2\Bigr)^{\frac12}
    \le C\frac{1}{2^m}\sum_{\varepsilon_k=\pm 1}
    \Bigabs{\sum_{k=1}^m\varepsilon_k\alpha_kx_k}.
  \end{displaymath}
  Indeed, this inequality is true for real numbers, hence it remains
  valid for vectors in~$X$. It follows that
  \begin{displaymath}
    \biggnorm{\Bigl(\sum_{k=1}^m\abs{\alpha_kx_k}^2\Bigr)^{\frac12}}
    \le C\frac{1}{2^m}\sum_{\varepsilon_k=\pm 1}
    \Bignorm{\sum_{k=1}^m\varepsilon_k\alpha_kx_k}
    \le CK_u\Bignorm{\sum_{k=1}^m\alpha_kx_k},
  \end{displaymath}
  where $K_u$ is the unconditional constant of~$(x_k)$.
  %
  Therefore, $(x_k)$ is bibasic.
\end{proof}

It is known that every closed infinite-dimensional subspace of
$L_1[0,1]$ contains an unconditional basic sequence; see,
\cite[Corollary~12]{Rosenthal:73} or \cite[p.~38]{Lindenstrauss:79}. 

\begin{corollary}\label{sub-L1-unc-bib}
  Every closed infinite-dimensional subspace of an AL-space contains
  an unconditional bibasic sequence.
\end{corollary}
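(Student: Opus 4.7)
The plan is to reduce to $X = L_1[0,1]$ and then use a dichotomy: either find a subsequence equivalent to the $\ell_1$ basis (and exploit that absolute sequences are bibasic), or perturb a weakly null block basic sequence to a block of the Haar basis and invoke \Cref{unc-bib-L1} together with \Cref{biPSP}.

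\emph{Reduction to $L_1[0,1]$.} Replace $Y$ by a closed infinite-dimensional separable subspace. The closed sublattice $Z$ of $X$ generated by $Y$ is then a separable AL-space, so by Kakutani's representation theorem it is lattice-isometric to some $L_1(\mu)$ with $\mu$ separable $\sigma$-finite. Splitting $\mu$ into its atomic and atomless parts, $L_1(\mu)$ lattice-embeds isometrically into $L_1[0,1]$; since bibasicity is ambient-space independent by \Cref{ambient}, we may work inside $L_1[0,1]$. By the Rosenthal--Lindenstrauss theorem quoted immediately before the corollary, $Y$ contains a normalized unconditional basic sequence~$(z_k)$.

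\emph{Dichotomy.} By Rosenthal's $\ell_1$ theorem, either (a)~some subsequence of $(z_k)$ is equivalent to the unit vector basis of $\ell_1$, or (b)~$(z_k)$ has a weakly Cauchy subsequence. In case~(a), \Cref{l1-abs} says the subsequence is absolute; \Cref{abs} then ensures that, for any convergent expansion in the subsequence, the partial sums are order bounded, which is condition \Cref{bib}\eqref{bib-obdd} for bibasicity, and unconditionality is inherited. In case~(b), letting $(z_{k_j})$ denote the weakly Cauchy subsequence, set $u_j = (z_{k_{2j}}-z_{k_{2j+1}})/\norm{z_{k_{2j}}-z_{k_{2j+1}}}$; then $(u_j)$ is a normalized weakly null block basic sequence of $(z_k)$, in particular unconditional basic and contained in~$Y$. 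Apply Bessaga--Pe{\l}czy{\'n}ski's selection principle to $(u_j)$ relative to the Haar basis of $L_1[0,1]$: pass to a subsequence $(u_{j_i})$ and find a block sequence $(w_i)$ of the Haar basis with $\sum_i\norm{u_{j_i}-w_i}$ arbitrarily small. For small enough perturbation, $(w_i)$ remains unconditional, hence bibasic by \Cref{unc-bib-L1}, and \Cref{biPSP} then transfers bibasicity to $(u_{j_i})\subseteq Y$.

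The only delicate point is case~(b): the Bessaga--Pe{\l}czy{\'n}ski approximation must be fine enough to simultaneously preserve unconditionality of $(w_i)$ and to satisfy the hypothesis of \Cref{biPSP}, which depends on the basis constant of~$(w_i)$ in $L_1[0,1]$ (finite but a priori uncontrolled). Both constraints are of the form $\sum_i\norm{u_{j_i}-w_i}<\delta$ for an explicit $\delta>0$, and both are handled by the usual gliding-hump control inside the selection principle.
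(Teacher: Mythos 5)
Your proof is correct and follows essentially the same route as the paper's: reduce to $L_1[0,1]$, handle the $\ell_1$-case via \Cref{l1-abs}, and in the remaining case perturb a weakly null unconditional sequence onto a block of the Haar basis using Bessaga--Pe{\l}czy{\'n}ski, \Cref{unc-bib-L1}, and \Cref{biPSP}. The only difference is organizational -- you obtain the dichotomy from Rosenthal's $\ell_1$ theorem and take normalized differences of a weakly Cauchy subsequence to get weak nullity, whereas the paper splits on reflexivity of the subspace (using that $L_1$ is a KB-space); and your worry about the basis constant of the Haar blocks is unnecessary since the Haar system is monotone, so that constant is at most $1$.
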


\begin{proof}
  Let $X$ be a closed infinite-dimensional subspace of an
  AL-space~$L$. WLOG we may take $L=L_1[0,1]$. Indeed, it is easy to
  see that we may assume WLOG that $X$ is separable. Replacing $L$
  with the closed sublattice generated by~$X$, we may assume WLOG that
  $L$ is separable. It is well-known that, up to a lattice isometry,
  $L$ is one of the following: $\ell_1$, $L_1[0,1]$,
  $\ell_1\oplus_1L_1[0,1]$, or $\ell_1^m\oplus_1L_1[0,1]$; see, e.g.,
  \cite{Lacey:76} or Section~2.7 in~\cite{Meyer-Nieberg:91}.  All
  these spaces can be lattice isometrically embedded into $L_1[0,1]$,
  so we may assume that $L=L_1[0,1]$.

  \emph{Case 1: $X$ is non-reflexive.} Since $L_1[0,1]$ is a KB-space,
  $X$ contains no isomorphic copy of~$c_0$. By Theorem~1.c.5
  in~\cite{Lindenstrauss:79}, $X$ contains an isomorphic copy
  of~$\ell_1$, and, therefore, $X$ contains a basic sequence which is
  equivalent to the unit vector basis of~$\ell_1$. By
  Proposition~\ref{l1-abs}, it is absolute; in particular, it is
  unconditional and bibasic.
   
  \emph{Case 2: $X$ is reflexive.}  Fix a normalized unconditional basic
  sequence $(x_k)$ in~$X$. Since $X$ is reflexive, $(x_k)$ is weakly
  null. Passing to a subsequence and using
  Bessaga-Pe{\l}\-czy{\'n}\-ski's selection principle, we find a block
  sequence $(u_k)$ of the Haar basis $(h_k)$ such that
  $\norm{x_k-u_k}\to 0$ sufficiently fast so that $(u_k)$ is
  equivalent to~$(x_k)$. It follows that $(u_k)$ is unconditional and,
  therefore, bibasic by
  Proposition~\ref{unc-bib-L1}. Theorem~\ref{biPSP} now yields that,
  after passing to further subsequences if necessary, $(x_k)$ is bibasic.
\end{proof}

\begin{proposition}
  A normalized basic sequence in an AL-space is absolute
  iff it is equivalent to the unit vector basis of~$\ell_1$.
\end{proposition}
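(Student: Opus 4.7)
The plan is to prove both implications separately, with the ``if'' direction being immediate from \Cref{l1-abs} and the ``only if'' direction exploiting the defining feature of an AL-space: the norm is additive on the positive cone, i.e.\ $\norm{u+v}=\norm{u}+\norm{v}$ for all $u,v\ge 0$.

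For the forward direction, suppose $(x_k)$ is a normalized absolute basic sequence in an AL-space $L$ with absolute constant $A$. Given any scalars $\alpha_1,\dots,\alpha_m$, the vectors $\abs{\alpha_1 x_1},\dots,\abs{\alpha_m x_m}$ are positive, so the AL-norm additivity yields
\begin{displaymath}
  \Bignorm{\sum_{k=1}^m\abs{\alpha_k x_k}}
  =\sum_{k=1}^m\norm{\alpha_k x_k}
  =\sum_{k=1}^m\abs{\alpha_k},
\end{displaymath}
using that $\norm{x_k}=1$. The absolute inequality then gives $\sum_{k=1}^m\abs{\alpha_k}\le A\bignorm{\sum_{k=1}^m\alpha_k x_k}$, while the triangle inequality yields $\bignorm{\sum_{k=1}^m\alpha_k x_k}\le\sum_{k=1}^m\abs{\alpha_k}$. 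Together, these show that $(x_k)$ is equivalent to the unit vector basis of $\ell_1$.

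For the converse, simply invoke \Cref{l1-abs}, which states that any basic sequence equivalent to the unit vector basis of $\ell_1$ in a Banach lattice is automatically absolute.

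There is no real obstacle here; the whole argument hinges on a one-line computation using AL-additivity. The only thing to be careful about is invoking normalization properly so that $\sum\norm{\alpha_kx_k}$ reduces cleanly to $\sum\abs{\alpha_k}$, and noting that no extra hypothesis (such as unconditionality) has to be assumed since the absolute inequality supplies the lower $\ell_1$-estimate on its own.
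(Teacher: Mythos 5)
Your proof is correct and follows essentially the same route as the paper: the key step in both is the AL-additivity computation $\bignorm{\sum_{k=1}^m\abs{\alpha_kx_k}}=\sum_{k=1}^m\abs{\alpha_k}$, from which both implications follow (the paper reads off the converse from the same identity, while you cite Proposition~\ref{l1-abs}, but that is an inessential difference).
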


\begin{proof}
  Let $(x_k)$ be a normalized basic sequence in an AL-space. For
  every $\alpha_1,\dots,\alpha_m$, we have
  \begin{displaymath}
  \Bignorm{\sum_{k=1}^m\abs{\alpha_kx_k}}
  =\sum_{k=1}^m\abs{\alpha_k}
  =\Bignorm{\sum_{k=1}^m\alpha_ke_k},
  \end{displaymath}
  where $(e_k)$ is the standard unit vector basis of~$\ell_1$. It
  follows  that $(x_k)$ is absolute iff it is
  equivalent to~$(e_k)$.
\end{proof}

It is also true that every normalized
absolute sequence in $L_p(\mu)$ is equivalent to the unit vector basis
of~$\ell_p$. Our proof is based on the proof of Theorem~2
in~\cite{Johnson:15}. 

\begin{theorem}\label{abs-Lp}
  Every normalized absolute sequence in $L_p(\mu)$ ($1\le p<\infty$)
  is equivalent to the unit vector basis $(e_k)$ of~$\ell_p$.
\end{theorem}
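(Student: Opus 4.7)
The goal is to establish the two-sided estimate $\Bignorm{\sum_{k=1}^m\alpha_kx_k}_{L_p}\sim_p\bigl(\sum_{k=1}^m\abs{\alpha_k}^p\bigr)^{1/p}$ with constants depending only on $p$ and the absolute constant $A$ of $(x_k)$; this is exactly equivalence to the unit vector basis of $\ell_p$.

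For the lower bound I would use the elementary pointwise inequality $\sum a_k^p\le(\sum a_k)^p$ (valid for $a_k\ge 0$, $p\ge 1$). Applied to $a_k=\abs{\alpha_k x_k(t)}$ and integrated, this gives $\sum\abs{\alpha_k}^p=\sum\abs{\alpha_k}^p\norm{x_k}_{L_p}^p\le\Bignorm{\sum\abs{\alpha_kx_k}}_{L_p}^p$. The absolute inequality then yields $\Bignorm{\sum\alpha_kx_k}_{L_p}\ge\tfrac{1}{A}\bigl(\sum\abs{\alpha_k}^p\bigr)^{1/p}$.

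For the upper bound I would randomize. The absolute inequality applied to $(\varepsilon_k\alpha_k)$ shows $\Bignorm{\sum\varepsilon_k\alpha_kx_k}_{L_p}\ge\tfrac{1}{A}\Bignorm{\sum\abs{\alpha_kx_k}}_{L_p}\ge\tfrac{1}{A}\Bignorm{\sum\alpha_kx_k}_{L_p}$ for every choice of signs $\varepsilon_k=\pm 1$. Raising to the $p$-th power, averaging in $\varepsilon$, and applying Khintchine's inequality pointwise together with Fubini then yields
\begin{math}
  \Bignorm{\sum\alpha_kx_k}_{L_p}\lesssim_{A,p}\Bignorm{\bigl(\sum\abs{\alpha_kx_k}^2\bigr)^{1/2}}_{L_p}.
\end{math}
For $1\le p\le 2$, subadditivity of $t\mapsto t^{p/2}$ on $\mathbb R_+$ gives the pointwise bound $\bigl(\sum(\alpha_kx_k)^2\bigr)^{p/2}\le\sum\abs{\alpha_kx_k}^p$; integrating and using $\norm{x_k}_{L_p}=1$ bounds the square function by $\bigl(\sum\abs{\alpha_k}^p\bigr)^{1/p}$, closing the upper bound.

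The main obstacle is the case $p>2$. There the subadditivity reverses, and Minkowski's inequality in $L_{p/2}$ only yields $\Bignorm{\bigl(\sum\abs{\alpha_kx_k}^2\bigr)^{1/2}}_{L_p}\le\bigl(\sum\abs{\alpha_k}^2\bigr)^{1/2}$, which for $p>2$ is strictly weaker than the target $\ell_p$-bound. Following the strategy of~\cite{Johnson:15}, I would exploit the fact that absoluteness forces the sequence $(\abs{x_k})$ to be essentially disjointly supported in $L_p$: the square-function equivalence established above cannot hold unless at each point of $\mu$ few terms overlap on most of the mass. From this I would extract an $\ell_p$-equivalent subsequence via a Kadec--Pelczynski-type dichotomy (the $\ell_2$-alternative is incompatible with the absolute inequality, exactly as in the Rademacher example of \Cref{Rad-p-bib}), and then upgrade the subsequential equivalence to the full sequence via the perturbation result \Cref{biPSP}.
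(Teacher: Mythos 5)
Your lower bound and your treatment of $1\le p\le 2$ are correct and essentially coincide with the paper's argument: the randomization--Khintchine--Fubini step giving
\begin{math}
  \Bignorm{\sum_{k=1}^m\alpha_kx_k}\lesssim
  \Bignorm{\bigl(\sum_{k=1}^m\abs{\alpha_kx_k}^2\bigr)^{1/2}}
\end{math}
is exactly what the paper does, and your pointwise inequality $\sum a_k^p\le\bigl(\sum a_k\bigr)^p$ is a clean, self-contained way to get the $\ell_p$ lower estimate for all $p$.

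The case $2<p<\infty$, which you correctly flag as the main obstacle, is where your plan has a genuine gap. First, the claim that absoluteness forces $(\abs{x_k})$ to be ``essentially disjointly supported'' is asserted, not proved, and the Kadec--Pe{\l}czy\'nski dichotomy you invoke only produces a \emph{subsequence} equivalent to the $\ell_p$ basis. The theorem is about the whole sequence, and \Cref{biPSP} cannot bridge this: it compares a sequence to a \emph{nearby} sequence (small coordinatewise perturbation), not a sequence to one of its subsequences, so there is no mechanism in your outline for upgrading the subsequential conclusion to the uniform two-sided estimate over all finite subsets of the original sequence. Ruling out the $\ell_2$-alternative also requires an actual argument (comparing $L_1$- and $L_p$-norms of the $x_k$), not just the analogy with the Rademacher example. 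The paper avoids all of this with a short deterministic interpolation: having established
\begin{math}
  \Bignorm{\sum f_k}\sim\Bignorm{\bigl(\sum\abs{f_k}^2\bigr)^{1/2}}
\end{math}
and
\begin{math}
  \Bignorm{\sum\abs{f_k}}\lesssim\Bignorm{\sum f_k}
\end{math}
(both of which you also have), it writes $\tfrac12=\theta+\tfrac{1-\theta}{p}$ and uses H\"older's inequality pointwise to get
\begin{math}
  \Bignorm{\bigl(\sum\abs{f_k}^2\bigr)^{1/2}}
  \le\Bignorm{\sum\abs{f_k}}^{\theta}
  \Bignorm{\bigl(\sum\abs{f_k}^p\bigr)^{1/p}}^{1-\theta},
\end{math}
which yields
\begin{math}
  \Bignorm{\sum f_k}\lesssim\Bignorm{\sum f_k}^{\theta}
  \bigl(\sum\abs{\alpha_k}^p\bigr)^{(1-\theta)/p}
\end{math}
and hence the desired upper bound after dividing. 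You should replace your $p>2$ outline with an argument of this kind (or fully justify the disjointification and, separately, the passage from a subsequence to the full sequence).
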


\begin{proof}
  Let $(x_k)$ be a normalized absolute sequence in $L_p(\mu)$.  Fix
  $\alpha_1,\dots,\alpha_m$. Put $f_k=\alpha_kx_k$. Being absolute,
  $(x_k)$ is unconditional, so that
  \begin{math}
    \Bignorm{\sum_{k=1}^mf_k}\sim
        \Bignorm{\sum_{k=1}^m\varepsilon_kf_k}
  \end{math}
  for every choice of signs $\varepsilon_k=\pm 1$. Using Fubini's
  Theorem and Khintchine's inequality, we get
  \begin{multline*}
    \Bignorm{\sum_{k=1}^mf_k}^p\sim
    \Ave\limits_{\varepsilon_k=\pm 1}
    \Bignorm{\sum_{k=1}^m\varepsilon_kf_k}^p
    =\int_{t=0}^1\Bignorm{\sum_{k=1}^mr_k(t)f_k}^pdt\\
    =\int_{t=0}^1\int_\Omega\Bigabs{\sum_{k=1}^mr_k(t)
    f_k(\omega)}^pd\omega\,dt
    \lesssim\int_\Omega\Bigl(\sum_{k=1}^m\abs{f_k(\omega)}^2\Bigr)^{\frac{p}{2}}
    d\omega
    =\Bignorm{\Bigl(\sum_{k=1}^m\abs{f_k}^2\Bigr)^\frac12}^p.
  \end{multline*}
  
  Thus,
  \begin{math}
    \Bignorm{\sum_{k=1}^mf_k}\lesssim
    \Bignorm{\Bigl(\sum_{k=1}^m\abs{f_k}^2\Bigr)^\frac12}.
  \end{math}
  On the other hand, since $(x_k)$ is absolute, we have
  \begin{displaymath}
    \Bignorm{\Bigl(\sum_{k=1}^m\abs{f_k}^2\Bigr)^\frac12}
    \le\Bignorm{\sum_{k=1}^m\abs{f_k}}
    \lesssim\Bignorm{\sum_{k=1}^mf_k},
  \end{displaymath}
  so that
  \begin{math}
    \Bignorm{\sum_{k=1}^mf_k}\sim
    \Bignorm{\Bigl(\sum_{k=1}^m\abs{f_k}^2\Bigr)^\frac12}.
  \end{math}
  
  If $1\le p\le 2$, we have
  \begin{displaymath}
    \Bignorm{\sum_{k=1}^mf_k}\lesssim
    \Bignorm{\Bigl(\sum_{k=1}^m\abs{f_k}^2\Bigr)^\frac12}
    \le\Bignorm{\Bigl(\sum_{k=1}^m\abs{f_k}^p\Bigr)^\frac1p}
    \le\Bignorm{\sum_{k=1}^m\abs{f_k}}
    \lesssim\Bignorm{\sum_{k=1}^mf_k},    
  \end{displaymath}
  so that
  \begin{displaymath}
    \Bignorm{\sum_{k=1}^m\alpha_kx_k}
    =\Bignorm{\sum_{k=1}^mf_k}
    \sim\Bignorm{\Bigl(\sum_{k=1}^m\abs{f_k}^p\Bigr)^\frac1p}
    =\Bigl(\sum_{k=1}^m\abs{\alpha_k}^p\Bigr)^\frac1p.
  \end{displaymath}
  
  Now suppose that $2<p<\infty$. Then
  \begin{displaymath}
    \Bignorm{\sum_{k=1}^mf_k}
    \sim\Bignorm{\Bigl(\sum_{k=1}^m\abs{f_k}^2\Bigr)^\frac12}
    \gtrsim\Bignorm{\Bigl(\sum_{k=1}^m\abs{f_k}^p\Bigr)^\frac1p}
    =\Bigl(\sum_{k=1}^m\abs{\alpha_k}^p\Bigr)^\frac1p.
  \end{displaymath}
  To prove the opposite inequality, let $\theta$ be such that
  \begin{math}
    \frac12=\frac{\theta}{1}+\frac{1-\theta}{p}.
  \end{math}
  Using H\"older's inequality in the form
  \begin{math}
    \bigl(\int f\bigr)^\lambda\bigl(\int g)^\mu
    \ge\int f^\lambda g^\mu
  \end{math}
  where $\lambda,\mu\ge 0$ with $\lambda+\mu=1$, and 
  $f$ and $g$ are two positive integrable functions, we get
  \begin{multline*}
    \Bignorm{\sum_{k=1}^m\abs{f_k}}^{p\theta}\cdot
    \Bignorm{\Bigl(\sum_{k=1}^m\abs{f_k}^p\Bigr)^\frac1p}
      ^{p(1-\theta)}
      =\biggl(\int\Bigl(\sum_{k=1}^m\abs{f_k}\Bigr)^p\biggr)^\theta
      \cdot\biggl(\int\sum_{k=1}^m\abs{f_k}^p\biggr)^{1-\theta}\\
      \ge\int\Bigl(\sum_{k=1}^m\abs{f_k}\Bigr)^{p\theta}\cdot
      \Bigl(\sum_{k=1}^m\abs{f_k}^p\Bigr)^{1-\theta}
      =\int\biggl[\Bigl(\sum_{k=1}^m\abs{f_k}\Bigr)^{2\theta}\cdot
      \Bigl(\sum_{k=1}^m\abs{f_k}^p\Bigr)^{\frac{2(1-\theta)}{p}}\biggr]
      ^{\frac{p}{2}}\\
      \ge\int\Bigl[\sum_{k=1}^m\abs{f_k}^{2\theta}\cdot
      \abs{f_k}^{2(1-\theta)}\Bigr]^{\frac{p}{2}}
      =\int\Bigl[\sum_{k=1}^m\abs{f_k}^2\Bigr]^\frac{p}{2},
  \end{multline*}
  which yields
  \begin{displaymath}
    \Bignorm{\Bigl(\sum_{k=1}^m\abs{f_k}^2\Bigr)^{\frac12}}
    \le\Bignorm{\sum_{k=1}^m\abs{f_k}}^{\theta}\cdot
    \Bignorm{\Bigl(\sum_{k=1}^m\abs{f_k}^p\Bigr)^\frac1p}
    ^{1-\theta}
    \sim\Bignorm{\sum_{k=1}^m f_k}^{\theta}\cdot
    \Bigl(\sum_{k=1}^m\abs{\alpha_k}^p\Bigr)
    ^{\frac{1-\theta}{p}}.
  \end{displaymath}
  It follows that
  \begin{displaymath}
    \Bignorm{\sum_{k=1}^m f_k}\lesssim
    \Bignorm{\sum_{k=1}^m f_k}^{\theta}\cdot
    \Bigl(\sum_{k=1}^m\abs{\alpha_k}^p\Bigr)
    ^{\frac{1-\theta}{p}},
  \end{displaymath}
  and, therefore,
  \begin{math}
    \Bignorm{\sum_{k=1}^m f_k}\lesssim
        \Bigl(\sum_{k=1}^m\abs{\alpha_k}^p\Bigr)
    ^{\frac{1}{p}}.
  \end{math}
\end{proof}

\begin{remark}
  Proposition~\ref{Rad-p-bib} shows that ``absolute'' cannot be
  replaced with ``permutable'' in \Cref{abs-Lp} when $p\neq 2$.
\end{remark}

\begin{example}
  Let $R$ be the subspace spanned by the Rademacher sequence in
  $L_p[0,1]$, $1\le p<\infty$ and $p\ne 2$. Then $R$ contains no
  absolute sequence. Indeed, if $(x_k)$ is a sequence in $R$ which is
  absolute as a sequence in $L_p[0,1]$ then $(x_k)$ is equivalent to
  the unit vector basis of $\ell_p$ by Theorem~\ref{abs-Lp}. However,
  $R$ is isomorphic to~$\ell_2$, hence it contains no isomorphic copy
  of~$\ell_p$.
\end{example}

\begin{example}
  \emph{A permutable bibasic sequence in $\ell_p$ with $p\ne 2$ which
    is not equivalent to the unit vector basis; in particular, it is
    not absolute.}  We construct such a sequence as a discretization
  of the Rademacher sequence.  Fix $1\le p<\infty$ with $p\ne 2$. Let
  $s\in\mathbb N$. There is a natural lattice isometric embedding
  $T\colon\ell_p^{2^s}\to L_p[0,1]$ via $Te_i=f_i/\norm{f_i}$, where
  $f_i=\one_{[\frac{i-1}{2^s},\frac{i}{2^s}]}$. Then $\Range T$
  consists of all dyadic functions of level up to $s$ in
  $L_p[0,1]$. In particular, it contains the first $s$ terms of the
  Rademacher sequence. Let $z_k=T^{-1}r_k$ as $k=1,\dots,s$. Then this
  finite sequence is $C_p$-equivalent to the unit vector basis
  of~$\ell_2^s$. Also, as in Proposition~\ref{Rad-p-bib},
  \begin{equation}\label{disr-Rad-est}
    \biggnorm{\bigvee_{n=1}^{s}\Bigabs{\sum_{k=1}^n\alpha_kz_k}}_{\ell_p^{2^s}}
    =\biggnorm{\bigvee_{n=1}^{s}\Bigabs{\sum_{k=1}^n\alpha_kr_k}}_{L_p[0,1]}
    \le C_p'\Bignorm{\sum_{k=1}^s\alpha_kz_k}_{\ell_p^{2^s}}.
  \end{equation}
  Relabel $z_k$ as~$z^{(s)}_k$.

  Now view $\ell_p$ as
  \begin{math}
    \Bigl(\bigoplus_{s=1}^\infty\ell_p^{2^s}\Bigr)_{\ell_p}.
  \end{math}
  Merge the sequences $(z^{(s)}_k)_{k=1}^s$ into a sequence
  in~$\ell_p$; denote it~$(x_j)$. We claim that $(x_j)$ is bibasic. It
  suffices to verify the bibasis inequality. Let $x$ be a finite
  linear combination of $x_j$'s. We may assume that $x$ is of the form
  \begin{math}
    \sum_{s=1}^m\sum_{k=1}^s\alpha^{(s)}_kz^{(s)}_k.    
  \end{math}
  Since the inner blocks have disjoint supports, the left hand side of
  the bibasis inequality may be written as
  \begin{multline*}
    \biggnorm{\sum_{s=1}^m\bigvee_{n=1}^s\Bigabs
      {\sum_{k=1}^n\alpha^{(s)}_kz^{(s)}_k}}
    =\Biggl(\sum_{s=1}^m\biggnorm{\bigvee_{n=1}^s\Bigabs
      {\sum_{k=1}^n\alpha^{(s)}_kz^{(s)}_k}}^p\Biggr)^{\frac{1}{p}}\\
     \le C_p'\Biggl(\sum_{s=1}^m\Bignorm{
       \sum_{k=1}^s\alpha^{(s)}_kz^{(s)}_k}^p\Biggr)^{\frac{1}{p}}
     =C_p'\norm{x}.
  \end{multline*}
  Hence, $(x_j)$ is bibasic. On the other hand, since
  $(z^{(s)}_k)_{k=1}^s$ is $C_p$-equivalent to the unit vector basis
  of $\ell_2^s$ for every~$s$, the sequence $(x_j)$ is not equivalent
  to the unit vector basis of~$\ell_p$.

  Finally, we sketch the proof that $(x_j)$ is permutable. We will use
  Proposition~\ref{perm-eq}. As in the proof of
  Proposition~\ref{Rad-p-bib}, the estimate in~\eqref{disr-Rad-est}
  remains valid if we permute the sequence
  $\bigl(z^{(s)}_k\bigr)_{k=1}^s$.  Fix indices $k_1,\dots,k_m$ and
  coefficients $\alpha_1,\dots,\alpha_m$. Then
  \begin{displaymath}
    \bigvee_{n=1}^m\Bigabs{\sum_{i=1}^n\alpha_ix_{k_i}}
    =\sum_{s=1}^\infty\bigvee_{n=1}^m\biggabs{
      \sum_{\substack{i=1,\dots,n\\2^s\le k_i<2^{s+1}}}\alpha_ix_{k_i}}
  \end{displaymath}
  where the sum over $s$ has only finitely many non-zero
  terms. Moreover, the terms are pair-wise disjoint, so that, using
  the permuted version of~\eqref{disr-Rad-est}, we get
  \begin{multline*}
    \biggnorm{\bigvee_{n=1}^m\Bigabs{\sum_{i=1}^n\alpha_ix_{k_i}}}
    =\Biggl(\sum_{s=1}^\infty\biggnorm{\bigvee_{n=1}^m\biggabs{
      \sum_{\substack{i=1,\dots,n\\2^s\le k_i<2^{s+1}}}\alpha_ix_{k_i}}}^p
    \Biggr)^{\frac{1}{p}}\\
    \le C_p'\Biggl(\sum_{s=1}^\infty\Bignorm{\sum_{\substack
        {i=1,\dots,m\\2^s\le k_i<2^{s+1}}}\alpha_ix_{k_i}}^p
    \Biggr)^{\frac1p}
    =C_p'\Bignorm{\sum_{i=1}^m\alpha_ix_{k_i}}.
  \end{multline*}
\end{example}

The basic sequence, constructed in the previous example, is clearly not
a basis. Recall that every permutable basis in $\ell_1$ is
unconditional and, therefore, absolute by
Proposition~\ref{l1-abs}. This motivates the following question:

\begin{question}
  Is there a basis of $\ell_p$ $(1<p<\infty$) which is permutable but
  not absolute?
\end{question}

\begin{example}
  \emph{The Walsh sequence and Krengel's operator.} While the preceding
  example deals with a discretization of the Rademacher sequence, in this
  example we consider the Walsh sequence and its discretization. Let
  $(w_k)$ be the Walsh sequence in $L_2[0,1]$ with its standard
  enumeration as in, e.g.,~\cite{Wade:82}. Then $(w_k)$ is an
  orthonormal basis of $L_2[0,1]$. It is shown in \cite{Hunt:70} (see,
  also, \cite[p.~631]{Wade:82}) that there is a constant $M$ such that
  for every $f\in L_2[0,1]$ with Walsh-Fourier expansion
  $f=\sum_{k=0}^\infty\alpha_kw_k$ one has
  \begin{math}
    \Bignorm{\bigvee_{n=0}^\infty\bigabs{\sum_{k=0}^n
        \alpha_kw_k}}\le M\norm{f}.
  \end{math}
  It follows from Bibasis Theorem~\ref{bib} that $(w_k)$ is a
  bibasis.

  It is easy to see that $(w_k)$ is not absolute. Indeed, since it is
  an orthonormal basis of $L_2[0,1]$, we have
  \begin{math}
    \Bignorm{\sum_{k=0}^{n-1}w_k}=\sqrt{n}.
  \end{math}
  On the other hand, it follows from $\abs{w_k}=\one$ that
  \begin{math}
    \Bignorm{\sum_{k=0}^{n-1}\abs{w_k}}=n.
  \end{math}
  Hence, the two expressions are not equivalent.

  The Walsh sequence is closely related to the classical example of an
  operator $T\colon\ell_2\to\ell_2$ which is not order bounded, due to
  Krengel; see, e.g., Example~5.6 in~\cite{Aliprantis:06}. The
  operator is defined as follows.  For each $n=0,1,2,\dots$, define
  the Hadamar matrix $H_n$ as follows: $H_0=(1)$,
  \begin{math}
    H_{n+1}=\bigl[
    \begin{smallmatrix}
      H_n & H_n\\
     H_n & -H_n
    \end{smallmatrix}
    \bigr].
  \end{math}

  Put $T_n=2^{-\frac{n}{2}}H_n$. Viewed as an operator on
  $\ell_2^{2^n}$, $T_n$ is a surjective isometry. We view
  $\ell_2=\bigoplus_{n=0}^\infty\ell_2^{2^n}$ with
  $T=\bigoplus_{n=0}^\infty T_n$.  Then $T$ is a surjective isometry,
  and, therefore, the sequence $(Te_k)$ is an orthonormal basis
  in~$\ell_2$. Is it bibasic? Since $T$ fails to be order bounded (it
  even fails to be sequentially uniformly continuous), we cannot apply
  Theorem~\ref{sequcont}.

  Let $W_n$ be the matrix obtained from $H_n$ be ordering its columns
  by the number of sign changes; $W_n$ is called the Walsh matrix of
  order~$n$. The columns of $W_n$ viewed as a sequence in
  $\ell_2^{2^n}$ correspond to $(w_k)_{k=0}^{2^n-1}$. Replacing $H_n$
  in the construction of $T$ with~$W_n$, we obtain another surjective
  isometry on~$\ell_2$; denote it by~$S$. The sequence $(Se_k)$ is a
  permutation of $(Te_k)$. We claim that $(Se_k)$ is a bibasis. Note
  that this sequence comes in pairwise disjoint blocks. Within the
  $n$-th block, the sequence $(Se_k)$ may be identified with
  $(w_k)_{k=0}^{2^n-1}$, hence it satisfies the bibasis inequality
  with constant $M$ as before. Thus, $(Se_k)$ is a bibasis.

  Since $W_n$ is obtained by permuting columns in~$H_n$, we have
  $H_n=W_nP_n$ for some permutation matrix~$P_n$. Since both $H_n$ and
  $W_n$ are symmetric, we have $H_n=P_n^TW_n$, so that $T=US$, where
  $U$ is a permutation of the standard basis in~$\ell_2$. It follows
  that $Te_k=U(Se_k)$. Since $U$ is clearly a surjective isometry and
  a lattice homomorphism, Theorem~\ref{sequcont} yields that
  $(Te_k)$ is a bibasis.

  Now let $1< p<\infty$ with $p\ne 2$. In this case, the Walsh
  sequence $(w_k)$ forms a conditional basis in $L_p[0,1]$; see, e.g.,
  \cite[p.~6]{Popov:13} or \cite[pp. 23--24]{Muller:05}. It was shown
  in~\cite{Sjolin:69} that $(w_k)$ satisfies the bibasis inequality,
  hence it is a bibasis. As in the preceding paragraphs, one can
  construct a discretized version of $(w_k)$ in~$\ell_p$; it is easy
  to see that it is a conditional bibasis of~$\ell_p$. Our
  investigation leaves open the existence of a conditional bibasis in
  $\ell_1$, $\ell_2$, and $L_2[0,1]$.
\end{example}

\section{Bibasic sequences with unique order expansions}

A sequence $(x_k)$ in a Banach lattice $X$ is said to have
\term{unique order expansions} if $\tos\alpha_kx_k=\tos\beta_kx_k$
implies that $\alpha_k=\beta_k$ for every~$k$. Clearly, this is
equivalent to zero having a unique order expansion. In particular,
$(x_k)$ is an order basis if every vector has a unique order
expansion. It is easy to see that every bibasic sequence in an order
continuous Banach lattice has unique order expansions.

\begin{remark}\label{sigma-sester}
  In the definition of unique order expansions one must choose whether
  to use order or $\sigma$-order-convergence. We will work with
  order convergence; the reader who prefers $\sigma$-order convergence can
  make the appropriate modifications. For bibases, we do not know if
  the choice of order convergence matters:
\end{remark}


\begin{question}
  Suppose $(x_k)$ is a bibasis with unique $\sigma$-order
  expansions. Does $(x_k)$ have unique order expansions?
\end{question}

\begin{example}\label{nonuniq linfty}
  Let $X=c$, the space of all convergent sequences. Let
  $e_0=(1,1,\dots)$. Then $(e_n)_{n\ge 0}$ is a basis and, therefore,
  a bibasis of~$c$. However, $e_0=\tos e_k$, hence $e_0$ has multiple
  order expansions. Notice, in contrast, that the basis
  $(x_k)_{k\ge 1}$ of $c$ with
\begin{math}
      x_k=(0,\dots,0,1,1,1,\dots)
  \end{math}
  has unique
  order expansions.
\end{example}

\begin{example}
  \emph{Uniqueness of order expansions depends on the ambient space.}
  Let $(x_k)$ be the Schauder system in $C[0,1]$. Since $C[0,1]$ is an
  AM-space, $(x_k)$ is a bibasis.  Yet, it fails to have unique order
  expansions by Example~\ref{C01-Sch}. We are going to construct a
  Banach lattice $X$ such that $C[0,1]$ is a closed sublattice of $X$
  and $(x_k)$ has unique order expansions relative to~$X$.

  For a compact Hausdorff space~$K$, we put $c_0(K)$ to be the space of
  real-valued functions $f$ on $K$ such that the set
  $\bigl\{\abs{f}>\varepsilon\bigr\}$ is finite for every
  $\varepsilon>0$. In particular, $c_0(K)$ contains all the functions
  with finite support. One defines $CD_0(K)$ as the space of
  functions of the form $f+g$ where $f\in C(K)$ and $g\in c_0(K)$. It
  is known that $CD_0(K)$ is an AM-space; $C(K)$ is a norm closed
  sublattice of $CD_0(K)$. We refer the reader
  to~\cite{Abramovich:93,Troitsky:04} and references therein for basic
  properties of $CD_0(K)$-spaces.
 
  Put $X=CD_0[0,1]$.  We claim that order expansions of $(x_k)$ 
  with respect to $CD_0[0,1]$ are unique. Suppose that $\tos\alpha_kx_k=0$
  in $CD_0[0,1]$. Let $s_n=\sum_{k=1}^n\alpha_kx_k$. Note that
  $s_n(0)=\alpha_1$ for every~$n$. It follows that
  $\bigabs{\alpha_1\one_{\{0\}}}\le\abs{s_n}\goeso 0$ in $CD_0[0,1]$,
  hence $\alpha_1=0$. Therefore, $s_n(1)=\alpha_2$ and, therefore,
  $\bigabs{\alpha_2\one_{\{1\}}}\le\abs{s_n}$ for every $n\ge 2$. It
  follows from $s_n\goeso 0$ in $CD_0[0,1]$ that $\alpha_2=0$. It
  follows that $s_n(\frac12)=\alpha_3$ and, therefore,
  $\bigabs{\alpha_3\one_{\{\frac12\}}}\le\abs{s_n}$ for all $n\ge
  3$. This yields $\alpha_3=0$. Proceeding inductively, $\alpha_k=0$
  for all~$k$.
\end{example}

While in general the concept of a bibasic sequence with unique order
expansions may depend on the ambient space, here we present an
interesting example where it does not. Recall that every basic
sequence in $c_0$ is bibasic with unique order expansions.

\begin{theorem}
  Let $(x_k)$ be a basic sequence in~$c_0$. Viewed as a sequence
  in~$\ell_\infty$, it is bibasic with unique order expansions.
\end{theorem}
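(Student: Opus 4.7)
Proof proposal. The bibasic part is immediate: since $c_0$ is an AM-space, Remark~\ref{AM-basic-bib} gives that $(x_k)$ is bibasic in $c_0$, and Corollary~\ref{ambient} then yields that it is bibasic in $\ell_\infty$. So the plan is to establish unique order expansions in $\ell_\infty$.

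Suppose $\tos\alpha_k x_k=0$ in $\ell_\infty$ and set $s_n=\sum_{k=1}^n\alpha_k x_k$, so that $s_n\goeso 0$ in $\ell_\infty$; I aim to show $\alpha_k=0$ for every $k$. The strategy is to convert this order convergence in $\ell_\infty$ into weak convergence of $(s_n)$ inside $c_0$, and then apply the coordinate functionals of the basic sequence $(x_k)$ to read off each coefficient.

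Choose a net $u_\gamma\downarrow 0$ in $\ell_\infty$ such that $\abs{s_n}\le u_\gamma$ eventually. Fixing one $\gamma_0$ immediately gives a uniform bound $\norm{s_n}_\infty\le M$ for all $n$. Since order and lattice operations in $\ell_\infty$ are coordinatewise, $u_\gamma(i)\downarrow 0$ and hence $s_n(i)\to 0$ for each fixed coordinate $i$. Because each $x_k$ lies in $c_0$, each partial sum $s_n$ lies in $c_0$; combining $s_n(i)\to 0$ with the uniform bound $M$, dominated convergence applied to the series $\sum_i y_i s_n(i)$ with $y\in\ell_1=c_0^*$ yields $\langle y,s_n\rangle\to 0$. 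Thus $s_n\to 0$ weakly in $c_0$.

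The coordinate functionals $x_k^*$ of the basic sequence are bounded linear functionals on $[x_k]\subseteq c_0$; extend them to $c_0$ by Hahn-Banach. Weak convergence of $(s_n)$ gives $x_k^*(s_n)\to 0$ for each $k$, but $x_k^*(s_n)=\alpha_k$ whenever $n\ge k$, so $\alpha_k=0$. The only conceptually non-routine step is the reduction to weak convergence in $c_0$; once that is in hand, the basis-coefficient identification finishes the argument, and I do not anticipate a serious obstacle.
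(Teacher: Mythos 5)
Your proof is correct, and it takes a genuinely different --- and shorter --- route than the paper's. Both arguments begin the same way: order convergence of the partial sums $(s_n)$ to zero in $\ell_\infty$ yields norm boundedness together with coordinatewise convergence to zero (infima in $\ell_\infty$ are computed coordinatewise, so the dominating net $u_\gamma\downarrow 0$ really does decrease to zero in each coordinate). From there the paper argues by contradiction: a gliding-hump construction extracts an almost disjoint, seminormalized subsequence of the partial sums, which is therefore equivalent to the unit vector basis of $c_0$, and this forces the sequence $(e_1,e_2-e_1,e_3-e_2,\dots)$ to be basic in $c_0$ --- which it is not, since it fails the basis inequality. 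You instead observe that a norm-bounded, coordinatewise null sequence lying in $c_0$ is weakly null in $c_0$ (because $c_0^*=\ell_1$, via the truncation/dominated-convergence estimate), and then Hahn--Banach extensions $\hat{x}_k^*$ of the coordinate functionals give $\alpha_k=\lim_n\hat{x}_k^*(s_n)=0$, since $\hat{x}_k^*(s_n)$ is eventually the constant $\alpha_k$. This isolates a clean general principle --- if the partial sums of a basic expansion are weakly null, then all coefficients vanish --- and avoids the combinatorial work entirely; the only place the special structure of the pair $(c_0,\ell_\infty)$ enters is in upgrading order convergence in $\ell_\infty$ to weak convergence in $c_0$. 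The paper's argument is longer but exhibits concretely the disjoint ``humps'' that a nontrivial order expansion of zero would have to produce. One cosmetic point: your uniform bound $\norm{s_n}_\infty\le M$ is obtained only for $n$ beyond some $n_0$; the finitely many earlier terms are of course bounded as well, so this is harmless.
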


\begin{proof}
  Clearly, $(x_k)$ is bibasic in~$\ell_\infty$.  Suppose that there
  exists a sequence $(\alpha_k)$ of coefficients, not all of them
  zero, such that $\tos\alpha_kx_k=0$ in~$\ell_\infty$. WLOG,
  $\tos x_k=0$; otherwise, pass to the subsequence of those $x_k$'s for
  which $\alpha_k\ne 0$ and replace $x_k$ with $\alpha_kx_k$.

  Put $s_n=\sum_{k=1}^nx_k$. Then $s_n\goeso 0$ in~$\ell_\infty$. In
  particular, $(s_n)$ converges to zero coordinate-wise and $(s_n)$ is
  order bounded in $\ell_\infty$ and, therefore, norm bounded. Since
  $(x_k)$ is basic, the zero vector has no non-trivial norm
  expansions, so that $(s_n)$ does not converge to zero. It follows
  that there exists $\delta>0$ such that $\norm{s_n}>\delta$ for
  infinitely many values of~$n$.

  Fix a sequence $(\varepsilon_m)$ in $(0,\delta/2)$ such that
  $\varepsilon_m\to 0$.  We will use a variant of a ``gliding hump''
  technique to find an ``almost disjoint'' subsequence of~$(s_n)$. Let
  $P_n\colon\ell_\infty\to\ell_\infty$ be the projection onto the
  first $n$ coordinates; let $Q_n=I-P_n$.

  Choose $n_1$ so that $\norm{s_{n_1}}>\delta$. Since $s_{n_1}$ is
  in~$c_0$, there exists $k_1$ such that
  $\norm{Q_{k_1}s_{n_1}}<\varepsilon_1$. Put $v_1=P_{k_1}s_{n_1}$,
  then $\supp v_1\subseteq[1,k_1]$ and
  $\norm{s_{n_1}-v_1}<\varepsilon_1$. Since $(s_n)$ converges to zero
  coordinate-wise, we can find $n_2>n_1$ such that
  $\norm{P_{k_1}s_{n_2}}<\varepsilon_2$ and
  $\norm{s_{n_2}}>\delta$. Since $s_{n_2}\in c_0$, find $k_2$ such
  that $\norm{Q_{k_2}s_{n_2}}<\varepsilon_2$. Put
  $v_2=P_{k_2}Q_{k_1}s_{n_2}$. Then $\supp v_2\subseteq[k_1+1,k_2]$
  and $\norm{s_{n_2}-v_2}<\varepsilon_2$.

  Proceeding inductively, we produce a subsequence $(s_{n_m})$ of
  $(s_n)$ and a sequence $(v_m)$ such that $\norm{s_{n_m}}>\delta$,
  $\norm{s_{n_m}-v_m}<\varepsilon_m$, and $\supp v_m<\supp v_{m+1}$
  for every~$m$. It follows from $\varepsilon_m<\delta/2$ that
  $\norm{v_m}>\delta/2$. Being a disjoint seminormalized sequence
  in~$c_0$, $(v_m)$ is basic and is equivalent to~$(e_m)$. Passing to
  a further subsequence if necessary, we may assume that $(s_{n_m})$
  is also basic and is equivalent to $(v_m)$ and, therefore,
  to~$(e_m)$. Hence, there is an isomorphic embedding
  $T\colon c_0\to c_0$ with $Te_m=s_{n_m}$. Put
  $W=\Range T=[s_{n_m}]$.

  Put $y_1=s_{n_1}$ and $y_m=s_{n_m}-s_{n_{m-1}}$ when $m>1$. Then
  $y_m\in W$ and $y_m=\sum_{k=n_{m-1}+1}^{n_m}x_k$ for every~$m$. It
  follows that $(y_m)$ is a block sequence of~$(x_k)$, hence is a
  basic sequence. It follows from $T^{-1}y_1=e_1$ and
  $T^{-1}y_m=e_m-e_{m-1}$ that the sequence
  $(e_1,e_2-e_1,e_3-e_2,\dots)$ is basic. This is a contradiction
  because this sequence fails the basis inequality. Indeed, for
  every~$m$, we have
  \begin{displaymath}
    e_1+\tfrac{m-1}{m}(e_2-e_1)+\tfrac{m-2}{m}(e_3-e_2)+
    \dots+\tfrac{1}{m}(e_m-e_{m-1})=\tfrac1m(e_1+\dots+e_m),
  \end{displaymath}
  hence this vector has norm~$\frac1m$, while $\norm{e_1}=1$.
\end{proof}

\begin{remark}
  \Cref{nonuniq linfty} shows that one cannot replace $c_0$ with $c$
  in the above theorem.
\end{remark}

\begin{example}\label{not sester}
  \emph{For bibasic sequences, uniqueness of order expansions is not
    always preserved under small perturbations.}  Let $X=c$. Put
  $y_1=(0,1,1,\dots)$ and $y_k=e_k$ as $k\ge 2$. Clearly, $(y_k)$ is
  basic; since $c$ is an AM-space, it follows that $(y_k)$ is
  bibasic. However, it fails to have unique order expansions as
  $y_1=\prescript{{\rm o}}{}{\sum_{k=2}^\infty\,}y_k$. Let
  $x_1=y_1+\varepsilon e_1$ and $x_k=y_k$ when $k\ge 2$. Picking
  $\varepsilon>0$ sufficiently small, $(y_k)$ is a small perturbation
  of~$(x_k)$, yet $(x_k)$ has unique order expansions.  By amplifying
  this example to the $c_0$-sum of infinitely many copies of $c$ with
  the $\varepsilon$-perturbation in the $n$-th copy of $(x_k)$ going
  to zero sufficiently fast, and then re-enumerating the resulting
  sequence, we can produce a normalized bibasic sequence $(x_k)$ with
  unique order expansions such that for every $\delta>0$ one can find
  a bibasic sequence $(y_k)$ such that
  $\sum_{k=1}^\infty\norm{x_k-y_k}<\delta$ and, nevertheless, $(y_k)$
  fails to have unique order expansions.
\end{example}

We next identify a combination of conditions which guarantees
stability under small perturbation. For a bibasic sequence $(x_k)$ in
a Banach lattice~$X$, we write $[x_k]^o$ for the set of all vectors
$x\in X$ which admit an order expansion of the form
$x=\tos\alpha_kx_k$. Since $(x_k)$ is bibasic, it is immediate that
$[x_k]\subseteq [x_k]^o$. We say that a sequence $(x_k)$ is
\term{sester-basic} if it is bibasic, has unique order expansions, and
$[x_k]=[x_k]^o$. Clearly, every basis with unique order expansions is
sester-basic; if $X$ is order continuous then every bibasic sequence
is sester-basic.  The proof of the following proposition is
straightforward.

\begin{proposition}\label{ses}
  For a sequence~$(x_k)$, TFAE:
  \begin{enumerate}
  \item\label{ses-ses} $(x_k)$ is sester-basic;
  \item\label{ses-bib} $(x_k)$ is bibasic and $\s\alpha_kx_k$ converges
    whenever $\tos\alpha_kx_k$ converges;
  \item\label{ses-iff} $(x_k)$ is basic, and $\s\alpha_kx_k$ converges
    iff $\tos\alpha_kx_k$ converges.
  \end{enumerate}
\end{proposition}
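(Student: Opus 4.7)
The proposition is essentially a bookkeeping exercise built on the equivalence \eqref{bib-oconv} of Theorem~\ref{bib} and the observation (following Lemma~\ref{nconv-subseq}) that when both $\s\alpha_kx_k$ and $\tos\alpha_kx_k$ converge, they must converge to the same sum. My plan is to prove the cycle (i)$\Rightarrow$(ii)$\Rightarrow$(iii)$\Rightarrow$(i).

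For (i)$\Rightarrow$(ii), suppose $(x_k)$ is sester-basic and that $x=\tos\alpha_kx_k$ converges. By the assumption $[x_k]=[x_k]^o$, we have $x\in[x_k]$, so there exist coefficients $(\beta_k)$ with $x=\s\beta_kx_k$. Since $(x_k)$ is bibasic, \eqref{bib-oconv} of Theorem~\ref{bib} gives $x=\tos\beta_kx_k$ as well, and then uniqueness of order expansions forces $\alpha_k=\beta_k$ for all $k$, so $\s\alpha_kx_k$ converges.

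The implication (ii)$\Rightarrow$(iii) is immediate: bibasic implies basic, and the forward direction of the ``iff'' is just \eqref{bib-oconv} of Theorem~\ref{bib}, while the backward direction is the hypothesis of (ii).

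For (iii)$\Rightarrow$(i), the main point is to recover the bibasic property from the weaker hypothesis that $(x_k)$ is merely Schauder basic. Given a norm-convergent series $x=\s\alpha_kx_k$, the ``only if'' direction of (iii) yields convergence of $\tos\alpha_kx_k$ to some vector $y$; by the uniqueness observation recalled above, $x=y$, so $(x_k)$ satisfies \eqref{bib-oconv} of Theorem~\ref{bib} and is therefore bibasic. The remaining two conditions are then routine: if $x=\tos\alpha_kx_k$, the ``if'' direction of (iii) gives $\s\alpha_kx_k$ convergent, and both sums agree, so $x\in[x_k]$, giving $[x_k]^o\subseteq[x_k]$ (the reverse inclusion is automatic from bibasic). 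Finally, if $\tos\alpha_kx_k=\tos\beta_kx_k$, applying (iii) twice converts this into an equality of norm-convergent series, and uniqueness of the Schauder expansion forces $\alpha_k=\beta_k$.

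There is no real obstacle here; the whole argument amounts to carefully noting which direction of each implication uses bibasicness versus the ambient hypothesis of~(iii). The only place where one must be slightly careful is in (iii)$\Rightarrow$(i), where ``basic'' must be upgraded to ``bibasic'' before the uniqueness and closure claims are available — and this is precisely where the sum-matching consequence of Lemma~\ref{nconv-subseq} does the work.
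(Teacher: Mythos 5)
Your proof is correct. The paper omits the argument as ``straightforward,'' and your cycle (i)$\Rightarrow$(ii)$\Rightarrow$(iii)$\Rightarrow$(i), using Theorem~\ref{bib}\eqref{bib-oconv} together with the observation after Lemma~\ref{nconv-subseq} that a series converging both in norm and in order has the same sum, is exactly the intended routine verification.
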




  


\begin{proposition}\label{strong pert}
  Let $(x_k)$ be a sester-basic sequence in a Banach lattice $X$ with
  basis constant~$K$; let $(y_k)$ be a sequence in $X$ such that
  \begin{displaymath}
    2K\sum_{k=1}^\infty \frac{\norm{x_k-y_k}}{\norm{x_k}}<1.
  \end{displaymath}
  Then $(y_k)$ is sester-basic.
\end{proposition}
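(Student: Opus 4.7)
The proof will reduce the question to the sester-basic hypothesis on $(x_k)$ by transferring back and forth between the $x_k$- and $y_k$-expansions of a common sequence of coefficients. First, by \Cref{biPSP}, $(y_k)$ is already bibasic, so in view of \Cref{ses} it suffices to show that whenever $\tos\alpha_ky_k$ converges in order to some $y\in X$, the norm series $\sum_{k=1}^\infty\alpha_ky_k$ also converges. Writing $s_n=\sum_{k=1}^n\alpha_kx_k$ and $t_n=\sum_{k=1}^n\alpha_ky_k$, the plan is to upgrade the order convergence of $(t_n)$ to order convergence of $(s_n)$, then invoke \Cref{ses}\eqref{ses-bib} for the sester-basic $(x_k)$ to produce norm convergence of $(s_n)$, and finally descend to norm convergence of $(t_n)$.

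For the quantitative part I would first note that, since $t_n\goeso y$, the sequence $(t_n)$ is order bounded---say $\abs{t_n}\le v$ for some $v\in X_+$. Mimicking the proof of \Cref{biPSP}, the basis-constant estimate $\abs{\alpha_k}\norm{x_k}\le 2K\norm{s_j}$ (valid for $k\le j$) together with the element $u:=\sum_{k=1}^\infty\frac{\abs{x_k-y_k}}{\norm{x_k}}$ (of norm at most $\theta/(2K)$) yields $\abs{s_j-t_j}\le 2K\norm{s_j}u$. Combining this with $\abs{s_j}\le v+\abs{s_j-t_j}$ and taking norms gives the a priori estimate $\norm{s_j}\le\norm{v}/(1-\theta)$ uniformly in $j$. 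The same estimate applied to tails, for the difference $z_n:=s_n-t_n$, then produces $\abs{z_n-z_m}\le\frac{2K\norm{v}}{1-\theta}r_m$ for $n\ge m$, where $r_m:=\sum_{k=m+1}^\infty\frac{\abs{x_k-y_k}}{\norm{x_k}}$.

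The closing move is the observation that $r_m\downarrow 0$ in $X$---the sequence $(r_m)$ is positive, decreasing, and norm-null, so its infimum vanishes. This makes $(z_n)$ simultaneously norm Cauchy and ``order Cauchy'': letting $z$ denote its norm limit and passing $n\to\infty$ in the tail estimate gives $\abs{z-z_m}\le\frac{2K\norm{v}}{1-\theta}r_m\downarrow 0$, so $z_n\goeso z$ as well. Then $s_n=t_n+z_n\goeso y+z$; the sester-basic property of $(x_k)$ forces $\sum_{k=1}^\infty\alpha_kx_k$ to converge in norm, and $t_n=s_n-z_n$ inherits norm convergence, as required. The genuinely delicate point is the promotion of $(z_n)$ from norm to order convergence---without it, the order convergence of $(s_n)$ would be unavailable and the sester-basic hypothesis on the $x$-side could not be brought to bear.
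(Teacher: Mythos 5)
Your argument is correct and follows essentially the same route as the paper: reduce via \Cref{biPSP} and \Cref{ses} to transferring order convergence from the $y$-expansion to the $x$-expansion, show that the difference series $\sum_{k=1}^\infty\alpha_k(x_k-y_k)$ converges in order, and then transfer norm convergence back. The only (cosmetic) differences are that the paper obtains the coefficient bound by normalizing $(x_k)$ and noting that order boundedness of the partial sums makes $(\alpha_ky_k)$ norm bounded, and then invokes \Cref{nconv-subseq} for the order convergence of the absolutely convergent difference series, whereas you re-derive that step by hand with the explicit decreasing majorant $r_m$.
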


\begin{proof}
  By Principle of Small Perturbations and Theorem~\ref{biPSP}, $(y_k)$
  is bibasic and equivalent to~$(x_k)$. WLOG $(x_k)$ is normalized,
  hence $(y_k)$ is semi-normalized. Suppose $\tos\alpha_ky_k$
  converges; it suffices to show that $\s\alpha_ky_k$ converges. The
  partial sums $\bigl(\sum_{k=1}^n \alpha_ky_k\bigr)_n$ are order
  bounded, hence $(\alpha_k y_k)$ is order and norm bounded, hence
  $(\alpha_k)$ is bounded.  Put
  $z:=\sum_{k=1}^\infty \alpha_k(x_k-y_k)$. This series converges
  absolutely. Lemma~\ref{nconv-subseq} yields that it converges
  uniformly and, therefore, in order. It follows that the series
  $\tos\alpha_kx_k$ converges, which implies the convergence of
  $\s\alpha_kx_k$. Since $(y_k)\sim(x_k)$, the series $\s\alpha_ky_k$
  converges.
\end{proof}

\begin{example}\label{sester-dep-amb}
    \emph{The
    property of being sester-basic depends on the ambient space.}
    The standard unit vector basis $(e_k)$ of $c_0$ is
    sester-basic in $c_0$ but not in $\ell_\infty$ because in
    $\ell_\infty$ we have $c_0=[e_k]\neq[e_k]^o=\ell_\infty$. This
    example also shows that the inclusion $[x_k]\subseteq[x_k]^o$ may
    be proper.
\end{example}

\begin{question}
  It was observed in Example~\ref{C01-Sch} that the Schauder system in
  $C[0,1]$ is not a sester-basis. Does $C[0,1]$ admit a sester-basis?
  More generally, does every Banach lattice with a bibasis admit a
  sester-basis?
\end{question}

\begin{question}
  Does every block sequence of a bibasis with unique order expansions
  again have unique order expansions? If so, is it sester-basic? More
  generally, is every block sequence of a sester-basic sequence again
  sester-basic?
\end{question}

\section{Uo-bibasic sequences}

Recall that a net $(x_\alpha)$ in an Archimedean vector lattice $X$ is
said to \term{uo-converge} to $x$ if
$\abs{x_\alpha-x}\wedge u\goeso 0$ for every $u\ge 0$; in this case,
we write $x_\alpha\goesuo x$. Clearly, order convergence implies
uo-convergence; the two convergences agree for order bounded nets. If
$Y$ is a regular sublattice of $X$ (in particular, if $Y$ is an ideal
of $X$ or if $Y$ is a closed sublattice and $X$ is an order continuous
Banach lattice) and $(x_\alpha)$ is a net in $Y$ then
$x_\alpha\goesuo 0$ in $Y$ iff $x_\alpha\goesuo 0$ in~$X$. For
sequences in $L_p(\mu)$ spaces with $\mu$ semi-finite, uo-convergence
agrees with convergence almost everywhere. We refer the reader
to~\cite{Gao:17} and references therein for background on
uo-convergence; see also~\cite{Papangelou:64,Fremlin:04}.

Motivated by the definition of a bibasic sequence, we say that a
sequence $(x_k)$ in a Banach lattice $X$ is \term{uo-bibasic} if it is basic
and for each $x\in [x_k]$ the sequence
of partial sums of $x$ uo-converges to~$x$. It is clear that every bibasic
sequence is uo-bibasic.

\begin{example}
In an atomic Banach lattice uo-convergence agrees with point-wise convergence, so every basic sequence is uo-bibasic. In particular,
  the class of uo-bibasic sequences is much larger than the class of
  bibasic sequences, even in $\ell_p$ ($p<\infty$). 
\end{example}

\begin{example}
  The Haar basis $(h_k)$ in its standard ordering is a uo-bibasis in
  $L_p[0,1]$ when $1\le p<\infty$.  In the case when $p>1$, it follows
  from the fact that $(h_k)$ is a bibasis; when $p=1$ the statement
  follows from, e.g., Theorem~4 in \cite[p.~68]{Kashin:89}.
\end{example}

In general, we do not know whether the property of being a uo-bibasic
sequence depends on the ambient space. However, if $Y$ is a closed
regular sublattice of $X$ and $(x_k)$ is a sequence in~$Y$, it is
clear that it is uo-bibasic in $Y$ iff it is uo-bibasic in~$X$.

It can be easily verified that a block sequence of a uo-bibasic
sequence is again uo-bibasic.  We next show that uo-bibasic
sequences are stable under small perturbations; the proof is analogous
to that of Theorem~3.2 in~\cite{Gumenchuk:15}.

\begin{proposition}
  Let $(x_k)$ be a uo-bibasic sequence in a Banach lattice $X$ with
  basis constant~$K$; let $(y_k)$ be a sequence in $X$ such that
  $2K\s\frac{\norm{x_k-y_k}}{\norm{x_k}}<1$. Then $(y_k)$ is uo-bibasic.
\end{proposition}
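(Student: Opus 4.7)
The plan is to mirror the proof of \Cref{biPSP}, with uniform convergence playing the role that order boundedness or the bibasis inequality played there. First I would invoke the Principle of Small Perturbations (see, e.g., the argument in the proof of \Cref{biPSP}): under the hypothesis $2K\s\frac{\norm{x_k-y_k}}{\norm{x_k}}=:\theta<1$, the sequence $(y_k)$ is basic and equivalent to $(x_k)$. In particular, $(y_k)$ is seminormalized and a series $\s\alpha_ky_k$ converges iff $\s\alpha_kx_k$ converges, with the two sums linked by a norm-isomorphism $[x_k]\to[y_k]$.

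Now fix $y=\s\alpha_ky_k\in[y_k]$ and put $x=\s\alpha_kx_k\in[x_k]$. Since $(x_k)$ is uo-bibasic, the partial sums $\sum_{k=1}^n\alpha_kx_k$ uo-converge to~$x$. To transfer this to the $y_k$'s, I would bound the coefficients using the standard basis inequality: $\abs{\alpha_k}\le\frac{2K\norm{x}}{\norm{x_k}}$ for every~$k$. Setting $u:=\s\frac{\abs{x_k-y_k}}{\norm{x_k}}$, the hypothesis guarantees that $\s\frac{\norm{x_k-y_k}}{\norm{x_k}}<\infty$, so by \Cref{nconv-subseq} this series converges in norm and uniformly to some $u\in X_+$.

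For $m<n$, the coefficient bound yields
\begin{displaymath}
  \biggabs{\sum_{k=m+1}^n\alpha_k(y_k-x_k)}
  \le\sum_{k=m+1}^n\abs{\alpha_k}\abs{y_k-x_k}
  \le 2K\norm{x}\sum_{k=m+1}^\infty\frac{\abs{y_k-x_k}}{\norm{x_k}},
\end{displaymath}
and the right-hand side tends uniformly to zero as $m\to\infty$. Hence the series $\s\alpha_k(y_k-x_k)$ converges uniformly, and, since both $\s\alpha_kx_k$ and $\s\alpha_ky_k$ converge in norm to $x$ and $y$ respectively, the uniform sum must be $y-x$. In particular, $\sum_{k=1}^n\alpha_k(y_k-x_k)\goesuo y-x$.

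Finally, since uo-convergence is linear, combining the two uo-convergent sequences gives
\begin{displaymath}
  \sum_{k=1}^n\alpha_ky_k
  =\sum_{k=1}^n\alpha_kx_k+\sum_{k=1}^n\alpha_k(y_k-x_k)
  \goesuo x+(y-x)=y,
\end{displaymath}
so $(y_k)$ is uo-bibasic. The only technical point requiring care is the coefficient estimate and the passage to the uniform tail-bound; everything else is a routine combination of \Cref{nconv-subseq} and the perturbation machinery already developed for \Cref{biPSP}.
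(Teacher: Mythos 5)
Your proposal is correct and follows essentially the same route as the paper: apply the Principle of Small Perturbations to get $(y_k)$ basic and equivalent to $(x_k)$, observe that the difference series $\s\alpha_k(x_k-y_k)$ converges absolutely (hence uniformly, hence in order and uo), and combine this with the uo-convergence of the partial sums of $x$ via the triangle inequality. The only cosmetic difference is that the paper normalizes $(x_k)$ so that $(\alpha_k)$ is bounded, while you keep the explicit coefficient bound $\abs{\alpha_k}\le 2K\norm{x}/\norm{x_k}$; both yield the same tail estimate.
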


\begin{proof}
  By the usual Principle of Small Perturbations for basic sequences,
  $(y_k)$ is basic and is equivalent to~$(x_k)$. WLOG, $(x_k)$ is
  normalized.  Suppose that $y=\s\alpha_ky_k$. We need to show that
  $y=\tuos\alpha_ky_k$. Since $(y_k)\sim(x_k)$, the series
  $x:=\s\alpha_kx_k$ converges in norm.  Since $(x_k)$ is uo-bibasic,
  we have $x=\tuos\alpha_kx_k$. Since $(x_k)$ is normalized, the
  sequence $(\alpha_k)$ is bounded. It follows that the series
  $\s\abs{\alpha_k}\abs{x_k-y_k}$ converges. Put
  $v_n=\sum_{k=n+1}^\infty\abs{\alpha_k}\abs{x_k-y_k}$. Then
  $v_n\downarrow 0$. It follows from
  \begin{displaymath}
    \Bigabs{y-\sum_{k=1}^n\alpha_ky_k}
    \le\Bigabs{x-\sum_{k=1}^n\alpha_kx_k}
    +v_n\goesuo 0
  \end{displaymath}
  that $y=\tuos\alpha_ky_k$.
\end{proof}

\begin{question}
  Does every closed infinite dimensional subspace of a Banach lattice
  contain a uo-bibasic sequence?
\end{question}

To finish, we answer the above question in a large class of Banach lattices. 
\begin{theorem}\label{uo in subspace}
  Every closed infinite dimensional subspace of an order continuous
  Banach lattice contains an unconditional uo-bibasic sequence.
\end{theorem}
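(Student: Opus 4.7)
Let $X$ be an order continuous Banach lattice and $Y\subseteq X$ a closed infinite-dimensional subspace. My plan is to mimic the strategy used to prove \Cref{sub-L1-unc-bib}, with disjoint sequences taking the role played there by block sequences of the Haar basis: find a sequence in $Y$ that is a small perturbation of a seminormalized disjoint sequence in $X$, exploit the fact that disjoint sequences in an order continuous Banach lattice are automatically unconditional and bibasic, and finally invoke the uo-bibasic perturbation result stated just above.

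First choose a normalized basic sequence $(x_k)\subseteq Y$. By Rosenthal's $\ell_1$ theorem, either some subsequence is equivalent to the unit vector basis of $\ell_1$ or some subsequence is weakly Cauchy. In the first case, \Cref{l1-abs} gives that this subsequence is absolute; as noted after the definition of absolute, absolute $\Rightarrow$ unconditional, and \Cref{abs}\eqref{abs-obdd} combined with \Cref{decomp}\eqref{decomp-obdd} shows absolute $\Rightarrow$ bibasic, yielding the desired unconditional uo-bibasic sequence. In the second case, take alternating differences of a weakly Cauchy subsequence (seminormalized because $(x_k)$ is basic), normalize, and pass via Bessaga--Pe{\l}czy{\'n}ski to a basic subsequence; we may therefore assume $(x_k)$ is normalized, weakly null and basic in $Y$.

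Now apply a Kadec--Pe{\l}czy{\'n}ski-style disjointification in the order continuous lattice $X$ to extract a subsequence $(x_{k_n})$ and a seminormalized disjoint sequence $(d_n)\subseteq X$ with $\sum_n\|x_{k_n}-d_n\|$ small enough to invoke both the classical perturbation principle (which preserves basic-ness and unconditionality) and the uo-bibasic perturbation proposition preceding this theorem. A seminormalized disjoint sequence is automatically unconditional basic; in the order continuous setting it is also bibasic, since for any convergent $y=\s\alpha_kd_k$ disjointness gives
\begin{displaymath}
  \Bigabs{\sum_{k=1}^n\alpha_kd_k}=\sum_{k=1}^n\abs{\alpha_kd_k},
\end{displaymath}
which is increasing in $n$ and norm-bounded, hence order-converges to its supremum $u\in X_+$ by order continuity, so $\bigabs{y-\sum_{k=1}^n\alpha_kd_k}=u-\sum_{k=1}^n\abs{\alpha_kd_k}\downarrow 0$, verifying \Cref{decomp}\eqref{decomp-uconv}. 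The perturbation proposition then transfers unconditional uo-bibasicity to $(x_{k_n})$.

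The main obstacle is the disjointification step. It is not literally the case that every normalized weakly null sequence in an order continuous Banach lattice has a subsequence close to a disjoint one---the Rademacher sequence in $L_p[0,1]$ with $p\neq 2$ is weakly null but stays uniformly bounded away in $L_p$-norm from every disjoint sequence. The full Kadec--Pe{\l}czy{\'n}ski decomposition in this setting splits the sequence into a disjoint part plus a residual part order-bounded by some $u\in X_+$; when the residual dominates, one must work inside the principal ideal generated by $u$, whose AM-space structure allows \Cref{AM-basic-bib} to furnish a basic sequence which is automatically bibasic. The technical core of the argument is carefully combining these two alternatives into a single unconditional uo-bibasic sequence in $Y$.
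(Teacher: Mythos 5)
Your skeleton --- a Kadec--Pe{\l}czy{\'n}ski-type dichotomy whose disjoint branch is closed by the uo-bibasic perturbation proposition --- is the same as the paper's, and the disjoint branch is essentially correct. (Two small remarks there: a disjoint sequence satisfies the bibasis inequality with constant $1$ in \emph{any} Banach lattice, since
\begin{math}
  \bigvee_{n=1}^m\bigabs{\sum_{k=1}^n\alpha_kd_k}
  =\bigabs{\sum_{k=1}^m\alpha_kd_k}
\end{math}
by disjointness, so no order continuity is needed; and your justification ``increasing and norm-bounded, hence order convergent by order continuity'' is not valid in general --- consider $\sum_{k=1}^ne_k$ in $c_0$ --- though it is repairable because your increasing sequence is in fact norm convergent.) The genuine gap is the other branch of the dichotomy, which you correctly identify as the obstacle but do not close. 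Working inside the principal ideal $I_u$ generated by the order bound $u$ does not help: $(I_u,\norm{\cdot}_u)$ is indeed an AM-space, but its order-unit norm is not equivalent to $\norm{\cdot}_X$ on the span of your sequence, so a sequence basic in $X$ need not be basic in $(I_u,\norm{\cdot}_u)$, and \Cref{AM-basic-bib} would only yield the bibasis inequality for $\norm{\cdot}_u$, which gives no control of the left-hand side of the bibasis inequality in terms of $\norm{\cdot}_X$. Your own counterexample already defeats this route: the Rademachers in $L_p$ are order bounded by $\one$, yet
\begin{math}
  \bignorm{\sum_{k=1}^m\alpha_kr_k}_\infty=\sum_{k=1}^m\abs{\alpha_k}
\end{math}
while
\begin{math}
  \bignorm{\sum_{k=1}^m\alpha_kr_k}_p\sim\bigl(\sum_{k=1}^m\alpha_k^2\bigr)^{1/2}.
\end{math}

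The paper closes this branch with different machinery. After reducing to the separable closed sublattice generated by $Y$ (which is regular in $X$, so uo-convergence is unaffected), $X$ has a weak unit and embeds as a norm dense ideal in $L_1(\mu)$; the dichotomy of Proposition~1.c.8 in~\cite{Lindenstrauss:79} then says that either $Y$ contains an almost disjoint normalized sequence (your good case), or $\norm{\cdot}_X$ and $\norm{\cdot}_{L_1(\mu)}$ are equivalent on $Y$. In the latter case one applies \Cref{sub-L1-unc-bib} to obtain an unconditional sequence in $Y$ that is bibasic \emph{in $L_1(\mu)$} --- this rests on the Burkholder--Gundy--Davis square-function estimate of \Cref{unc-bib-L1}, together with \Cref{l1-abs} for the non-reflexive case, which subsumes your Rosenthal step --- and then transfers uo-convergence from $L_1(\mu)$ back to its ideal~$X$. (This last transfer is also the reason the conclusion is only ``uo-bibasic'' rather than ``bibasic''.) These $L_1$ ingredients are precisely the missing content of your unresolved ``technical core''.
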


\begin{proof}
  Let $Y$ be a closed infinite dimensional subspace of an order
  continuous Banach lattice~$X$; we will show that $Y$ contains an
  unconditional uo-bibasic sequence. WLOG, $Y$ is separable. Let
  $\overline{S(Y)}$ be the closed sublattice generated by $Y$
  in~$X$. It is easy to see that $\overline{S(Y)}$ is separable and
  regular in~$X$. Therefore, replacing $X$ with $\overline{S(Y)}$, we
  may assume that $X$ is separable. It follows that $X$ has a weak
  unit.  We may then continuously embed $X$ as a norm dense ideal into
  $L_1(\mu)$ for some probability measure~$\mu$; see Theorem~1.b.14
  in~\cite{Lindenstrauss:79}.  Following the proof of
  Proposition~1.c.8 in~\cite{Lindenstrauss:79}, we reduce to the
  following two cases:

  \emph{Case 1:} The norms $\norm{\cdot}_X$ and
  $\norm{\cdot}_{L_1(\mu)}$ are equivalent on~$Y$. In this case we may
  view $Y$ as a closed subspace of $L_1(\mu)$. By
  Corollary~\ref{sub-L1-unc-bib}, $Y$ contains an unconditional basic
  sequence $(y_k)$ which is bibasic in $L_1(\mu)$. It is left to show
  that $(y_k)$ is uo-bibasic in~$X$. Let $y=\s\alpha_ky_k$, where the
  series converges in norm (it does not matter in which norm because
  $\norm{\cdot}_X$ and $\norm{\cdot}_{L_1(\mu)}$ are equivalent on
  $Y$). Since $(y_k)$ is uo--bibasic in $L_1(\mu)$, we have
  $y=\tuos\alpha_ky_k$ in $L_1(\mu)$. Since $X$ is an ideal in
  $L_1(\mu)$, we conclude that $y=\tuos\alpha_ky_k$ in~$X$.

  \emph{Case 2:} There is a sequence $(y_k)$ in $Y$ and a disjoint
  sequence $(x_k)$ in $X$ such that $\norm{y_k}_X=1$ for all $k$ and
  $\norm{y_k-x_k}_X\to 0$. Being disjoint,
  $(x_k)$ is unconditional and bibasic in~$X$. Passing to a
  subsequence, if necessary, and applying the Principle of Small
  Perturbations, we conclude that $(y_k)$ is unconditional and bibasic
  and, therefore, uo-bibasic in~$X$.
\end{proof}
We do not know if every closed infinite dimensional subspace of an
order continuous Banach lattice contains a bibasic sequence. We also
don't know if such subspaces contain permutable uo-bibasic sequences,
i.e., unconditional basic sequences such that every permutation is
uo-bibasic.

\bigskip

{\bf Acknowledgements.}
The authors would like to thank Bill Johnson for valuable discussions.

\end{document}